\theoremstyle{plain} 
\newtheorem{thm}{Theorem}[section]
\newtheorem{cor}[thm]{Corollary} 
\newtheorem{lemma}[thm]{Lemma} 
\newtheorem{prop}[thm]{Proposition}
\theoremstyle{definition} 
\newtheorem{dfn}[thm]{Definition}
\newtheorem{rmk}[thm]{Remark} 
\newtheorem*{rw}{Related works}
\newtheorem*{plan}{Plan of the paper}
\newtheorem*{ack}{Acknowledgements}
\newtheorem*{rank3}{Rank 3}
\newtheorem*{rank4}{Rank 4}
\DeclareMathOperator{\C}{\mathbb{C}}
\DeclareMathOperator{\p}{\mathbb{P}}
\DeclareMathOperator{\Q}{\mathbb{Q}}		
\DeclareMathOperator{\Z}{\mathbb{Z}}
\DeclareMathOperator{\N}{\mathbb{N}}
\DeclareMathOperator{\A}{\mathcal{A}}
\DeclareMathOperator{\D}{\text{D}^b}
\DeclareMathOperator{\G}{\text{Gr}}
\DeclareMathOperator{\CG}{\text{CGr}}
\DeclareMathOperator{\ch}{\text{ch}}
\DeclareMathOperator{\td}{\text{td}}
\DeclareMathOperator{\KT}{\emph{K}(\mathcal{A}_X)_{\text{top}}}
\newcommand\blfootnote[1]{%
  \begingroup
  \renewcommand\thefootnote{}\footnote{#1}%
  \addtocounter{footnote}{-1}%
  \endgroup
}
\title{\textbf{On the double EPW sextic associated to a Gushel-Mukai fourfold}}
\author{Laura Pertusi}
\date{ }
\begin{document}
\pagestyle{plain}
\maketitle

\begin{abstract}
In analogy to the case of cubic fourfolds, we discuss the conditions under which the double cover $\tilde{Y}_A$ of the EPW sextic hypersurface associated to a Gushel-Mukai fourfold is birationally equivalent to a moduli space of (twisted) stable sheaves on a K3 surface. In particular, we prove that $\tilde{Y}_A$ is birational to the Hilbert scheme of two points on a K3 surface if and only if the Gushel-Mukai fourfold is Hodge-special with discriminant $d$ such that the negative Pell equation $\mathcal{P}_{d/2}(-1)$ is solvable in $\Z$.
\end{abstract}

\blfootnote{\textup{2010} \textit{Mathematics Subject Classification}: \textup{14J35}, \textup{14J28}, \textup{18E30}.}
\blfootnote{\textit{Key words and phrases}: Gushel-Mukai fourfolds, K3 surfaces, Derived categories, Double EPW sextics.}

\section{Introduction}

The geometry of Gushel-Mukai (GM) varieties has been recently studied by Debarre and Kuznetsov in \cite{DK1}, \cite{DK2}, and from a categorical point of view by Kuznetsov and Perry in \cite{KP}. Of particular interest is the case of GM fourfolds, which are smooth intersections of dimension four of the cone over the Grassmannian $\G(2,5)$ with a quadric hypersurface in a eight-dimensional linear space over $\C$. Indeed, these Fano fourfolds have a lot of similarities with cubic fourfolds; for instance, it is unknown if the very general GM fourfold is irrational, even if there are rational examples (see \cite[Section 4]{R}, \cite[Section 3]{P} and \cite[Section 7]{DIM}). 

In \cite{DIM} Debarre, Iliev and Manivel investigated the period map and the period domain of GM fourfolds, in analogy to the work done by Hassett for cubic fourfolds. In particular, they proved that period points of \emph{Hodge-special} GM fourfolds (see Definition \ref{defHodge-spe}) form a countable union of irreducible divisors in the period domain, depending on the discriminant of the possible labellings (see Section 2.3). It is not difficult to check that the discriminant of a Hodge-special GM fourfold is an integer $\equiv 0,2$ or $4 \pmod 8$ (see \cite{DIM}, Lemma 6.1). Furthermore, the non-special cohomology of a Hodge-special GM fourfold $X$ is Hodge isometric (up to a Tate twist) to the degree-two primitive cohomology of a polarized K3 surface if and only if the discriminant $d$ of $X$ satisfies also the following numerical condition:
\begin{equation*}\tag{$\ast\ast$}
8 \nmid d \text{ and the only odd primes which divide }d \text{ are } \equiv 1 \pmod 4.
\end{equation*} 

The first result of this paper is a generalization of the previous property to the twisted case, as done by Huybrechts for cubic fourfolds in \cite{Huy}. 
\begin{thm}
\label{thm3intro}
A GM fourfold $X$ has an associated twisted K3 surface in the cohomological sense (see Definition \ref{deftwist}) if and only if the discriminant $d$ of $X$ satisfies
\begin{equation*}\tag{$\ast\ast'$}
d=\prod_{i}p_i^{n_i} \text{ with } n_i \equiv 0 \pmod 2 \text{ for } p_i \equiv 3 \pmod 4.
\end{equation*}
\end{thm}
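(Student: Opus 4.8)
The plan is to reduce the statement to a lattice-theoretic criterion about the existence of a Brauer class on a candidate K3-type Hodge structure. The starting point is the Hodge structure on the non-special cohomology of a Hodge-special GM fourfold $X$ with discriminant $d$; call it $K_0$ (a lattice of rank $22$ with a weight-two Hodge structure of K3 type after a Tate twist). By definition (the forthcoming Definition \ref{deftwist}), $X$ has an associated twisted K3 surface if and only if there exists a K3 surface $S$, a Brauer class $\alpha \in \mathrm{Br}(S)$, and a Hodge isometry between $K_0$ (suitably twisted) and the transcendental part of the twisted Mukai lattice $\widetilde{H}(S,\alpha,\Z)$. Following Huybrechts' treatment of cubic fourfolds in \cite{Huy}, the first step is to observe that this is equivalent to the existence of a primitive embedding of $K_0(-1)$ (or the relevant twist) into the Mukai lattice $\widetilde{\Lambda} = U^{\oplus 4} \oplus E_8(-1)^{\oplus 2}$ with the right orthogonal complement — namely, the orthogonal complement should contain a vector whose divisibility encodes the order of the Brauer class.

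**The key lattice computation.** The concrete translation, again parallel to \cite{Huy}, is: $X$ has an associated twisted K3 if and only if there is a positive integer $r$ and an embedding $K_0(-1) \hookrightarrow \widetilde{\Lambda}$ whose orthogonal complement is generated by a vector $v$ with $v^2 = 0$ (or the appropriate small square) and divisibility $r$ in $\widetilde{\Lambda}$; equivalently, in terms of the discriminant form, one needs a lattice of the shape $\langle -d/r^2 \rangle$-type behaviour to glue correctly. The heart of the matter is therefore a statement about which $d$ admit such a configuration. Using Nikulin's theory of discriminant forms and the existence criteria for primitive embeddings into unimodular lattices (as in \cite{Nikulin} — though the paper will cite whatever reference it uses), this should boil down to the question: for which $d$ does there exist $r \in \Z_{>0}$ such that $-d r^{2}$ or $-d/r^{2}$ (depending on the normalization) is represented appropriately, and more precisely such that the discriminant form condition holds. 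After unwinding, the untwisted condition $(\ast\ast)$ says $-d$ is a square modulo suitable primes; the twisted condition should relax this to: for every prime $p \equiv 3 \pmod 4$, the exponent $n_i$ of $p$ in $d$ is even. This is because one is now allowed to absorb an odd power of a "bad" prime into the divisibility $r$ of the Brauer class, at the cost of squaring, so only the parity obstruction at primes $p \equiv 3 \pmod 4$ survives (since $-1$ is a square mod such $p$ precisely when the power is even), while the prime $2$ and primes $p \equiv 1 \pmod 4$ never obstruct.

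**Carrying out the two directions.** For the "if" direction, assuming $d = \prod_i p_i^{n_i}$ with $n_i$ even for all $p_i \equiv 3 \pmod 4$, I would write $d = d' \cdot m^2$ where $d'$ has no prime factor $\equiv 3 \pmod 4$ (pulling out the even powers), take $r = m$ as the intended Brauer order, note that $-d'$ then satisfies condition $(\ast\ast)$-type representability at the relevant primes (since the only obstructing primes $\equiv 3 \bmod 4$ have been removed, and $-1$ is a square mod $p$ for $p \equiv 1 \bmod 4$, and the $2$-adic condition is handled separately using $8 \nmid d'$ — which one needs to verify is automatic or can be arranged), and construct the primitive embedding with orthogonal complement of the correct divisibility by an explicit gluing. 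For the "only if" direction, suppose such an embedding with orthogonal vector $v$ of divisibility $r$ exists; then computing the discriminant form of $K_0(-1)$ via that of $\langle v \rangle \cong \langle -d \rangle$ (or $\langle v \rangle^\perp$ inside $v^\perp$), one extracts a congruence condition that forces, for each prime $p \equiv 3 \pmod 4$, the $p$-adic valuation of $d$ to be even — essentially because $-1 \notin (\Z/p)^{\times 2}$ for such $p$, so any odd power of $p$ in $d$ cannot be "solved away" even allowing the square factor $r^2$.

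**Main obstacle.** The technical crux — and the step I expect to be genuinely delicate — is the local analysis at the prime $2$. The discriminant of a GM fourfold is constrained to be $\equiv 0, 2, 4 \pmod 8$ (never $\equiv 6$, by \cite[Lemma 6.1]{DIM} as noted in the excerpt), and the interplay between this $2$-adic constraint, the $8 \nmid d$ condition appearing in the untwisted case $(\ast\ast)$, and the freedom to modify by $r^2$ needs to be handled carefully: one must check that the $2$-part never produces an obstruction in the twisted setting, so that $(\ast\ast')$ really has no condition at $p=2$. This requires a case analysis on $v_2(d) \bmod$ small powers of $2$ and matching $2$-adic discriminant forms (the forms $u_k$, $v_k$ in Nikulin's notation), which is exactly the kind of computation that is "routine" in principle but where sign and normalization errors are easy to make; I would lean heavily on the analogous computation in \cite{Huy} to guide the bookkeeping. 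A secondary subtlety is ensuring the orthogonal complement vector can be taken with $v^2 = 0$ (so that it genuinely corresponds to the Mukai vector of a twisted K3, not merely an abstract lattice) rather than some nonzero square, which again is where the precise form of Definition \ref{deftwist} and the moduli-space interpretation enter.
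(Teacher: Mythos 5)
Your proposal is a plan rather than a proof: the decisive computation is deferred at exactly the points where the content of the theorem lives. Concretely: (i) the lattice-theoretic criterion you start from is not quite right — the non-special lattice has rank $21$ (it is the orthogonal of a rank-$3$ labelling in $I_{22,2}$), not $22$, and its orthogonal complement in the rank-$24$ Mukai lattice is the rank-$3$ algebraic part containing $\langle \lambda_1,\lambda_2\rangle$, so it cannot be ``generated by a vector $v$ with $v^2=0$''; the correct criterion (Huybrechts, Lemma 2.6 and Proposition 2.8 of \cite{Huy}, which the paper invokes) is simply that $N(\A_X)$ contains a \emph{nonzero isotropic} class $\varepsilon$; (ii) the reduction of that criterion to the parity condition on primes $\equiv 3 \pmod 4$ is asserted via a heuristic about ``absorbing a bad prime into the Brauer order'', not derived; and (iii) you explicitly leave the $2$-adic analysis open while flagging it as the main obstacle. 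As written, neither direction is actually established.

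The paper's argument shows that none of this machinery is needed and that $p=2$ is a non-issue. Given an isotropic $\varepsilon \in N(\A_X)$ with $\chi(\lambda_1,\varepsilon)=x$ and $\chi(\lambda_2,\varepsilon)=y$, the Gram matrix of $\langle\lambda_1,\lambda_2,\varepsilon\rangle$ is $\bigl(\begin{smallmatrix}-2&0&x\\0&-2&y\\x&y&0\end{smallmatrix}\bigr)$, whose determinant is $2x^2+2y^2$ — twice a sum of two squares. By the classical two-squares theorem, such an integer has even exponent at every prime $\equiv 3\pmod 4$, i.e.\ satisfies $(\ast\ast')$; since the saturation $M_K$ satisfies $i^2 d = 2x^2+2y^2$, the discriminant $d$ inherits this property. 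Conversely, given a labelling of discriminant $d$ satisfying $(\ast\ast')$, one writes $i^2d=2(x^2+y^2)$ and reverses the construction to produce an isotropic algebraic class. So the arithmetic input is Fermat's characterization of sums of two squares, which is precisely why the condition singles out primes $\equiv 3\pmod 4$ and imposes nothing at $2$ or at primes $\equiv 1\pmod 4$ — the fact your local analysis was supposed to establish. If you want to salvage your approach, you would need to actually carry out the discriminant-form gluing prime by prime; but you should first correct the rank count and the statement of the criterion, and you would in any case end up reproving the two-squares theorem in disguise.
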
 

On the other hand, a general GM fourfold $X$ has an associated hyperk\"ahler variety, as cubic fourfolds have their Fano variety of lines. Indeed, $X$ determines a triple $(V_6,V_5,A)$ of Lagrangian data, where $V_6 \supset V_5$ are six and five-dimensional vector spaces, respectively, and $A \subset \bigwedge^3V_6$ is a Lagrangian subspace with respect to the symplectic structure induced by the wedge product, with no decomposable vectors (see \cite[Theorem 3.16]{DK1}). Conversely, it is possible to reconstruct an ordinary and a special GM variety from a Lagrangian data having $A$ without decomposable vectors (see \cite[Theorem 3.10 and Proposition 3.13]{DK1}). The data of $A$ determines a stratification in subschemes of the form $Y_A^{\geq 3} \subset Y_A^{\geq 2} \subset Y_A^{\geq 1} \subset \p(V_6)$, where $Y_A^{\geq 1}$ is a Eisenbud-Popescu-Walter (EPW) sextic hypersurface (see Section 2.2). Moreover, if $Y_A^{\geq 3}$ is empty, then the double cover $\tilde{Y}_A$ of the EPW sextic is a hyperk\"ahler fourfold deformation equivalent to the Hilbert scheme of length-two subschemes on a K3 surface. Actually, in order to guarantee the smoothness of $\tilde{Y}_A$, it is enough to avoid the divisor $\mathcal{D}_{10}''$ in the period domain by \cite[Remark 5.29]{DK2}.     

The second main result is the following theorem, whose analogue for cubic fourfolds was proven by Addington in \cite{Add}. Let $\lambda_1$ and $\lambda_2$ be the classes in the topological K-theory of a GM fourfold defined in \eqref{basis}. 

\begin{thm}
\label{thm1}
Let $X$ be a Hodge-special GM fourfold such that $Y_A^{\geq 3}=\emptyset$. Consider the following propositions:
\begin{enumerate}
\item[$(\emph{a})$] $X$ has discriminant $d$ satisfying $(\ast\ast)$;
\item[$(\emph{b})$] The trascendental lattice $T_X$ is Hodge isometric to the trascendental lattice $T_S(-1)$ for some K3 surface $S$, or equivalently, there is a hyperbolic plane $U=\langle \kappa_1,\kappa_2 \rangle$ primitively embedded in the algebraic part of the Mukai lattice;
\item[$(\emph{c})$] $\tilde{Y}_A$ is birational to a moduli space of stable sheaves on $S$.
\end{enumerate}
Then we have $(\emph{a})$ implies $(\emph{b})$, and $(\emph{b})$ is equivalent to $(\emph{c})$. 

Moreover, $(\emph{b})$ implies $(\emph{a})$ if either $H^{2,2}(X,\Z)$ has rank-$3$, or there is an element $\tau$ in the hyperbolic plane $U$ such that $\langle \lambda_1, \lambda_2, \tau \rangle$ has discriminant $\equiv 2$ or $4 \pmod 8$.
\end{thm}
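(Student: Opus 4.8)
\emph{Strategy.} I would follow Addington's argument for cubic fourfolds \cite{Add}, transposed to the Gushel--Mukai setting via three inputs: the criterion of Debarre--Iliev--Manivel \cite{DIM} relating $(\ast\ast)$ to the existence of an associated polarized K3 surface, together with its reformulation as the solvability of $\mathcal{P}_{d/2}(-1)$; the description in \cite{KP} of the Mukai lattice $\widetilde{H}(\mathcal{A}_X,\Z)$ and of the classes $\lambda_1,\lambda_2$, for which $\langle\lambda_1,\lambda_2\rangle^{\perp}$ is Hodge isometric, up to twist and sign, to the non-special cohomology of $X$ (so that the transcendental part of $\widetilde{H}(\mathcal{A}_X,\Z)$ is $T_X(-1)$); and the fact, due to O'Grady and to \cite{DIM}, that when $Y_A^{\geq 3}=\emptyset$ the variety $\tilde{Y}_A$ is a hyperk\"ahler fourfold of $K3^{[2]}$-type whose $H^2$, with the Beauville--Bogomolov form, has transcendental part Hodge isometric to $T_X(-1)$.

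\emph{The equivalence inside $(\emph{b})$, and $(\emph{a})\Rightarrow(\emph{b})$.} A hyperbolic plane $U\subset\widetilde{H}^{1,1}(\mathcal{A}_X,\Z)$ splits off as a unimodular summand, so $\widetilde{H}(\mathcal{A}_X,\Z)=U\oplus U^{\perp}$; the induced weight-two Hodge structure on $U^{\perp}$ has transcendental part $T_X(-1)$ of signature $(2,\ast)$, so by surjectivity of the period map for K3 surfaces there is a K3 surface $S$ with $T_S\cong T_X(-1)$, i.e.\ $T_X\cong T_S(-1)$. Conversely, from such an $S$ one has $\widetilde{H}(\mathcal{A}_X,\Z)_{\mathrm{tr}}\cong T_S=\widetilde{H}(S,\Z)_{\mathrm{tr}}$; since the ambient lattices are abstractly isometric and the primitive embedding of $T_S$ into this lattice is unique up to isometry by Nikulin's theorem (the numerical hypotheses holding because $X$ is Hodge-special, so the discriminant group of $T_S$, governed by $\mathrm{NS}(S)$, is small relative to the corank), we obtain an isometry $\widetilde{H}(\mathcal{A}_X,\Z)\cong\widetilde{H}(S,\Z)$ identifying transcendental and hence algebraic parts; as $\widetilde{H}^{1,1}(S,\Z)=\mathrm{NS}(S)\oplus U_0$ contains the hyperbolic plane $U_0$ spanned by the classes of $\mathcal{O}_S$ and of a point, so does $\widetilde{H}^{1,1}(\mathcal{A}_X,\Z)$. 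Granting this, $(\emph{a})\Rightarrow(\emph{b})$ is immediate: under $(\ast\ast)$ the criterion of \cite{DIM} gives a polarized K3 surface $S$ with the non-special cohomology of $X$ Hodge isometric, up to twist, to $H^2_{\mathrm{prim}}(S,\Z)$, so in particular $T_X\cong T_S(-1)$.

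\emph{$(\emph{b})\Leftrightarrow(\emph{c})$.} This I would run through the Torelli theorem for hyperk\"ahler manifolds. If $\tilde{Y}_A$ is birational to a moduli space $M$ of stable sheaves on a K3 surface $S$, then $M$ is of $K3^{[2]}$-type with $H^2(M,\Z)_{\mathrm{tr}}\cong T_S$ (Mukai, O'Grady, Yoshioka), and birationality induces a Hodge isometry $H^2(\tilde{Y}_A,\Z)\cong H^2(M,\Z)$, whence $T_X(-1)\cong T_S$, that is $(\emph{b})$. Conversely, from $T_X\cong T_S(-1)$ the Hodge isometry of transcendental parts extends --- by uniqueness of the relevant primitive embedding into the $K3^{[2]}$-lattice --- to a Hodge isometry $H^2(\tilde{Y}_A,\Z)\cong H^2(\mathrm{Hilb}^2(S),\Z)$, which by the Verbitsky--Markman Torelli theorem is, after composing with $-\mathrm{id}$ if necessary, a parallel-transport operator and hence induced by a birational map from $\tilde{Y}_A$ to $\mathrm{Hilb}^2(S)$; as $\mathrm{Hilb}^2(S)$ is the moduli space of sheaves on $S$ with Mukai vector $(1,0,-1)$, $(\emph{c})$ follows.

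\emph{$(\emph{b})\Rightarrow(\emph{a})$ under the extra hypotheses, and the main obstacle.} Here I must exhibit a labelling of $X$ of discriminant $d$ satisfying $(\ast\ast)$. A labelling of discriminant $d$ corresponds to a rank-three sublattice of $\widetilde{H}^{1,1}(\mathcal{A}_X,\Z)$ containing $\langle\lambda_1,\lambda_2\rangle$ whose discriminant determines $d$ up to the factor of two peculiar to the Gushel--Mukai case, and $(\ast\ast)$ for $d$ amounts to the solvability of $\mathcal{P}_{d/2}(-1)$, hence to an isotropy property of such a sublattice. If $H^{2,2}(X,\Z)$ has rank three, the labelling $K=H^{2,2}(X,\Z)$ is unique; writing $\widetilde{H}^{1,1}(\mathcal{A}_X,\Z)=U\oplus W$ with $W$ of rank three and $|\mathrm{disc}\,W|=|\mathrm{disc}\,T_X|=d$ (since $U$ and $\widetilde{H}(\mathcal{A}_X,\Z)$ are unimodular), the $p$-adic analysis of the discriminant form of $W$ combined with the congruence $d\equiv 0,2,4\pmod 8$ of \cite{DIM} forces $(\ast\ast)$. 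In general I would instead use the element $\tau\in U$ with $\mathrm{disc}\,\langle\lambda_1,\lambda_2,\tau\rangle\equiv 2,4\pmod 8$: its saturation $L$ in $\widetilde{H}^{1,1}(\mathcal{A}_X,\Z)$ is the Mukai-lattice counterpart of a labelling $K_\tau$ of some discriminant $d_\tau$, and because $\tau$ lies in a hyperbolic plane the orthogonal complement of $L$, equivalently of $K_\tau$, still carries the isotropic class forced by an associated K3, so $\mathcal{P}_{d_\tau/2}(-1)$ is solvable and $d_\tau$ satisfies $(\ast\ast)$. The delicate part is precisely this bookkeeping: tracking, across the isometry between $\langle\lambda_1,\lambda_2\rangle^{\perp}$ and the non-special cohomology of $X$, how the discriminant $d$ of a labelling in $H^4(X,\Z)$ is encoded in $\widetilde{H}(\mathcal{A}_X,\Z)$ --- in particular the factor of two absent in the cubic case --- and then deciding which hyperbolic planes impose $(\ast\ast)$. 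It is also why a hypothesis is needed when $\mathrm{rk}\,H^{2,2}(X,\Z)>3$: several labellings then compete, only those of discriminant $\equiv 2$ or $4\pmod 8$ can satisfy $(\ast\ast)$, and one of them must be singled out --- the role played by $\tau$.
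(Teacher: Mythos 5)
Your overall architecture — the Markman embedding, Addington's Proposition 4, and the lattice-theoretic characterization of Hodge-associated K3 surfaces — is the same as the paper's, and your $(a)\Rightarrow(b)$ is fine. But there is a recurring misidentification that undermines the hard direction: you repeatedly assert that $(\ast\ast)$ ``amounts to the solvability of $\mathcal{P}_{d/2}(-1)$''. It does not. The Pell condition is condition $(\ast\ast\ast)$ of Theorem \ref{thm2} and characterizes birationality to a Hilbert square; $(\ast\ast)$ only requires $8\nmid d$ and that all odd primes dividing $d$ be $\equiv 1\pmod 4$ (for instance $d=50$ satisfies $(\ast\ast)$ but not $(\ast\ast\ast)$, cf.\ Remark \ref{d=50}). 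Because of this conflation, your sketch of $(b)\Rightarrow(a)$ in the rank-four case proves nothing: the actual content — supplied in the paper's Theorem \ref{thmK3U} by analyzing the binary quadratic form $Q(x,y)=\mathrm{disc}\,\langle\lambda_1,\lambda_2,x\kappa_1+y\kappa_2\rangle=8xy+2(kx+my)^2+2(lx+ny)^2$, proving via Gaussian integers that its content $h$ has no prime factor $\equiv 3\pmod 4$ and that $8\nmid h$ (this is precisely where the hypothesis on $\tau$ enters), and showing the primitive part represents a prime $\equiv 1\pmod 4$ — is entirely absent from your proposal. In the rank-three case, your claim that ``the congruence $d\equiv 0,2,4\pmod 8$ forces $(\ast\ast)$'' is false as stated ($d=12$ is a Hodge-special discriminant failing $(\ast\ast)$); the correct argument is that $U\subset N(\A_X)$ of rank $3$ forces $N(\A_X)\cong U\oplus\Z w$ with $w^2=-d$, so the non-special lattice is isometric to the degree-$d$ K3 primitive lattice, and one then invokes \cite[Proposition 6.5]{DIM}.

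Your $(b)\Rightarrow(c)$ also contains a false step: a Hodge isometry $T_{\tilde{Y}_A}\cong T_S$ does \emph{not} extend to $H^2(\tilde{Y}_A,\Z)\cong H^2(S^{[2]},\Z)$, because primitive embeddings of $T$ into the non-unimodular $K3^{[2]}$-lattice are not unique up to isometry; and if this implication held, every $X$ satisfying $(b)$ would have $\tilde{Y}_A$ birational to a Hilbert square, contradicting Theorem \ref{thm2} (again $d=50$). The correct route, which the paper follows, is first to construct the Markman embedding $H^2(\tilde{Y}_A,\Z)\hookrightarrow K(\A_X)_{\text{top}}(-1)$ as $\langle\lambda_1\rangle^\perp$ — this is Lemma \ref{mukaivsprim}, itself a nontrivial gluing of the isometry on primitive cohomology with $\lambda_2\mapsto h_A$ via \cite[Corollary 1.5.2]{Ni} — and then to apply \cite[Proposition 4]{Add} inside the unimodular Mukai lattice, which yields birationality to a moduli space $M(v)$ for some Mukai vector $v$ of square $2$, not to $\mathrm{Hilb}^2(S)$.
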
  
\noindent In Section 3.3 we discuss a counterexample showing that the reverse implication of Theorem \ref{thm1} does not hold in full generality. More precisely, we show that there are GM fourfolds satisfying condition (b), but without a Hodge-associated K3 surface. In particular, we deduce that property (b) is not always divisorial and that there are period points of K3 type corresponding to GM fourfolds without a Hodge-associated K3 surface.

We also prove its natural extension to the twisted case, as in \cite{Huy} for cubic fourfolds.

\begin{thm}
\label{thm4}
Let $X$ be a Hodge-special GM fourfold with discriminant $d$ such that $Y_A^{\geq 3} = \emptyset$. Then $\tilde{Y}_A$ is birational to a moduli space of stable twisted sheaves on a K3 surface $S$ if and only if $d$ satisfies $(\ast\ast')$.
\end{thm}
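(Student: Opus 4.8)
The plan is to follow the strategy of Theorem \ref{thm1}, replacing everywhere the untwisted K3 surface with a twisted one, and the hyperbolic plane $U$ with a rank-two sublattice of a different (twisted) type. First I would recall from Theorem \ref{thm3intro} that $X$ has an associated twisted K3 surface in the Hodge-theoretic sense precisely when $d$ satisfies $(\ast\ast')$; this takes care of translating the numerical condition into a lattice-theoretic statement about the transcendental lattice $T_X$. Concretely, $(\ast\ast')$ is equivalent to the existence of a primitive embedding $T_X(-1) \hookrightarrow \widetilde{H}(S,\alpha,\Z)$ for some K3 surface $S$ and Brauer class $\alpha \in \mathrm{Br}(S)$, by Huybrechts' twisted analogue of the derived Torelli picture (as in \cite{Huy}); equivalently, there is a rank-two primitive sublattice of the algebraic part of the Mukai lattice of $X$ isometric to the lattice $\begin{pmatrix} 0 & r \\ r & s \end{pmatrix}$ for suitable integers $r, s$ (the twisted analogue of the hyperbolic plane $U$ appearing in Theorem \ref{thm1}(b)).

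The second ingredient, which is purely geometric and independent of the K3, is the identification of the Hodge structure of $\tilde{Y}_A$ with a piece of the Mukai lattice $K(\mathcal{A}_X)_{\mathrm{top}}$ of $X$. For a cubic fourfold Addington matched $H^2$ of the Fano variety of lines with the orthogonal complement of $\langle \lambda_1, \lambda_2 \rangle$; here the analogous statement, which I would borrow from \cite{DK2} together with the computations of \cite{KP} on the Kuznetsov component $\mathcal{A}_X$, is that $H^2(\tilde{Y}_A, \Z)$ (with its weight-two Hodge structure and Beauville--Bogomolov form) is Hodge-isometric to $\lambda_1^\perp \cap \lambda_2^\perp$ inside $K(\mathcal{A}_X)_{\mathrm{top}}$. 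Once this is in place, the argument becomes a matter of comparing lattices: $\tilde{Y}_A$ is birational to a moduli space $M_{\sigma}(S,\alpha,v)$ of $\sigma$-stable objects (equivalently, twisted sheaves with Mukai vector $v$) on the twisted K3 $(S,\alpha)$ if and only if there is a Hodge isometry between $H^2(\tilde{Y}_A,\Z)$ and $v^\perp \subset \widetilde{H}(S,\alpha,\Z)$ sending the class of the ``natural'' polarization appropriately — this is the twisted Bayer--Macr\`i / Addington-type criterion, which I would cite from \cite{Add} and its twisted counterpart in \cite{Huy}.

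The forward direction ($d$ satisfies $(\ast\ast') \Rightarrow$ birational to a twisted moduli space) then goes: from $(\ast\ast')$ get the primitive embedding of the rank-two ``twisted hyperbolic'' lattice $\begin{pmatrix} 0 & r \\ r & s \end{pmatrix}$ into the algebraic Mukai lattice of $X$; realize one of its isotropic generators as a Mukai vector $v$ on a twisted K3 $(S,\alpha)$ via Huybrechts' construction; check that $v^\perp$ is Hodge-isometric to $\lambda_1^\perp \cap \lambda_2^\perp \cong H^2(\tilde{Y}_A,\Z)$ and that the isometry can be chosen to take the polarization class to an ample (or suitably positive) class, so that the moduli space is non-empty and of the right dimension; conclude birationality. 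For the converse, a birational map $\tilde{Y}_A \dashrightarrow M_\sigma(S,\alpha,v)$ induces a Hodge isometry of the $H^2$'s, hence an isotropic vector $v \in \widetilde{H}(S,\alpha,\Z)$ with $v^\perp \cong \lambda_1^\perp \cap \lambda_2^\perp$; pulling this back to $K(\mathcal{A}_X)_{\mathrm{top}}$ produces a rank-two sublattice of the algebraic Mukai lattice containing an isotropic class, which forces $T_X(-1)$ to embed in a twisted K3 lattice, hence $d$ satisfies $(\ast\ast')$ by Theorem \ref{thm3intro}.

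The main obstacle I expect is the lattice bookkeeping in both directions: unlike the untwisted case, where the relevant sublattice is a genuine hyperbolic plane $U$ with a canonical isotropic basis, in the twisted case one must keep careful track of the integer $r$ (the ``order'' of the twist) and of divisibility/primitivity of $v$ inside $\widetilde{H}(S,\alpha,\Z)$, since a non-primitive $v$ or a wrong divisibility would give a singular moduli space or the wrong Beauville--Bogomolov lattice. A second, more technical point is ensuring that the Hodge isometry between $H^2(\tilde{Y}_A,\Z)$ and $v^\perp$ respects the orientation of the positive-definite part and sends some class to a Bridgeland-stability chamber for which the moduli space is a smooth projective hyperk\"ahler fourfold birational to $\tilde{Y}_A$ — this is exactly where one needs the hypothesis $Y_A^{\geq 3} = \emptyset$ (so that $\tilde{Y}_A$ is itself of $K3^{[2]}$-type, via \cite[Remark 5.29]{DK2}) and the twisted version of the Bayer--Macr\`i wall-crossing results.
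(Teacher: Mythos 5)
Your overall strategy --- the Markman embedding of $H^2(\tilde{Y}_A,\Z)$ into the Mukai lattice, the Torelli-type birationality criterion from \cite{Add} and \cite{Huy}, and the reduction of the numerical condition to Theorem \ref{thm3intro} --- is the same as the paper's. However, two of your concrete lattice identifications are wrong, and one of them would make a key step fail.

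First, you identify $H^2(\tilde{Y}_A,\Z)$ with $\lambda_1^{\perp}\cap\lambda_2^{\perp}=\langle\lambda_1,\lambda_2\rangle^{\perp}$. That lattice has rank $22$ and is Hodge-isometric only to the \emph{primitive} cohomology $H^2(\tilde{Y}_A,\Z)_0$ (Proposition \ref{mukaivsvan} combined with Theorem \ref{periodEPW}); the full $H^2(\tilde{Y}_A,\Z)$ has rank $23$ and is identified with $\langle\lambda_1\rangle^{\perp}$, with $\lambda_2$ mapping to the polarization class $h_A$. Extending the isometry from the primitive part to all of $H^2$ is not automatic: it is precisely the content of Lemma \ref{mukaivsprim}, which requires a gluing argument on discriminant groups via Nikulin's criteria (and a possible change of marking exchanging $\lambda_1$ and $\lambda_2$). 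Your proposal skips this step entirely, and without the rank-$23$ identification the Markman embedding and the Torelli argument cannot be applied.

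Second, in the forward direction you propose to realize one of the \emph{isotropic} generators as the Mukai vector $v$ of the moduli space and then claim $v^{\perp}\cong H^2(\tilde{Y}_A,\Z)$. If $v$ is isotropic, the moduli space $M(v)$ has dimension $(v,v)+2=2$, i.e.\ it is a surface, and $v\in v^{\perp}$, so $v^{\perp}$ is degenerate with $H^2(M(v),\Z)\cong v^{\perp}/\Z v$ of rank $22$ --- this step would fail. The isotropic algebraic class is used only to detect that $p(X)$ is of twisted K3 type, i.e.\ to produce the Hodge isometry $K(\A_X)_{\text{top}}(-1)\cong\tilde{H}(S,\alpha,\Z)$ via Theorem \ref{thm3intro}. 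The Mukai vector of the fourfold moduli space must instead be the image of $\lambda_1$, which is primitive of square $2$, so that $M(v)$ is a hyperk\"ahler fourfold with $H^2(M(v),\Z)\cong v^{\perp}\cong\langle\lambda_1\rangle^{\perp}\cong H^2(\tilde{Y}_A,\Z)$ and the isometry extends to $\tilde{\Lambda}$. With these two corrections your argument matches the paper's proof.
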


Finally, we determine the numerical condition on the discriminant $d$ of a Hodge-special GM fourfold in order to have $\tilde{Y}_A$ birational to the Hilbert scheme $S^{[2]}$ on a K3 surface $S$; this condition is stricter than that of $(\ast\ast)$, as proved in \cite{Add} for cubic fourfolds (see Remark \ref{d=50}). 
\begin{thm}
\label{thm2}
Let $X$ be a Hodge-special GM fourfold of discriminant $d$ such that $Y_A^{\geq 3}=\emptyset$. Then $\tilde{Y}_A$ is birational to the Hilbert square $S^{[2]}$ of a K3 surface $S$ if and only if $d$ satisfies the condition
\begin{equation*}\tag{$\ast$$\ast$$\ast$}
\label{ast3}
a^2d=2n^2+2 \quad \text{for }a,n \in \Z.
\end{equation*}
\end{thm}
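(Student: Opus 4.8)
The plan is to run the same Hodge-theoretic/lattice-theoretic strategy that yields Theorem \ref{thm1}, but now tracking the extra rigidity needed for $\tilde Y_A$ to be birational to a \emph{Hilbert square} rather than a general moduli space of sheaves. Recall that by Theorem \ref{thm1} the condition $(\ast\ast)$ is equivalent to the existence of a hyperbolic plane $U=\langle\kappa_1,\kappa_2\rangle$ primitively embedded in the algebraic part $\widetilde H(\mathcal A_X,\Z)_{\mathrm{alg}}$ of the Mukai lattice of the Kuznetsov component $\mathcal A_X$, and also to $\tilde Y_A$ being birational to some moduli space $M_v(S)$ of stable sheaves on a K3 surface $S$; under such a birational map the class $v$ is a primitive vector with $v^2=0$ in the Mukai lattice $\widetilde H(S,\Z)$, and $\tilde Y_A\cong M_v(S)$ is deformation equivalent to $S^{[2]}$ because $\dim\tilde Y_A=4$ forces $v^2=2$ — wait, more precisely one needs the Mukai vector $v$ with $v^2=2$. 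The point is: $\tilde Y_A$ is birational to $S^{[2]}$ \emph{for some} $S$ exactly when there is a primitive embedding of the K3 Mukai lattice $\widetilde H$ with a distinguished vector of the Hilbert-scheme type, i.e. when one can find $v\in\widetilde H(\mathcal A_X,\Z)_{\mathrm{alg}}$ with $v^2=2$ and $v$ lying in the right orbit; since $S^{[2]}$ corresponds (via Mukai) to the vector $(1,0,-1)$ (or equivalently any vector of square $2$ with divisibility $1$), the extra requirement over Theorem \ref{thm1} is precisely the existence of a primitive vector $v$ with $\langle v,v\rangle = 2$ inside $\widetilde H(\mathcal A_X,\Z)_{\mathrm{alg}}$, equivalently inside $U^{\perp}$ once $U$ is found, equivalently a hyperbolic plane is not enough — one needs a vector of square $+2$.

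First I would recall from Section 2 (and \cite{Add}, \cite{DK2}) the explicit description of $\widetilde H(\mathcal A_X,\Z)_{\mathrm{alg}}$ for a Hodge-special GM fourfold of discriminant $d$: it is a rank-$3$ lattice (in the generic Hodge-special case $H^{2,2}(X,\Z)$ of rank $3$) containing the two natural classes $\lambda_1,\lambda_2$ coming from \eqref{basis}, with Gram matrix determined by $d$. Concretely, the relevant rank-$2$ sublattice $\langle\lambda_1,\lambda_2\rangle$ has some fixed intersection form, and adding the extra algebraic class $\tau$ of the labelling produces a rank-$3$ lattice whose discriminant is $\pm d$ up to the conventions fixed earlier. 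Then, following Addington's computation for cubic fourfolds almost verbatim, the condition "$\tilde Y_A$ is birational to $S^{[2]}$" translates, via the Torelli theorem for hyperk\"ahler fourfolds of $K3^{[2]}$-type and the description of the period of $\tilde Y_A$ in terms of $A^{\perp}\subset\bigwedge^3 V_6$, into the existence of a class $w$ in this rank-$3$ lattice with $\langle w,w\rangle=2$ and divisibility $1$ (so that $w^\perp$ is the primitive cohomology of the associated K3, and $w$ plays the role of the exceptional-divisor-related class on $S^{[2]}$).

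Next I would reduce this to the stated Pell-type equation. Writing a general element of the rank-$3$ lattice in the basis $(\lambda_1,\lambda_2,\tau)$ and imposing square $2$ gives a ternary quadratic form; but the key observation — exactly as in \cite[Theorem 2]{Add} — is that one may first use $(\ast\ast)$ (which holds automatically here, since $(\ast\ast\ast)\Rightarrow(\ast\ast)$: if $a^2d=2n^2+2$ then $d/2$ is a sum $n^2+1$ of two coprime-ish squares, forcing $8\nmid d$ and every odd prime $p\mid d$ to be $\equiv 1\pmod 4$) to pass to the K3 Mukai lattice $\widetilde H(S,\Z)_{\mathrm{alg}}$, which contains the hyperbolic plane $U$ and a rank-$1$ piece $\langle 2(d/2)\rangle=\langle d\rangle$ — no: the orthogonal complement of $U$ in the rank-$3$ algebraic Mukai lattice is a rank-$1$ lattice $\langle \ell\rangle$ with $\ell$ determined by $d$. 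Requiring a square-$2$ divisibility-$1$ vector in $U\oplus\langle\ell\rangle$ becomes: find integers $x,y,z$ with $2xy+\ell z^2=2$ and $\gcd$ conditions; solving for the free parameters collapses exactly to $a^2 d = 2n^2+2$. I would present this as a short self-contained lattice computation, citing \cite{Add} and \cite{DK2} for the period identifications and for the fact that birationality of hyperk\"ahlers of $K3^{[2]}$-type is detected by Hodge isometry of the Mukai lattices together with the monodromy/orbit analysis.

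The main obstacle I anticipate is \emph{not} the Pell equation bookkeeping — that is routine once the lattice is pinned down — but rather establishing the equivalence between "$\tilde Y_A \sim_{\mathrm{bir}} S^{[2]}$" and the lattice-embedding statement with the \emph{correct divisibility and orbit constraints}. For cubic fourfolds Addington could invoke the full strength of Hassett's divisors together with Bayer–Macr\`i wall-crossing and the Torelli theorem for $K3^{[2]}$; here one must instead go through the EPW-sextic picture of \cite{DK1,DK2}, verify that the double cover $\tilde Y_A$ (smooth because $Y_A^{\geq 3}=\emptyset$ and we avoid $\mathcal D_{10}''$) has the period described by $A^{\perp}$ with the expected Mukai-lattice structure, and check that the relevant monodromy group acts transitively on primitive square-$2$ vectors of a given divisibility — so that the mere existence of such a vector suffices and no exotic obstruction survives. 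Once that structural input is in place (largely extractable from Theorem \ref{thm1}'s proof, i.e. the identification of $\widetilde H(\mathcal A_X,\Z)$ with the K3-type Mukai lattice whenever $(\ast\ast)$ holds), the remaining implication chain $(\ast\ast\ast)\Leftrightarrow\exists\,v\in\widetilde H(\mathcal A_X,\Z)_{\mathrm{alg}},\,v^2=2,\,\mathrm{div}(v)=1 \Leftrightarrow \tilde Y_A\sim_{\mathrm{bir}}S^{[2]}$ closes, and I would finish with a remark (the promised Remark \ref{d=50}, e.g. $d=50$) exhibiting a discriminant satisfying $(\ast\ast)$ but not $(\ast\ast\ast)$, showing the condition is genuinely stricter.
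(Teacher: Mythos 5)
Your overall skeleton --- identify $H^2(\tilde Y_A,\Z)$ inside the Mukai lattice of $\A_X$ via a Markman embedding, invoke Addington's criterion, and reduce to a Pell-type equation --- is the same as the paper's, which proves Lemma \ref{mukaivsprim} precisely in order to apply \cite[Propositions 4 and 5]{Add}. But the lattice criterion you propose is wrong. Birationality of $\tilde Y_A$ to a Hilbert square is \emph{not} equivalent to the existence of a primitive square-$2$ vector of divisibility $1$ in $N(\A_X)$: once $(\ast\ast)$ holds, the saturated labelling is isomorphic to $U\oplus\Z w$ with $w^2=-d$, and $u+v$ (for $u,v$ the standard basis of $U$) is already a primitive square-$2$ class of divisibility $1$ for \emph{every} such $d$. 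Your criterion would therefore collapse Theorem \ref{thm2} into Theorem \ref{thm1}, contradicting the example $d=50$ that you yourself cite at the end. The correct criterion, which is what \cite[Proposition 5]{Add} actually says, concerns the \emph{fixed} distinguished class $\lambda_1$ generating the orthogonal complement of the embedded $H^2(\tilde Y_A,\Z)$: one needs an \emph{isotropic} class $w\in N(\A_X)$ with $\chi(\lambda_1,w)=1$, i.e.\ a sublattice $\langle\lambda_1,\lambda_2,w\rangle$ with Gram matrix $\left(\begin{smallmatrix}-2&0&1\\0&-2&n\\1&n&0\end{smallmatrix}\right)$. Its determinant $2n^2+2$ equals $a^2d$ where $a$ is the index in the saturation; that is where $(\ast\ast\ast)$ comes from.

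The second gap is that the converse direction --- producing such an isotropic $w$ with integral coordinates inside $N(\A_X)$ from a solution of $a^2d=2n^2+2$ --- is the actual content of the proof, and "routine bookkeeping" does not cover it. A priori one only knows that $N(\A_X)$ contains a primitive rank-$3$ labelling of discriminant $d$ with some Gram matrix $\left(\begin{smallmatrix}-2&0&a\\0&-2&b\\a&b&c\end{smallmatrix}\right)$. The paper first normalizes this matrix (Lemma \ref{lemmaformaeulfacile}, with separate cases $d\equiv 2$ and $d\equiv 4\pmod 8$), then shows $a\equiv 1\pmod 4$ (because every odd prime dividing $n^2+1$ is $\equiv 1\pmod 4$) and determines the parity of $n$ in each congruence class of $d$, and only then can write down $w=\frac{a-1}{2}\lambda_1+\frac{n}{2}\lambda_2+a\tau$ (resp.\ $\frac{n}{2}\lambda_1+\frac{a-1}{2}\lambda_2+a\tau$ or $\frac{a-1}{2}\lambda_1+\frac{a-n}{2}\lambda_2+a\tau$), whose coefficients are integral exactly because of those congruences; one case even requires switching to the second Markman embedding of Remark \ref{Memb2}. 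Your concern about monodromy orbits, on the other hand, is already absorbed into \cite[Proposition 5]{Add} once Lemma \ref{mukaivsprim} supplies the Markman embedding, so that part needs no extra work.
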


The strategy to prove these results relies on the definition of the Mukai lattice for the \emph{Kuznetsov component} (or the K3 category), which is a non commutative K3 surface arising from the semiorthogonal decomposition of the derived category of a GM variety constructed in \cite{KP} (see Section 2.4). The Mukai lattice is defined as done in \cite{AT} by Addington and Thomas for cubic fourfolds; actually, we can prove the analogue of their results, using the vanishing lattice of a GM fourfold instead of the primitive degree-four lattice of cubic fourfolds. In particular, following the work of Addington, this allows us to apply Propositions 4 and 5 of \cite{Add} and, then, to prove Theorems \ref{thm1} and  \ref{thm2}. On the other hand, we obtain that if a general GM fourfold has a homological associated K3 surface, then there is a Hodge-theoretic associated K3 surface (see Theorem \ref{thmHomHodge} for a more precise statement). 

\begin{rw}
In \cite[Proposition 2.1]{IM}, Iliev and Madonna prove that if a smooth double EPW sextic is birational to the Hilbert square $S^{[2]}$ on a K3 surface $S$ with polarization of degree-$d$, then the negative Pell equation $\mathcal{P}_{d/2}(-1): n^2-\frac{d}{2}a^2=-1$ is solvable. Thus Theorem \ref{thm2} is consistent with this necessary condition (see also Remark \ref{linkIM}).

Finally, in \cite[Corollary 7.6]{DM}, Debarre and Macrì prove that the Hilbert square of a general polarized K3 surface of degree-$d$ is isomorphic to a double EPW sextic if and only if the Pell equation $\mathcal{P}_{d/2}(-1)$ is solvable and the equation $\mathcal{P}_{2d}(5): n^2-(2d)a^2=5$ is not. By Theorem \ref{thm2}, we see that the birationality to this Hilbert scheme is obtained by relaxing the second condition on $\mathcal{P}_{2d}(5)$.
\end{rw} 

\begin{plan}
In Section 2 we recall some preliminary facts about Hodge-special GM fourfolds and their Kuznetsov component. In Section 3.1 we define the Mukai lattice and we reinterpret the definition of Hodge-special GM fourfolds with a certain discriminant via their Mukai lattice. In Section 3.2 we characterize the condition of having a Hodge-associated K3 surface by the existence of a primitively embedded hyperbolic lattice in the algebraic part of the Mukai lattice, for general Hodge-special GM fourfolds and for non general GM fourfolds satisfying a precise condition. Then, we prove Theorem \ref{thm3intro}. In Section 3.3 we discuss the construction of GM fourfolds which do not realize the equivalence of Theorem \ref{thm1}. Section 4.1 is devoted to the proofs of Theorem \ref{thm1} and Theorem \ref{thm4}. In Section 4.2 we prove Theorem \ref{thm2}. 
\end{plan}

\begin{ack}
I would like to thank Nicolas Addington for his useful comments. I am grateful to Chiara Camere, Alice Garbagnati, Alexander Kuznetsov, Simone Panozzo and Alex Perry for interesting suggestions. 
It is a pleasure to thank Emanuele Macrì for inspiring conversations, and my advisor Paolo Stellari for helping me throughout the preparation of this work. I thank the referees for expert suggestions and for careful reading of this paper. 

The author would like to acknowledge the National Group of Algebraic and Geometric Structures and their Applications (GNSAGA-INdAM) for financial support, and Institut Henri Poincaré, where this paper was completed. 
\end{ack}

\section{Background on Gushel-Mukai fourfolds}

The aim of this section is to recall some definitions and properties concerning Hodge-special GM fourfolds and to fix the notation. Our main references are \cite{DIM}, \cite{DK1} \cite{DK2} and \cite{KP}. 

\subsection{Geometry of GM fourfolds}
Let $V_5$ be a $5$-dimensional complex vector space; we denote by $\G(2,V_5)$ the Grassmannian of $2$-dimensional subspaces of $V_5$, viewed in $\p(\bigwedge^2 V_5) \cong \p^9$ via the Pl\"ucker embeddig. Let $\CG(2,V_5) \subset \p(\C \oplus \bigwedge^2 V_5) \cong \p^{10}$ be the cone over $\G(2,V_5)$ of vertex $\nu:=\p(\C)$. 

\begin{dfn}
A \textbf{GM fourfold} is a smooth intersection of dimension four
$$X=\CG(2,V_5) \cap \p(W) \cap Q,$$
where $W$ is a $9$-dimensional vector subspace of $\C \oplus \bigwedge^2 V_5$ and $Q$ is a quadric hypersurface in $\p(W)\cong \p^{8}$. 
\end{dfn}

\noindent Notice that $\nu$ does not belong to $X$, because $X$ is smooth. Thus, the linear projection from $\nu$ defines a regular map
$$\gamma_X: X \rightarrow \G(2,V_5)$$
called the \textbf{Gushel map}. We denote by $\mathcal{U}_X$ the pullback via $\gamma_X$ of the tautological rank-$2$ subbundle of $\G(2,V_5)$. We can distinguish two cases:
\begin{itemize}
\item If the linear space $\p(W)$ does not contain $\nu$, then $X$ is isomorphic to a quadric section of the linear section of the Grassmannian $\G(2,V_5)$ given by the projection of $\p(W)$ to $\p(\bigwedge^2 V_5)$. GM fourfolds of this form are \textbf{ordinary}.
\item If $\nu$ is in $\p(W)$, the linear space $\p(W)$ is a cone over $\p(W/\C) \cong \p^7 \subset \p(\bigwedge^2 V_5)$. Then, $X$ is a double cover of the linear section $\p(W/\C) \cap \G(2,V_5)$, branched along its intersection with a quadric. In this case, we say that $X$ is \textbf{special}. 
\end{itemize} 

We denote by $\sigma_{i,j} \in H^{2(i+j)}(\G(2,V_5),\Z)$ the Schubert cycles on $\G(2,V_5)$ for every $3 \geq i \geq j \geq 0$ and we set $\sigma_i:=\sigma_{i,0}$. The restriction $h:=\gamma_X^*\sigma_1$ of the hyperplane class on $\p(\C \oplus \bigwedge^2 V_5)$ defines a natural polarization of degree-$10$ on $X$. An easy computation using adjunction formula shows that $X$ is a Fano fourfold with canonical class $-2h$. 

The moduli stack $\mathcal{M}_4$ of GM fourfolds is a Deligne-Mumford stack of finite type over $\C$, of dimension $24$ (see \cite[Proposition A.2]{KP}). 

\subsection{Period map and period points}
We recall the Hodge numbers of $X$: 
  \[
\begin{tabular}{ccccccccccccccc}
&&&&&&&1\\
&&&&&&0&&0&\\
&&&&&0&&1&&0\\
&&&&0&&0&&0&&0\\
&&&0&&1&&22&&1&&0
\end{tabular}
\]
(see \cite[Lemma 4.1]{IMani}). Notice that $H^{\bullet}(X,\Z)$ is torsion free by \cite[Proposition 3.4]{DK2}. The classes $h^2$ and $\gamma_X^*\sigma_{2}$ span the embedding of the rank-two lattice $H^4(\G(2,V_5),\Z)$ in $H^4(X,\Z)$. The \textbf{vanishing lattice} of $X$ is the sublattice
$$H^4(X,\Z)_{00}:=\left\lbrace x \in H^4(X,\Z): x \cdot \gamma_X^{*}(H^4(\G(2,V_5),\Z))=0 \right\rbrace.$$
By \cite[Proposition 5.1]{DIM}, we have an isomorphism of lattices
$$H^4(X,\Z)_{00} \cong E_8^2 \oplus U^2 \oplus I_{2,0}(2)=:\Lambda.$$  
Here $U$ is the hyperbolic plane $(\Z^2,\begin{pmatrix}
0 & 1 \\ 
1 & 0
\end{pmatrix})$, $E_8$ is the unique even unimodular lattice of signature $(8,0)$ and $I_{r,s}:=I_1^r \oplus I_1(-1)^s$, where $I_1$ is the lattice $\Z$ with bilinear form $(1)$ (given a lattice $L$ and a non-zero integer $m$, we denote by $L(m)$ the lattice $L$ with the intersection form multiplied by $m$). We point out that the lattice $I_{2,0}(2)$ is also denoted by $A_1^2$ in the literature.  

Let $e$ and $f$ be two classes in $I_{22,2}$ of square $2$ and $e \cdot f=0$, which generate the orthogonal of $\Lambda$ in $I_{22,2}$. The choice of an isometry $\phi: H^4(X,\Z)\cong I_{22,2}$ sending $\gamma_X^*\sigma_{1,1}$ and $\gamma_X^*(\sigma_2-\sigma_{1,1})$ to $e$ and $f$ respectively, and such that $\phi(H^4(X,\Z)_{00}) = \Lambda$, determines a \textbf{marking} for $X$. Notice that the Hodge structure on the vanishing lattice is of K3 type. Let $\text{\~{O}}(\Lambda)$ be the subgroup of automorphisms of $\text{O}(\Lambda)$ acting trivially on the discriminant group $d(\Lambda)$. The groups $\text{\~{O}}(\Lambda)$ and $\text{O}(\Lambda)$ act properly discontinuously on the complex variety
\begin{equation}
\label{locperiodom}
\Omega:= \lbrace w \in \p(\Lambda \otimes \C): w \cdot w =0, w \cdot \bar{w}<0 \rbrace.
\end{equation}
The \textbf{global period domain} is the quotient $\mathcal{D}:=\text{\~{O}}(\Lambda) \setminus \Omega$, which is an irreducible quasi-projective variety of dimension $20$. We observe that two markings differ by the action of an element in $\text{\~{O}}(\Lambda)$. It follows that the \textbf{period map} $p: \mathcal{M}_4 \rightarrow \mathcal{D}$, which sends $X$ to the class of the one-dimensional subspace $H^{3,1}(X)$, is well-defined. As a map of stacks, $p$ is dominant with $4$-dimensional smooth fibers (see \cite[Theorem 4.4]{DIM}). The \textbf{period point} of $X$ is the image $p(X)$ in $\mathcal{D}$.

As proved in \cite{DK2}, the period point of a general GM fourfold is determined by that of its associated double EPW sextic. More precisely, let $(V_6,V_5,A)$ be the Lagrangian data of $X$, as defined in the introduction. By the work of O'Grady, we can consider the closed subschemes 
\[
Y_A^{\geq l}:=\lbrace [U_1] \in \p(V_6): \text{dim}(A \cap (U_1 \wedge \bigwedge\nolimits^2 V_6)) \geq l \rbrace \quad \text{for } l \geq 0.
\]
Since $A$ has no decomposable vectors, we have $Y_A:=Y_A^{\geq 1}$ is a normal sextic hypersurface, called EPW sextic, which is singular along the integral surface $Y_A^{\geq 2}$. Moreover, $Y_A^{\geq 3}$ is finite and it is the singular locus of $Y_A^{\geq 2}$, while $Y_A^{\geq 4}$ is empty (see \cite[Proposition B.2]{DK1}). Let $\tilde{Y}_A$ be the double cover of the EPW sextic $Y_A$ branched over $Y_A^{\geq 2}$. If $Y_A^{\geq 3}$ is empty, (e.g.\ for generic $A$), then the \textbf{double EPW sextic} $\tilde{Y}_A$ is a smooth hyperk\"ahler fourfold, deformation equivalent to the Hilbert scheme of two points on a K3 surface (see \cite[Theorem 1.1]{OG2}). In this case, the period point of $\tilde{Y}_A$ coincides with $p(X)$, as explained in the following result.   
 
\begin{thm}\emph{\cite[Theorem 5.1]{DK2}}
\label{periodEPW}
Let $X$ be a GM fourfold with associated Lagrangian data $(V_6,V_5,A)$. Assume that the double EPW sextic $\tilde{Y}_A$ is smooth (i.e. $Y_A^{\geq 3} = \emptyset$). Then, there is an isometry of Hodge structures 
$$H^4(X,\Z)_{00} \cong H^2(\tilde{Y}_A,\Z)_0(-1),$$ 
where $H^2(\tilde{Y}_A,\Z)_0$ is the degree-two primitive cohomology of $\tilde{Y}_A$ equipped with the Beauville-Bogomolov-Fujiki form $q$.
\end{thm}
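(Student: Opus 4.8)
The plan is to realize both $X$ and $\tilde{Y}_A$ as outputs of the single Lagrangian datum $A$ and then to identify the two variations of Hodge structure they produce. Since the double EPW sextic $\tilde{Y}_A$ depends only on $A$, whereas a choice of $V_5 \subset V_6$ (together with the ordinary/special alternative) is what pins down the GM fourfold, the period map $p$ of GM fourfolds factors through the assignment $X \mapsto A$; its $4$-dimensional fibers \cite[Theorem 4.4]{DIM} correspond exactly to the extra data collapsed by this factorization. I would therefore work over the irreducible quasi-projective parameter space $\mathcal{L}$ of Lagrangians $A \subset \bigwedge^3 V_6$ with no decomposable vectors and $Y_A^{\geq 3} = \emptyset$, over which \cite[Theorem 3.16]{DK1} and O'Grady's construction \cite{OG2} give a family of GM fourfolds $\mathcal{X} \to \mathcal{L}$ and a family of smooth double EPW sextics $\tilde{\mathcal{Y}} \to \mathcal{L}$. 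Passing to the vanishing cohomology of the first family and to the BBF-polarized primitive cohomology of the second, I obtain two polarized VHS of K3 type, each of signature $(20,2)$ after the Tate twist and each fiberwise isometric to $\Lambda = E_8^2 \oplus U^2 \oplus I_{2,0}(2)$; the theorem is the statement that, with compatible markings, these two VHS agree.

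Fixing a base point $A_0$ and markings of $X_{A_0}$ and $\tilde{Y}_{A_0}$ produces an abstract isometry $\psi_0$ of the two fibers over $A_0$ together with two holomorphic period maps $p^{\mathrm{GM}}, p^{\mathrm{EPW}} : \mathcal{L} \to \mathcal{D} = \tilde{O}(\Lambda)\backslash \Omega$. The crux is to prove $p^{\mathrm{GM}} = p^{\mathrm{EPW}}$. I would attack this infinitesimally: by local Torelli for GM fourfolds and by the Beauville--Bogomolov local Torelli for the hyperk\"ahler fourfold $\tilde{Y}_A$, the differential of each period map is computed by cup product against the Kodaira--Spencer class, landing in $\Hom(H^{3,1}, H^{2,2}_{00})$ and in $\Hom(H^{2,0}, H^{1,1}_0)$ respectively, and both are pulled back from the common deformation space $T_{A_0}\mathcal{L}$ of the Lagrangian. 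Checking that $\psi_0$ intertwines these two rank-$20$ infinitesimal VHS data at $A_0$ shows the two period maps have equal differential there; since $\mathcal{L}$ is connected and both maps are holomorphic with the same value and derivative at $A_0$, they coincide on all of $\mathcal{L}$, giving the fiberwise Hodge isometry.

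To upgrade this to an isometry defined over $\Z$ (rather than a rational, or discriminant-group-twisted, identification) I would in parallel seek an explicit algebraic correspondence between $X_A$ and $\tilde{Y}_A$ inducing $\psi$ on integral cohomology. The natural candidate uses the covering involution of $\tilde{Y}_A \to Y_A \subset \p(V_6)$ and the incidence geometry relating $\p(V_6)$ to $X$ through the Gushel map and the tautological bundle $\mathcal{U}_X$; computing the cohomological action of this cycle on the transcendental lattices, and checking it is an isometry for the vanishing pairing and the BBF form, would pin down the integral structure and confirm that the marking lands in $\Lambda$ as required.

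The main obstacle is precisely this canonical matching of markings. It is cheap to know that $\Lambda$ and $H^2(\tilde{Y}_A,\Z)_0(-1)$ are abstractly isometric; the work is to produce an isometry that is simultaneously flat for both VHS and integral, so that it descends to the common period domain $\mathcal{D}$ built from $\tilde{O}(\Lambda)$ rather than $O(\Lambda)$. Concretely, one must rule out a discrepancy by an element of $O(\Lambda)\setminus \tilde{O}(\Lambda)$, i.e.\ a nontrivial twist on the discriminant group $d(\Lambda)$, and control the monodromy of both families so that the comparison isometry is globally well defined; this, rather than the local analytic matching, is where the real difficulty lies.
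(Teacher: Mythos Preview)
The paper does not prove this theorem: it is quoted verbatim from \cite[Theorem 5.1]{DK2} and used as a black box throughout (see the sentence immediately preceding the statement and the absence of any proof environment after it). So there is no ``paper's own proof'' to compare your proposal against; the comparison you were asked to make is vacuous here.

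That said, a few remarks on your sketch as a standalone argument. The step ``same value and derivative at $A_0$, hence equal on all of $\mathcal{L}$'' is not valid: two holomorphic maps agreeing to first order at one point need not agree globally, even on a connected base. You would need either (i) equality of the full germ at $A_0$, or (ii) an argument that the two period maps agree on a Zariski-dense or analytically open set, or (iii) a direct identification of the two VHS by a global correspondence, which is closer to what you propose in your third paragraph. The actual argument in \cite{DK2} proceeds via an explicit geometric comparison (relating the Hodge structure of $X$ to that of $\tilde{Y}_A$ through intermediate varieties associated to the Lagrangian data), rather than by an infinitesimal period-map matching; this is precisely because the integral and monodromy issues you flag in your last paragraph are hard to settle by soft deformation-theoretic means. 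Your instinct that the real content lies in producing a canonical integral isometry compatible with both local systems is correct, and that is where the work in \cite{DK2} is concentrated.
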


\subsection{Hodge-special GM fourfolds}

\begin{dfn}
\label{defHodge-spe}
A GM fourfold $X$ is \textbf{Hodge-special} if  $H^{2,2}(X) \cap H^4(X,\Q)_{00} \neq 0$.
\end{dfn}

\noindent Equivalently, $X$ is Hodge-special if and only if $H^{2,2}(X,\Z)$ contains a rank-three primitive sublattice $K$ containing $\gamma_X^*(H^4(\G(2,V_5),\Z))$. Such a lattice $K$ is a \textbf{labelling} for $X$ and the discriminant of the labelling is the determinant of the intersection matrix on $K$. We say that $X$ \emph{has discriminant $d$} if it has a labelling of discriminant $d$. 

Notice that $d$ is positive and $d \equiv 0,2$ or $4(\text{mod\,8})$ (see \cite[Lemma 6.1]{DIM}). More precisely, the period point of a Hodge-special GM fourfold with discriminant $d$ belongs to an irreducible divisor $\mathcal{D}_d$ in $\mathcal{D}$ if $d \equiv 0 \pmod 4$, or to the union of two irreducible divisors $\mathcal{D}_d'$ and $\mathcal{D}_d''$ in $\mathcal{D}$ if $d \equiv 2 \pmod 8$ (see \cite[Corollary 6.3]{DIM}). The hypersurfaces $\mathcal{D}_d'$ and $\mathcal{D}_d''$ are interchanged by the involution $r_{\mathcal{D}}$, defined on $\mathcal{D}$ by exchanging $e$ and $f$.

Let $X$ be a Hodge-special GM fourfold with a labelling $K$ of discriminant $d$. The orthogonal $K^{\perp}$ of $K$ in $I_{22,2}$ is the \textbf{non-special lattice} of $X$; it is equipped with a Hodge structure induced by the Hodge structure on $H^4(X,\Z)$. A pseudo-polarized K3 surface $S$ of degree-$d$ is \textbf{Hodge-associated} to $(X,K)$ if the non-special cohomology $K^{\perp}$ is Hodge-isometric to the primitive cohomology lattice $H^2(S,\Z(-1))_0$. As proved in \cite[Proposition 6.5]{DIM}, this is equivalent to have $d$ satisfying $(\ast\ast)$. Moreover, if $p(X)$ is not in $\mathcal{D}_8$, then the pseudo-polarization is a polarization (see \cite[Proposition 6.5]{DIM}).  

\subsection{Kuznetsov component and K-theory}

The analogy between GM fourfolds and cubic fourfold is also reflected on their derived categories. Indeed, we denote by $\D(X)$ the bounded derived category of coherent sheaves on a GM fourfold $X$. By \cite[Proposition 4.2]{KP}, there exists a semiorthogonal decomposition of the form
$$\D(X)= \langle \A_X, \mathcal{O}_X, \mathcal{U}_X^*, \mathcal{O}_X(1),\mathcal{U}_X^*(1) \rangle,$$
where $\mathcal{A}_X$ is the right orthogonal to the subcategory generated by the exceptional objects
\begin{equation}
\label{excol}
\mathcal{O}_X,\mathcal{U}_X^*,\mathcal{O}_X(1),\mathcal{U}_X^*(1),
\end{equation}
in $\D(X)$. We say that $\A_X$ is the \textbf{Kuznetsov component} (or the \textbf{K3 category}) associated to $X$. The Kuznetsov component has the same Serre functor and the same Hochschild homology as the derived category of a K3 surface (see \cite[Proposition 4.5 and Lemma 5.3]{KP}). In particular, the Kuznetsov component can be viewed as a non commutative K3 surface. Moreover, if $X$ is an ordinary GM fourfold containing a quintic del Pezzo surface, then there exists a K3 surface $S$ and an equivalence $\A_X \xrightarrow{\sim} \D(S)$ (see \cite[Theorem 1.2]{KP}). 

We denote by $K_0(\A_X)$ the Grothendieck group of $\A_X$ and let $\chi$ be the Euler pairing. The numerical Grothendieck group of $\A_X$ is given by the quotient $K_0(\A_X)_{\text{num}}:=K_0(\A_X)/\ker\chi$. By the additivity with respect to semiorthogonal decompositions, we have the orthogonal direct sum 
$$K_0(X)_{\text{num}}=K_0(\A_X)_{\text{num}} \oplus \langle [\mathcal{O}_X],[\mathcal{U}_X^*],[\mathcal{O}_X(1)],[\mathcal{U}_X^*(1)] \rangle_{\text{num}} \cong K_0(\A_X)_{\text{num}} \oplus \Z^{4}$$ 
with respect to $\chi$. In particular, since the Hodge conjecture holds for $X$ (see \cite{CM}), it follows that
$$\text{rank}(K_0(\A_X)_{\text{num}})=\sum_k h^{k,k}(X,\Q) - 4=4+h^{2,2}(X,\Q)-4= h^{2,2}(X,\Q).$$ 
We recall the following lemma, which will be useful to study the relation between the Mukai lattice of $\A_X$ and the vanishing cohomology of $X$.
\begin{lemma}\emph{\cite[Lemma 5.14 and Lemma 5.16]{KP}}
\label{lemmabase}
If $X$ is a non Hodge-special GM fourfold, then $K_0(\A_X)_{\emph{num}} \cong \Z^2$ and it admits a basis such that the Euler form with respect to this basis is given by
\[
\begin{pmatrix}
 -2 & 0 \\ 
 0 & -2
 \end{pmatrix}.
\]  
\end{lemma}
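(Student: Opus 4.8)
The rank part is essentially contained in the discussion preceding the lemma. Indeed $\operatorname{rank}K_0(\A_X)_{\mathrm{num}}=h^{2,2}(X,\Q)$, and for a non Hodge-special $X$ one has $H^{2,2}(X)\cap H^4(X,\Q)_{00}=0$ by definition, so $H^{2,2}(X)\cap H^4(X,\Q)=\gamma_X^*H^4(\G(2,V_5),\Q)$ is $2$-dimensional and hence $h^{2,2}(X,\Q)=2$. The group $K_0(\A_X)_{\mathrm{num}}$ is torsion-free (if $nx$ lies in $\ker\chi$ then so does $x$) and finitely generated (it is a direct summand of the lattice $K_0(X)_{\mathrm{num}}$), so it is isomorphic to $\Z^2$.

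It remains to identify the Euler form. Recall from the discussion above that, with respect to $\chi$,
\[K_0(X)_{\mathrm{num}}=K_0(\A_X)_{\mathrm{num}}\oplus L,\qquad L:=\langle[\mathcal O_X],[\mathcal U_X^*],[\mathcal O_X(1)],[\mathcal U_X^*(1)]\rangle,\]
and that, in the ordered basis above, $\chi$ has upper unitriangular Gram matrix on $L$, in particular of determinant $1$; moreover $\chi$ restricted to $K_0(\A_X)_{\mathrm{num}}$ is symmetric and even, the Serre functor of $\A_X$ being the shift by $2$. Thus $K_0(\A_X)_{\mathrm{num}}$, equipped with $\chi$, is an even symmetric lattice of rank $2$, and to pin it down up to isometry it suffices to exhibit inside it an explicit finite-index sublattice with the right invariants.

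To this end, let $\operatorname{pr}\colon\D(X)\to\A_X$ be the projection functor onto the Kuznetsov component (it exists since $L$ is generated by an exceptional collection, hence admissible), and set $E_1:=\operatorname{pr}(G_1)$, $E_2:=\operatorname{pr}(G_2)$ for two suitably chosen natural sheaves $G_1,G_2$ on $X$ — for instance the structure sheaf of a point and a twist of $\mathcal U_X$ (or structure sheaves of small linear sections). Their classes lie in $K_0(\A_X)_{\mathrm{num}}$, and the three pairings $\chi(E_1,E_1)$, $\chi(E_2,E_2)$, $\chi(E_1,E_2)=\chi(E_2,E_1)$ are computed by Hirzebruch--Riemann--Roch, $\chi(E,F)=\int_X\ch(E)^{\vee}\ch(F)\td(X)$, once one projects each $[G_i]$ onto $L$ by solving the linear system with matrix $[\chi(E_j,E_i)]$ and right-hand side the values $\chi([\mathcal O_X],[G_i]),\chi([\mathcal U_X^*],[G_i]),\dots$. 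All the Chern data required are standard: $\ch(\mathcal O_X(j))=e^{jh}$, $c_1(\mathcal U_X^*)=h$, $c_2(\mathcal U_X^*)=\gamma_X^*\sigma_{1,1}$; the intersection numbers on $X$ follow from Schubert calculus on $\G(2,V_5)$ together with the description of $X$ as a quadric section of a codimension-$4$ linear section of $\CG(2,V_5)$; and $\td(X)$ is determined by $c_1(X)=2h$ and the known Chern numbers of a GM fourfold (\cite{DK2}, \cite{IMani}). The outcome of this computation is that the $2\times2$ Gram matrix of $\chi$ on $\langle E_1,E_2\rangle$ is even, negative definite, of determinant $4$.

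The conclusion is then formal. An even, negative-definite binary form of determinant $4$ is, by reduction of binary quadratic forms, isometric to $\langle-2\rangle\oplus\langle-2\rangle$; hence $\langle E_1,E_2\rangle\cong\langle-2\rangle\oplus\langle-2\rangle$, and it sits inside $K_0(\A_X)_{\mathrm{num}}$ with some finite index $n$ satisfying $n^2\mid4$. If $n=2$ then $K_0(\A_X)_{\mathrm{num}}$ would be an even, negative-definite, unimodular lattice of rank $2$, which does not exist; so $n=1$ and the Euler form on $K_0(\A_X)_{\mathrm{num}}$ is $\operatorname{diag}(-2,-2)$, as claimed. The only delicate point is the middle step: choosing objects $G_1,G_2$ whose projected classes generate $K_0(\A_X)_{\mathrm{num}}$ up to finite index and whose Euler pairing has the stated invariants; once this is arranged, the remainder is routine Riemann--Roch bookkeeping with Schubert cycles and the Todd class.
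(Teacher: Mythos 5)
There is a genuine gap: the heart of your argument is never carried out. You leave $G_1,G_2$ unspecified (``suitably chosen natural sheaves''), you do not verify that their projections into $\A_X$ are numerically independent (so that they really span a finite-index sublattice of $K_0(\A_X)_{\mathrm{num}}$), and the claim that their Gram matrix is even, negative definite of determinant $4$ is asserted rather than computed; you yourself flag this as ``the only delicate point,'' but it is precisely the content of the lemma. Moreover, the concluding lattice-theoretic step silently relies on the \emph{full} lattice $K_0(\A_X)_{\mathrm{num}}$ being even: without that, an index-$2$ overlattice of $\langle-2\rangle\oplus\langle-2\rangle$ of determinant $1$ could be the odd lattice $\langle-1\rangle\oplus\langle-1\rangle$ (which does contain $\langle-2\rangle\oplus\langle-2\rangle$ with index $2$, via $(1,1),(1,-1)$), so the determinant argument alone does not pin down the form. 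Your justification of evenness --- ``the Serre functor of $\A_X$ being the shift by $2$'' --- only yields symmetry of $\chi$, not evenness: Serre duality pairs $\mathrm{ext}^i$ with $\mathrm{ext}^{2-i}$ and leaves $\mathrm{ext}^1$ unpaired, so $\chi(E,E)$ being even is not formal and needs a separate argument (e.g.\ the embedding into the even Mukai lattice, which in this paper is only set up later).

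The paper (following Kuznetsov--Perry) avoids all of this by exploiting the non Hodge-special hypothesis more strongly: since $h^{2,2}(X,\Q)=2$, the classes $[\mathcal O_X],[\mathcal O_H],[\mathcal O_S],[\mathcal O_\Sigma],[\mathcal O_L],[\mathcal O_P]$ form a basis of $K_0(X)_{\mathrm{num}}$, and $K_0(\A_X)_{\mathrm{num}}$ is cut out \emph{exactly} inside it as the right orthogonal of the four exceptional classes; solving that linear system by Riemann--Roch produces the explicit basis $\lambda_1,\lambda_2$ of \eqref{basis} and a direct computation of the Gram matrix $\operatorname{diag}(-2,-2)$. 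This gives an honest basis, not a finite-index sublattice, so no classification of binary forms, no index bound, and no evenness of the ambient lattice is needed. Your rank argument agrees with the paper's; to repair the rest you should either perform the projection-and-HRR computation for concrete $G_1,G_2$ and separately justify evenness of $K_0(\A_X)_{\mathrm{num}}$, or, more efficiently, work inside $K_0(X)_{\mathrm{num}}$ with the six-element basis as the paper does.
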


We end this section with the explicit computation of the basis of Lemma \ref{lemmabase}. The Todd class of a GM fourfold $X$ is
$$\td(X)=1+h+\left( \frac{2}{3}h^2-\frac{1}{12}\gamma_X^*\sigma_2\right) +\frac{17}{60}h^3 + \frac{1}{10}h^4.$$
Let $P$ be a point in $X$, $L$ be a line lying on $X$, $\Sigma$ be the zero locus of a regular section of $\mathcal{U}_X^{\ast}$, $S$ be the complete intersection of two hyperplanes in $X$ and $H$ be a hyperplane section of $X$. Since $X$ is not Hodge-special, the structure sheaves of these subvarieties give a basis for the numerical Grothendieck group. Thus, an element $\kappa$ in $K_0(X)_{\text{num}}$ can be written as
$$\kappa=a[\mathcal{O}_X]+ b[\mathcal{O}_H]+c[\mathcal{O}_S]+d[\mathcal{O}_{\Sigma}] +e[\mathcal{O}_L]+f[\mathcal{O}_P],$$
for $a,b,c,d,e,f \in \Z$. A computation using Riemann-Roch gives that $\kappa$ belongs to $K_0(\A_X)$ if and only if it is a linear combination of the following classes:
\begin{align}
\label{basis}
\lambda_1 &:=2[\mathcal{O}_X]-[\mathcal{O}_{\Sigma}]-2[\mathcal{O}_L]+[\mathcal{O}_P] \\
\lambda_2 &:=4[\mathcal{O}_X]-2[\mathcal{O}_H]-[\mathcal{O}_S]-5[\mathcal{O}_L]+5[\mathcal{O}_P].\notag
\end{align}
It is easy to verify that the matrix they define with respect to the Euler form is as in Lemma \ref{lemmabase}. 

\begin{rmk}
Let $C$ be a generic conic in a GM fourfold $X$; we denote by $\mathcal{O}_C$ its structure sheaf. Notice that $\lambda_1$ is the class in the K-theory of $X$ of the projection of $\mathcal{O}_C(1)$ in $\A_X$. Indeed, the projection $\text{pr}: \D(X) \to \A_X$ is given by the composition $\text{pr}:=\mathbb{L}_{\mathcal{O}_X}\mathbb{L}_{\mathcal{U}_X^*}\mathbb{L}_{\mathcal{O}_X(1)}\mathbb{L}_{\mathcal{U}^*_X(1)}$ of the left mutation functors with respect to the exceptional objects (see for example \cite{BLMS}, Section 3 for the definition of left mutation). Performing this computation, we get that
$$[\text{pr}(\mathcal{O}_C(1))]=[\mathcal{O}_C(1)]-[\mathcal{O}_X(1)]+[\mathcal{U}_X^*]+[\mathcal{O}_X],$$ 
which has the same Chern character of $\lambda_1$. The second element $\lambda_2$ should be the class of an object in $\A_X$ obtained as the image of $\text{pr}(\mathcal{O}_C(1))$ via an autoequivalence of $\A_X$.
\end{rmk} 

\section{Mukai lattice for the Kuznetsov component}

In this section we describe the Mukai lattice of the Kuznetsov component. The main results of Section 3.1 are Proposition \ref{mukaivsvan}, where we prove that the vanishing lattice is Hodge isometric to the orthogonal of the lattice generated by $\lambda_1$ and $\lambda_2$ in the Mukai lattice, and Corollary \ref{correticoli}, where we determine Hodge-special GM fourfolds by their Mukai lattice. In Section 3.2 we relate the condition of having an associated K3 surface with the Mukai lattice (Theorem \ref{thmK3U}); as a consequence, we get Theorem \ref{thmHomHodge}, where we prove that the existence of a homological associated K3 surface implies that there is a Hodge-theoretic associated K3 surface for general Hodge-special GM fourfolds. Then we prove Theorem \ref{thm3intro}. We follow the methods introduced in \cite{AT} and \cite{Huy} for cubic fourfolds.   

\subsection{Mukai lattice and vanishing lattice}

Let $X$ be a GM fourfold. We denote by $K(X)_{\text{top}}$ the topological K-theory of $X$ which is endowed with the Euler pairing $\chi$. As recalled in Section 2.2, the group $H^{\bullet}(X,\Z)$ is torsion-free; by \cite[Section 2.5]{AH} (see also \cite{AT}, Theorem 2.1), it follows that also $K(X)_{\text{top}}$ is torsion-free. 

Inspired by \cite{AT}, we define the \textbf{Mukai lattice} of the Kuznetsov component $\A_X$ as the abelian group
$$\KT:=\lbrace \kappa \in K(X)_{\text{top}}: \chi([\mathcal{O}_X(i)],\kappa)=\chi([\mathcal{U}_X^*(i)],\kappa)=0 \, \text{ for }i=0,1 \rbrace$$
with the Euler form $\chi$. We point out that $\KT$ is torsion-free, because $K(X)_{\text{top}}$ is. We recall that the Mukai vector of an element $\kappa$ of $K(X)_{\text{top}}$ is given by
$$v(\kappa)=\ch(\kappa).\sqrt{\td(X)}$$
and it induces an isomorphism of $\Q$-vector spaces $v: K(X)_{\text{top}}\otimes \Q \cong H^{\bullet}(X,\Q)$. We define the weight-zero Hodge structure on the Mukai lattice given by pulling back via the isomorphism 
$$\KT \otimes \C \rightarrow \bigoplus_{p=0}^4 H^{2p}(X,\C)(p)$$
induced by $v$. It is also convenient to consider the Mukai lattice $\KT(-1)$ with weight-two Hodge structure $\bigoplus_{p+q=2}\tilde{H}^{p,q}(\A_X)$ and Euler form with reversed sign. In the following, we will use both conventions according to the situation. The \emph{Néron-Severi lattice} of $\A_X$ is 
$$N(\A_X)= \tilde{H}^{1,1}(\A_X,\Z):= \tilde{H}^{1,1}(\A_X)\cap \KT$$
and the \emph{trascendental lattice} $T(\A_X)$ is the orthogonal complement of the Néron-Severi lattice with respect to $\chi$. 

We observe that by \cite[Theorem 1.2]{KP}, there exist GM fourfolds $X$ such that the associated Kuznetsov component $\A_X$ is equivalent to the derived category of a K3 surface $S$. Moreover, any equivalence $\A_X \xrightarrow{\sim} \D(S)$ induces an isometry of Hodge structures $\KT(-1) \cong K(S)_{\text{top}}$, by the same argument used in \cite[Section 2.3]{AT}. We set $\tilde{\Lambda}:=U^4 \oplus E_8(-1)^2$ and we recall that $K(S)_{\text{top}}$ is isomorphic as a lattice to $\tilde{\Lambda}$. Since the definition of $\KT$ does not depend on $X$ (any two GM fourfolds are deformation equivalent), we deduce that the Euler form is symmetric on $\KT$ and $\KT$ is isomorphic as a lattice to $\tilde{\Lambda}(-1)= U^4\oplus E_8^2$.

We denote by $\langle \lambda_1,\lambda_2 \rangle^{\perp}$ the orthogonal complement with respect to the Euler pairing of the sublattice of $\KT$ generated by the objects $\lambda_1, \lambda_2$ determined in \eqref{basis}. In the next result, we explain the relation of this lattice with the vanishing lattice $H^4(X,\Z)_{00}$.

\begin{prop}
\label{mukaivsvan}
Let $X$ be a GM fourfold. Then the Mukai vector $v$ induces an isometry of Hodge structures
$$\langle \lambda_1,\lambda_2 \rangle^{\perp} \cong H^4(X,\Z)_{00}(2)=\langle h^2,\gamma_X^*\sigma_2 \rangle^{\perp}.$$
Moreover, for every set of $n$ objects $\zeta_1,\dots,\zeta_n$ in $K(\A_X)_{\emph{top}}$, the Mukai vector induces the isometry
$$\langle \lambda_1,\lambda_2, \zeta_1,\dots,\zeta_n \rangle^{\perp} \cong \langle h^2,\gamma_X^*\sigma_2,c_2(\zeta_1),\dots,c_2(\zeta_n)\rangle^{\perp}.$$
\end{prop}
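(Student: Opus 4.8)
The plan is to work with the Mukai vector isomorphism $v\colon K(X)_{\text{top}}\otimes\Q\xrightarrow{\sim}H^{\bullet}(X,\Q)$ and to compute explicitly the images $v(\lambda_1)$ and $v(\lambda_2)$, where $\lambda_1,\lambda_2$ are the classes in \eqref{basis}. Using the given Todd class $\td(X)=1+h+\left(\tfrac23 h^2-\tfrac1{12}\gamma_X^*\sigma_2\right)+\tfrac{17}{60}h^3+\tfrac1{10}h^4$ and the Chern characters of $\mathcal{O}_X,\mathcal{O}_H,\mathcal{O}_S,\mathcal{O}_\Sigma,\mathcal{O}_L,\mathcal{O}_P$ (all computable by Riemann--Roch on $X$, with $\ch(\mathcal{U}_X^*)$ known from the Grassmannian), I expect to find that $v(\lambda_1)$ and $v(\lambda_2)$ lie in $H^0(X)\oplus H^2(X)\oplus H^4(X,\Q)\oplus\cdots$ with nonzero components only in degrees $0$ and $4$ (and possibly $8$), and that their degree-$4$ components are rational combinations of $h^2$ and $\gamma_X^*\sigma_2$. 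The point is that $\langle v(\lambda_1),v(\lambda_2)\rangle\otimes\Q$ spans, together with the condition of being in $\KT\otimes\Q$, exactly the subspace of $H^{\bullet}(X,\Q)$ complementary to the vanishing cohomology.

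The key computation is to identify, for $\kappa\in\KT$, what $v(\kappa)$ looks like. By definition of $\KT$, $\kappa$ is $\chi$-orthogonal to $[\mathcal{O}_X(i)]$ and $[\mathcal{U}_X^*(i)]$ for $i=0,1$; translating via Hirzebruch--Riemann--Roch, $\chi(\alpha,\kappa)=\int_X \ch(\alpha)^\vee\ch(\kappa)\td(X)=\int_X v(\alpha)^\vee v(\kappa)$ (Mukai pairing), so the defining conditions say precisely that $v(\kappa)$ is Mukai-orthogonal to $v$ of the four exceptional objects. Since these four classes span (over $\Q$) the image of $H^{\bullet}(\G(2,V_5),\Q)\oplus(\text{something})$ — more precisely, since $H^\bullet(X,\Q)$ decomposes as the span of $1, h, h^3, h^4$, the classes $h^2,\gamma_X^*\sigma_2$, and the vanishing part $H^4(X,\Q)_{00}$ — I would check that $v(\KT)\otimes\Q$ equals $H^4(X,\Q)_{00}\oplus\langle v(\lambda_1),v(\lambda_2)\rangle_{\Q}$ and that $H^4(X,\Q)_{00}$ is exactly the $\chi$-orthogonal of $\lambda_1,\lambda_2$ inside $\KT\otimes\Q$. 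For the integral statement one must then compare lattices: $\langle\lambda_1,\lambda_2\rangle^\perp$ is the saturation in $\KT$ of its image, and $H^4(X,\Z)_{00}$ is primitive in $H^4(X,\Z)$; because $v$ restricted to this subspace sends $K$-theory lattices to cohomology lattices with a predictable scaling, one gets an isometry after the Tate twist by $2$ — the factor $2$ arising because the Mukai pairing involves $\sqrt{\td(X)}$ on both sides, effectively doubling the intersection form on the middle cohomology relative to the naive cup product, matching $H^4(X,\Z)_{00}(2)$. The Hodge-structure compatibility is automatic: $v$ is defined so as to intertwine the weight-zero Hodge structure on $\KT$ with $\bigoplus H^{2p}(X,\C)(p)$, and on the degree-$4$ summand this is the Tate twist of the Hodge structure on $H^4(X)$.

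For the second, more general statement, the argument is formally the same with $\lambda_1,\lambda_2$ replaced by $\lambda_1,\lambda_2,\zeta_1,\dots,\zeta_n$. The only new input is to observe that, modulo the subspace already spanned by $v(\lambda_1),v(\lambda_2)$ (which controls degrees $0,2$ and the $h^2,\gamma_X^*\sigma_2$ part of degree $4$) and modulo $H^0,H^2,H^6,H^8$, the class $v(\zeta_j)$ contributes in $H^4(X,\Q)_{00}$ exactly the projection of its degree-$4$ component, and $\ch_2(\zeta_j)=c_2(\zeta_j)$ up to lower-order corrections by $h^2$ (since $\zeta_j\in K(\A_X)$ already has $\ch_0=\ch_1=0$ after projection, or at least its relevant part does, because $\A_X$ is orthogonal to $\mathcal{O}_X$ and $\mathcal{O}_X(1)$). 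Thus imposing orthogonality to the additional $\zeta_j$ on the $K$-theory side corresponds to imposing orthogonality to $c_2(\zeta_j)$ on the cohomology side, giving $\langle\lambda_1,\lambda_2,\zeta_1,\dots,\zeta_n\rangle^\perp\cong\langle h^2,\gamma_X^*\sigma_2,c_2(\zeta_1),\dots,c_2(\zeta_n)\rangle^\perp$.

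The main obstacle I anticipate is the bookkeeping in the explicit Riemann--Roch computation of $\ch(\mathcal{U}_X^*)$ and of the six structure-sheaf classes, and in particular verifying cleanly that $v(\lambda_1)$ and $v(\lambda_2)$ have vanishing components in odd-degree-twist pieces $H^2$ and $H^6$ (so that the orthogonal complement genuinely lands in middle cohomology) and that the resulting scaling of the intersection form is uniformly $\times 2$, matching the lattice $I_{2,0}(2)$ appearing in $\Lambda$; one must be careful that the factor of $2$ is not absorbed or doubled incorrectly when passing between the Euler pairing, the Mukai pairing, and the cup product on $H^4$. A secondary subtlety is the primitivity/saturation argument needed to upgrade the rational isometry to an integral one, which should follow from $H^\bullet(X,\Z)$ (hence $K(X)_{\text{top}}$) being torsion-free together with the fact that $H^4(X,\Z)_{00}$ is by definition primitively embedded.
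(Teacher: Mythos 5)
Your strategy is essentially the one the paper follows: translate the Euler-pairing orthogonality conditions defining $\langle\lambda_1,\lambda_2\rangle^{\perp}$ into linear conditions on the graded pieces of $\ch(\kappa)$ via Riemann--Roch, and conclude that the only surviving piece is the component of $\ch_2(\kappa)$ lying in $\langle h^2,\gamma_X^*\sigma_2\rangle^{\perp}=H^4(X,\Z)_{00}$. Two points, however, are genuinely off. First, your reading of the ``$(2)$'' is wrong: it is a Tate twist of the weight of the Hodge structure (from weight $4$ down to weight $0$, matching the weight-zero structure on $\KT$), not a rescaling of the bilinear form. For classes $\kappa,\kappa'$ whose Chern characters are concentrated in degree $4$ one has $\chi(\kappa,\kappa')=\int_X\ch(\kappa)^{\vee}\ch(\kappa')\td(X)=\ch_2(\kappa)\cdot\ch_2(\kappa')$, so $v$ preserves the form on the nose; the factors of $2$ visible in $\Lambda=E_8^2\oplus U^2\oplus I_{2,0}(2)$ are intrinsic to $H^4(X,\Z)_{00}$ by \cite[Proposition 5.1]{DIM} and are not produced by $\sqrt{\td(X)}$. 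If you organize the verification around an expected doubling of the intersection form, the two sides will not match. Second, and more seriously, surjectivity of $v\colon\langle\lambda_1,\lambda_2\rangle^{\perp}\to H^4(X,\Z)_{00}$ does not follow from torsion-freeness together with primitivity of $H^4(X,\Z)_{00}$. Those facts give injectivity and saturation of the source, but a priori the image could be a proper finite-index sublattice (and even integrality of $v(\kappa)$ requires the Atiyah--Hirzebruch statement that the lowest-degree term of the Mukai vector is integral). The paper closes this gap by a discriminant comparison: $\langle\lambda_1,\lambda_2\rangle\cong I_{0,2}(2)$ sits primitively in the unimodular lattice $\KT$, so its orthogonal complement has discriminant group $(\Z/2\Z)^2$, the same as $\Lambda$, and an injective isometry between nondegenerate lattices of equal rank and equal discriminant is necessarily onto. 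Some such argument (or the one in \cite[Proposition 2.3]{AT}) must be supplied.

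A smaller inaccuracy: classes in $\KT$ do not have $\ch_0=\ch_1=0$ (for instance $\lambda_1$ has rank $2$), so your justification that the degree-four contribution of $\zeta_j$ is $c_2(\zeta_j)$ needs rephrasing. The conclusion survives because $\ch_1(\zeta_j)$ is a multiple of $h$ and $\ch_0(\zeta_j)\cdot\td(X)$ contributes only multiples of $h^2$ and $\gamma_X^*\sigma_2$ in degree four; hence the degree-four component of $v(\zeta_j)$ differs from $-c_2(\zeta_j)$ by a combination of $h^2$ and $\gamma_X^*\sigma_2$, which pairs to zero with every element of $H^4(X,\Z)_{00}$. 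This is exactly the computation carried out at the end of the paper's proof.
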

\begin{proof}
By definition $\kappa$ belongs to $\langle \lambda_1,\lambda_2 \rangle^{\perp} \subset \KT$ if and only if
\begin{equation}
\begin{cases}
\label{sist2}
\chi(\mathcal{O}_X,\kappa)=\chi(\mathcal{O}_X(1),\kappa)=\chi(\mathcal{U}^*_X,\kappa)=\chi(\mathcal{U}^*_X(1),\kappa)=0\\
\chi(\lambda_1,\kappa)=\chi(\lambda_2,\kappa)=0.
\end{cases}
\end{equation}
The Chern character of $\kappa$ has the form
$$\ch(\kappa)=k_0+k_2h+k_4+k_6h^3+k_8h^4 \quad \text{for }k_0,k_2,k_6,k_8 \in \Q \text{ and } k_4 \in H^4(X,\Q).$$ 
Thus, using Riemann-Roch, we can express the conditions \eqref{sist2} as a linear system in the variables $k_0,k_2,k_4 \cdot h^2,k_4 \cdot \gamma_X^*\sigma_{1,1},k_6,k_8$. Since the equations are linearly independent, we obtain that the system \eqref{sist2} has a unique solution, i.e.
$$k_0=k_2=k_6=k_8=0 \quad \text{and} \quad k_4 \cdot h^2=k_4 \cdot \gamma_X^*\sigma_{1,1}=0.$$
In particular, $\ch(\kappa)$ belongs to $\langle 1,h,h^2, \gamma_X^*\sigma_2,h^3,h^4 \rangle^{\perp}=\langle h^2,\gamma_X^*\sigma_2\rangle^{\perp}$ in $H^4(X,\Q)$. Since $k_4 \cdot h=0$,  $v(\kappa)=k_4$, i.e.\ $v(\kappa)$ is in the sublattice $\langle h^2,\gamma_X^*\sigma_2\rangle^{\perp}$ of $H^4(X,\Q)$. Since the lowest-degree term of the Mukai vector is integral (see \cite[Section 2.5]{AH} and \cite[Proposition 3.4]{DK2}), we conclude that $\kappa$ belongs to $\langle \lambda_1,\lambda_2 \rangle^{\perp}$ if and only if $v(\kappa)$ is in $H^4(X,\Z)_{00}$. 

By \cite[Section 2.5]{AH}, we have $v: \langle \lambda_1,\lambda_2 \rangle^{\perp} \rightarrow H^4(X,\Z)_{00}(2)$ is injective. 
It remains to prove the surjectivity. It is possible to argue as in the proof of \cite[Proposition 2.3]{AT} using \cite[Section 2.5]{AH}. We propose an alternative way. We observe that the lattices $\langle \lambda_1,\lambda_2 \rangle^{\perp}$ and $H^4(X,\Z)_{00}$ have both rank-$22$. Notice that $\langle \lambda_1,\lambda_2 \rangle^{\perp}$ has signature $(20,2)$. Moreover, the discriminant group of $\langle \lambda_1,\lambda_2 \rangle^{\perp}$ is isomorphic to $(\mathbb{Z}/2\mathbb{Z})^2$, because the Mukai lattice is unimodular. On the other hand, by Section 2.2 (see \cite[Proposition 5.1]{DIM}), we deduce that $H^4(X,\Z)_{00}$ and $\langle \lambda_1,\lambda_2 \rangle^{\perp}$ have the same signature and isomorphic discriminant groups. Since the genus of such a lattice contains only one element by \cite[Theorem 1.14.2]{Ni}, we conclude that $v$ is an isometry which preserves the Hodge structures, as we wanted.

For the second part of the proposition, let $v(\zeta_i)=z_0+z_2h+z_4+z_6h^3+z_8h^4$ with $z_0,z_2,z_6,z_8 \in \Q$ and $z_4 \in H^4(X,\Q)$. Using the previous computation, we have 
$$0=\chi(\zeta_i,\kappa)=\int_X \exp\left(h \right)v(\zeta_i)^*\cdot k_4= k_4 \cdot z_4$$
for every $\kappa$ in $\langle \lambda_1,\lambda_2, \zeta_1,\dots,\zeta_n \rangle^{\perp}$. Since $z_4$ is by definition a linear combination of $c_2(\zeta_i)$, $h^2$ and $\gamma_X^*\sigma_2$, using again that $k_4$ is in $H^4(X,\Z)_{00}$, we deduce that $k_4 \cdot z_4=0$ if and only if $k_4 \cdot c_2(\zeta_i)=0$. This completes the proof of the statement. 
\end{proof}

We point out that the lattice $\langle \lambda_1,\lambda_2 \rangle$ has a primitive embedding in $\KT$ by \cite[Corollary 1.12.3]{Ni}. By Proposition \ref{mukaivsvan}, we have the isomorphism of lattices
$$\langle \lambda_1,\lambda_2 \rangle^{\perp} \cong H^4(X,\Z)_{00} \cong E_8^2 \oplus U^2 \oplus I_{2,0}(2).$$
On the other hand, the lattice $\langle \lambda_1,\lambda_2 \rangle$ is isomorphic to $I_{0,2}(2)$. Notice that by \cite[Theorem 1.14.4]{Ni}, there exists a unique (up to isomorphism) primitive embedding
$$i: I_{0,2}(2) \hookrightarrow \tilde{\Lambda}(-1)=E_8^2 \oplus U^4.$$
Let us denote by $f_1,f_2$ the standard generators of $I_{0,2}(2)$ and by $u_1,v_1$ (resp.\ $u_2,v_2$) the standard basis of the first (resp.\ the second) hyperbolic plane $U$. Then, we define $i$ setting
$$i(f_1)=u_1-v_1 \quad i(f_2)=u_2-v_2.$$
The orthogonal complement of $I_{0,2}(2)$ via $i$ is
$$I_{0,2}(2)^{\perp}\cong E_8^2 \oplus U^2 \oplus I_{2,0}(2).$$
In particular, we have an isometry $\phi: \KT \cong \tilde{\Lambda}(-1)$ such that
\begin{equation}
\label{isophi}
\phi(\lambda_1)=i(f_1), \quad \phi(\lambda_2)=i(f_2), \quad \phi(\langle \lambda_1,\lambda_2 \rangle^{\perp})\cong I_{0,2}(2)^{\perp}\cong E_8^2 \oplus U^2 \oplus I_{2,0}(2),
\end{equation}
which is equivalent to the data of a marking for $X$. Hence, we can write $p(X)=[\phi_{\C}(\tilde{H}^{2,0}(\A_X))]$.

Now, we prove that the isomorphism of Proposition \ref{mukaivsvan} extends to the quotients $\KT/\langle \lambda_1,\lambda_2 \rangle$ and $H^4(X,\Z)/\langle h^2,\gamma_X^*\sigma_2 \rangle$. The proof is analogous to that of \cite[Proposition 2.4]{AT}.

\begin{prop}
\label{c2quoz}
The second Chern class induces a group isomorphism 
$$\bar{c}_2: \frac{K(\A_X)_{\emph{top}}}{\langle \lambda_1,\lambda_2 \rangle} \rightarrow \frac{H^4(X,\Z)}{\langle h^2,\gamma_X^*\sigma_2 \rangle}.$$
\end{prop}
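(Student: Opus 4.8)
The plan is to adapt the proof of \cite[Proposition 2.4]{AT}, with Proposition \ref{mukaivsvan} playing the role of \cite[Proposition 2.3]{AT}. Write $M := \KT$, $L := \langle\lambda_1,\lambda_2\rangle$ and $N := \langle h^2,\gamma_X^*\sigma_2\rangle = \gamma_X^*H^4(\G(2,V_5),\Z)$, and recall that $M$ and $H^4(X,\Z)$ are unimodular, that $H^2(X,\Z) = \Z h$, and that both $L$ and $N$ are primitive with $d(L)\cong d(N)\cong(\mathbb{Z}/2\mathbb{Z})^2$ (for $L$, from $d(\langle\lambda_1,\lambda_2\rangle^\perp)\cong(\mathbb{Z}/2\mathbb{Z})^2$ and unimodularity of $M$; for $N$, since a marking carries $N = \langle\gamma_X^*\sigma_{1,1},\gamma_X^*(\sigma_2-\sigma_{1,1})\rangle$ onto the orthogonal complement of $\Lambda$ in $I_{22,2}$, cf.\ Section 2.2). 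First I would check that $\bar c_2$ is a well-defined homomorphism: since $c_1(\kappa)\in H^2(X,\Z)=\Z h$ for every $\kappa\in M$, the failure of additivity $c_2(\kappa+\kappa')-c_2(\kappa)-c_2(\kappa') = c_1(\kappa)c_1(\kappa')$ is a multiple of $h^2\in N$, so $\kappa\mapsto c_2(\kappa)\bmod N$ is a homomorphism $M\to H^4(X,\Z)/N$, and a short Riemann--Roch computation from \eqref{basis} (giving $c_1(\lambda_1)=0$, $\ch_2(\lambda_1)=-[\Sigma]=-\gamma_X^*\sigma_{1,1}$, $c_1(\lambda_2)=-2h$, $\ch_2(\lambda_2)=0$, hence $c_2(\lambda_1)=\gamma_X^*\sigma_{1,1}\in N$ and $c_2(\lambda_2)=2h^2\in N$) shows it kills $L$, so it descends to $\bar c_2\colon M/L\to H^4(X,\Z)/N$. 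Also, on $L^\perp=\langle\lambda_1,\lambda_2\rangle^\perp$ the rank and $c_1$ vanish, so $c_2=-\ch_2=-v$ there, and by Proposition \ref{mukaivsvan} $c_2$ restricts to an isomorphism $L^\perp\xrightarrow{\sim}N^\perp=H^4(X,\Z)_{00}$.

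For injectivity I would argue as in the proof of Proposition \ref{mukaivsvan}: if $c_2(\kappa)\in N$, then from $c_2(\kappa)=\tfrac12 c_1(\kappa)^2-\ch_2(\kappa)$ and $h^2\in N$ one sees that the degree-four component of $v(\kappa)$ has trivial $H^4(X,\Q)_{00}$-part; since the same is true for the image of $L\otimes\Q$ under $v$ (the Chern characters of $\lambda_1,\lambda_2$ and the class $(\sqrt{\td(X)})_4$ all lie in $N\otimes\Q$), decomposing $\kappa=\kappa_L+\kappa_{L^\perp}$ in $M\otimes\Q=(L\otimes\Q)\oplus(L^\perp\otimes\Q)$ forces $v(\kappa_{L^\perp})=0$, hence $\kappa_{L^\perp}=0$ and $\kappa\in (L\otimes\Q)\cap M=L$ by primitivity of $L$.

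For surjectivity I would run the five lemma on the morphism of short exact sequences
\[
\begin{gathered}
0 \to L^\perp \to M/L \to M/(L\oplus L^\perp) \to 0,\\
0 \to N^\perp \to H^4(X,\Z)/N \to H^4(X,\Z)/(N\oplus N^\perp) \to 0,
\end{gathered}
\]
with vertical maps induced by $\bar c_2$: the rows are exact because $L$ and $N$ are nondegenerate, and the rightmost vertical arrow is well defined because $c_2(\ell+m)=c_2(\ell)+c_2(m)\in N\oplus N^\perp$ for $\ell\in L$, $m\in L^\perp$ (using $c_1(m)=0$). The left vertical map is the isomorphism above; the two outer groups both have order $|d(L)|=|d(N)|=4$ (unimodularity of $M$, $H^4(X,\Z)$ together with primitivity of $L$, $N$); and the right vertical arrow is injective, since if $c_2(\kappa)=n+w$ with $n\in N$ and $w\in N^\perp$, choosing $m\in L^\perp$ with $c_2(m)=w$ gives $c_2(\kappa-m)=n\in N$, whence $\kappa-m\in L$ by the injectivity already shown. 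Thus the right vertical arrow is an isomorphism of finite groups of equal order, and the five lemma yields that $\bar c_2$ is an isomorphism.

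The step I expect to require the most care is matching the two overlattices $M\supset L\oplus L^\perp$ and $H^4(X,\Z)\supset N\oplus N^\perp$ under $c_2$, i.e.\ checking that the rightmost square of the diagram commutes; this comes down to tracking the discriminant-form gluing data on both sides, where one must be careful about the sign (and Tate-twist) discrepancy between $c_2$ and $v$ on $L^\perp$ and about reducing modulo $N$ rather than merely modulo $\Z h^2$. Everything else is the same Riemann--Roch bookkeeping and Nikulin lattice theory already used in the proof of Proposition \ref{mukaivsvan}.
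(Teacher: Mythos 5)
Your proof is correct, but both halves take a genuinely different route from the paper. For injectivity, the paper argues that if $c_2(\kappa)$ lies in $\langle h^2,\gamma_X^*\sigma_2\rangle$ then $\ch(\kappa)$ lies in $H^0\oplus H^2\oplus\Z\langle h^2,\gamma_X^*\sigma_2\rangle\oplus H^6\oplus H^8$, invokes AK-compatibility of $X$ (from \cite[Section 5]{KP}) to write $\kappa$ as an integral combination of the structure sheaves $[\mathcal{O}_X],\dots,[\mathcal{O}_P]$, and then reruns the computation of Section 2.4; you instead decompose $\kappa$ over $\Q$ along $L\oplus L^\perp$, use that $v$ is injective on $L^\perp$ (Proposition \ref{mukaivsvan}) to kill the $L^\perp$-component, and conclude by primitivity of $L$ — which you correctly deduce from the order count $|d(L)|=|d(L^\perp)|=4$. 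For surjectivity, the paper lifts an arbitrary $T\in H^4(X,\Z)$ to a class $\tau\in K(X)_{\mathrm{top}}$ with $v(\tau)=-T+(\text{higher order})$ via \cite[Theorem 2.1(3)]{AT} and projects it into $\A_X$; you instead run the short five lemma on the two overlattice extensions, using that $M/(L\oplus L^\perp)$ and $H^4(X,\Z)/(N\oplus N^\perp)$ are both finite of order $|d(L)|=|d(N)|=4$ and that the induced map between them is injective by the injectivity already established. Your version is purely lattice-theoretic and needs no input beyond Proposition \ref{mukaivsvan} and unimodularity, at the price of not exhibiting explicit preimages; the paper's version is more topological but leans on AK-compatibility and the Atiyah--Hirzebruch-type lifting. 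Two small remarks: the commutativity of the rightmost square, which you flag as the delicate point, is in fact automatic (both composites send $[\kappa]$ to $[c_2(\kappa)]$, and no discriminant-form gluing data is needed); and your values $c_2(\lambda_1)=\gamma_X^*\sigma_{1,1}$, $c_2(\lambda_2)=2h^2$ differ from the ones printed in the paper's proof, but since both classes lie in $\langle h^2,\gamma_X^*\sigma_2\rangle$ either way, nothing is affected.
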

\begin{proof}
The composition of the projection $p: H^4(X,\Z) \twoheadrightarrow H^4(X,\Z)/\langle h^2,\gamma_X^*\sigma_2 \rangle$ with $c_2$ is a group homomorphism, because 
$$c_2(\kappa_1 + \kappa_2)=c_2(\kappa_1)+c_1(\kappa_1)c_1(\kappa_2)+c_2(\kappa_2)=c_2(\kappa_1)+mh^2+c_2(\kappa_2) \quad \text{for }m \in \Z.$$
Since the second Chern classes of $\lambda_1$ and $\lambda_2$ are respectively 
$$c_2(\lambda_1)=2 h^2 \quad \text{and} \quad c_2(\lambda_2)=-\gamma_X^*\sigma_{1,1},$$
it follows that $\langle \lambda_1,\lambda_2 \rangle$ is in the kernel of $p \circ c_2$. In particular, the induced morphism $\bar{c}_2$ of the statement is well-defined. 

Notice that $\bar{c}_2$ is injective. Indeed, let $\kappa$ be an element in $\KT$ such that $c_2(\kappa)$ belongs to the sublattice $\langle h^2,\gamma_X^*\sigma_2 \rangle$. In particular, $\kappa$ is an element of $K(X)_{\text{top}}$ such that $\ch(\kappa)$ belongs to $H^0(X,\Z) \oplus H^2(X,\Z) \oplus \Z\langle h^2,\gamma_X^*\sigma_2 \rangle \oplus H^6(X,\Z) \oplus H^8(X,\Z)$. Then $\kappa$ is a linear combination of $[\mathcal{O}_X], [\mathcal{O}_H], [\mathcal{O}_S], [\mathcal{O}_{\Sigma}],[\mathcal{O}_L],[\mathcal{O}_P]$ with the notation of Section 2.4, because $X$ is AK-compatible (see \cite[Section 5]{KP}). Since it belongs to $\KT$, by the same computation done in the end of Section 2.4, we deduce that $\kappa$ is a linear combination of $\lambda_1$ and $\lambda_2$, as we claimed.

Finally, we show that $\bar{c}_2$ is surjective. Let $T$ be a class in $H^4(X,\Z)$. By \cite[Theorem 2.1(3)]{AT}, there exists $\tau$ in $K(X)_{\text{top}}$ such that $v(\tau)$ is the sum of $-T$ with highter degree terms. Then the projection $\text{pr}(\tau)$ of $\tau$ in $\KT$ is a linear combination of $\tau$ and the classes of the exceptional objects in \eqref{excol}. Since the Chern classes of the exceptional objects are all multiples of $h^i$ and $\gamma_X^*\sigma_{1,1}$, it follows that $c_2(\text{pr}(\tau))$ differs form $c_2(\tau)$ by a linear combination of $h^2$ and $\gamma_X^*\sigma_{1,1}$. We conclude that $\bar{c_2}(\text{pr}(\tau))=c_2(\tau)=T$ in $H^4(X,\Z)/\langle h^2,\gamma_X^*\sigma_2 \rangle$. 
\end{proof}

\begin{rmk}
Notice that the image of the algebraic K-theory $K(\A_X)$ in $\KT$ is contained in $N(\A_X)$. However, we do not know if the opposite inclusion holds, because it is not clear if every Hodge class in $H^{2,2}(X,\Z)$ comes from an algebraic cycle with integral coefficients. In the case of cubic fourfolds the integral Hodge conjecture holds by the work of Voisin (see \cite{Voi}); thus, in \cite[Proposition 2.4]{AT} they use this fact to prove that the $(1,1)$ part of the Hodge structure on the Mukai lattice is identified with $K(\A_X)_{\text{num}}$. 

Voisin's argument should work also for GM fourfolds, but it requires to give a description of the intermediate Jacobian of a GM threefold, as done in \cite[Theorem 5.6]{MT} and \cite[Theorem 1.4]{Dru} in the cubic threefolds case. An other approach could be firstly to construct Bridgeland stability conditions for the Kuznetsov component (e.g.\ as in \cite{BLMS} for the Kuznetsov component of a cubic fourfold). Then, to deduce the integral Hodge conjecture by an argument on moduli spaces of stable objects with given Mukai vector, along the same lines as in \cite{BLMSpre} where they develop the argument for cubic fourfolds. 
\end{rmk} 

Finally, we need the following lemma, which is a consequence of Proposition \ref{c2quoz}; the proof is the same as that of \cite[Proposition 2.5]{AT}, so we skip it.
\begin{lemma}
\label{lemmauguale}
Let $\kappa_1, \dots, \kappa_n$ be in $K(\A_X)_{\emph{top}}$; we define the sublattices
$$M_K:=\langle \lambda_1,\lambda_2,\kappa_1,\dots,\kappa_n \rangle \subset K(\A_X)_{\emph{top}}$$
and 
$$M_H:=\langle h^2, \gamma_X^*\sigma_2,c_2(\kappa_1),\dots,c_2(\kappa_n) \rangle \subset H^4(X,\Z).$$
\begin{enumerate}
\item An element $\kappa$ of $K(\A_X)_{\emph{top}}$ is in $M_K$ if and only if $c_2(\kappa)$ is in $M_H$.
\item $M_H$ is primitive if and only if $M_K$ is.
\item $M_H$ is non degenerate if and only if $M_K$ is.
\item If $M_K$ is in $N(\A_X)$, then $M_K$ and $M_H$ are non-degenerate.
\item If $M_K$ and $M_H$ are non-degenerate, then $M_H$ has signature $(r,s)$ if and only if $M_K$ has signature $(r-2,s+2)$ and they have isomorphic discriminant groups.  
\end{enumerate}
\end{lemma}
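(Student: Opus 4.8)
The plan is to deduce all five statements from the already-established correspondence between sublattices of $K(\A_X)_{\text{top}}$ containing $\langle \lambda_1,\lambda_2\rangle$ and sublattices of $H^4(X,\Z)$ containing $\langle h^2,\gamma_X^*\sigma_2\rangle$, via the second Chern class. Part (1) is essentially a restatement of the fact that $\bar c_2$ of Proposition \ref{c2quoz} is an isomorphism: an element $\kappa \in K(\A_X)_{\text{top}}$ lies in $M_K$ iff its image in $K(\A_X)_{\text{top}}/\langle \lambda_1,\lambda_2\rangle$ lies in $M_K/\langle\lambda_1,\lambda_2\rangle$, and applying the isomorphism $\bar c_2$ this translates to $c_2(\kappa) + \langle h^2,\gamma_X^*\sigma_2\rangle$ lying in $M_H/\langle h^2,\gamma_X^*\sigma_2\rangle$, i.e.\ $c_2(\kappa)\in M_H$. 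One has to be slightly careful that $c_2$ is not additive on the nose, but $\bar c_2$ is a genuine group homomorphism by the computation in the proof of Proposition \ref{c2quoz}, so this goes through.

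For part (2), primitivity of $M_K$ in $K(\A_X)_{\text{top}}$ means $K(\A_X)_{\text{top}}/M_K$ is torsion-free; by part (1) the map $\bar c_2$ induces an isomorphism $K(\A_X)_{\text{top}}/M_K \cong H^4(X,\Z)/M_H$ (since $\bar c_2$ is an isomorphism of the big quotients carrying $M_K/\langle\lambda_1,\lambda_2\rangle$ onto $M_H/\langle h^2,\gamma_X^*\sigma_2\rangle$), so one quotient is torsion-free iff the other is. Part (3): non-degeneracy is detected after tensoring with $\Q$; on $M_K\otimes\Q$ the form is the Euler pairing and on $M_H\otimes\Q$ it is (up to the shift by $\langle h^2,\gamma_X^*\sigma_2\rangle$ in the ambient space) the intersection form. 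The key input is Proposition \ref{mukaivsvan}: the Mukai vector identifies $\langle\lambda_1,\lambda_2,\kappa_1,\dots,\kappa_n\rangle^\perp$ with $\langle h^2,\gamma_X^*\sigma_2,c_2(\kappa_1),\dots,c_2(\kappa_n)\rangle^\perp$ isometrically, and $\langle\lambda_1,\lambda_2\rangle$, $\langle h^2,\gamma_X^*\sigma_2\rangle$ are themselves non-degenerate; since the radical of $M_K$ equals $M_K\cap M_K^{\perp\perp}$-type considerations reduce to comparing $M_K^\perp$ with $M_H^\perp$, and a lattice is non-degenerate iff its orthogonal complement has the complementary rank, which holds on one side iff on the other by Proposition \ref{mukaivsvan} together with rank count ($\operatorname{rk} M_K = \operatorname{rk} M_H$, both equal to $2+n$ minus the same defect).

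Part (4) is immediate: if $M_K\subseteq N(\A_X)=\tilde H^{1,1}(\A_X,\Z)$, then the form on $N(\A_X)$ is negative definite on the part orthogonal to nothing-in-particular — more precisely $N(\A_X)$ carries the restriction of a form whose signature is $(2,\rho-2)$ type, but the crucial point used in \cite{AT} is that the generators $\lambda_1,\lambda_2$ already span a non-degenerate (negative definite) sublattice, so $M_K\supseteq\langle\lambda_1,\lambda_2\rangle$ forces the $\Q$-span of $M_K$ to meet the negative-definite part nontrivially and in fact $M_K$ is non-degenerate because any isotropic vector would have to be orthogonal to $\lambda_1,\lambda_2$ hence lie in the K3-type Hodge structure on $\langle\lambda_1,\lambda_2\rangle^\perp$, which has no nonzero isotropic algebraic vectors of the required type; then part (3) gives non-degeneracy of $M_H$. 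Part (5) is the signature and discriminant-group comparison: using $M_K = \langle\lambda_1,\lambda_2\rangle \oplus' (\text{something})$ up to finite index and $\langle\lambda_1,\lambda_2\rangle\cong I_{0,2}(2)$ has signature $(0,2)$ while $\langle h^2,\gamma_X^*\sigma_2\rangle$ has signature $(2,0)$, the shift in signature by $(-2,+2)$ follows from additivity of signatures over the orthogonal decompositions modulo the common complement $\langle\lambda_1,\lambda_2,\dots\rangle^\perp \cong \langle h^2,\gamma_X^*\sigma_2,\dots\rangle^\perp$ supplied by Proposition \ref{mukaivsvan}; and the discriminant groups agree because $\KT$ and $H^4(X,\Z)$ are both unimodular (the Mukai lattice is unimodular, and $H^4(X,\Z)\cong I_{22,2}$ is unimodular), so for a non-degenerate primitive sublattice $L$ of a unimodular lattice one has $d(L)\cong d(L^\perp)$, and $M_K^\perp = \langle\lambda_1,\lambda_2,\dots\rangle^\perp \cong \langle h^2,\dots\rangle^\perp = M_H^\perp$ isometrically by Proposition \ref{mukaivsvan}. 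The main obstacle is the bookkeeping in parts (3)--(5): keeping straight which sublattices are primitive, handling the non-degeneracy hypotheses correctly (they are assumed in (3) and (5) precisely because otherwise "signature" and "discriminant group" are not well-behaved), and making sure the finite-index subtleties between $M_K$ and $\langle\lambda_1,\lambda_2\rangle\oplus(M_K\cap\langle\lambda_1,\lambda_2\rangle^\perp)$ do not corrupt the signature count — but since signature is invariant under finite-index extension and under tensoring with $\Q$, this is safe. Because this is verbatim the argument of \cite[Proposition 2.5]{AT} with $H^4(X,\Z)_{00}$ in place of the primitive degree-four lattice of a cubic fourfold, I would simply cite that proof rather than reproduce it.
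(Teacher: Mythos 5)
Your overall strategy --- deduce everything from the isomorphism $\bar c_2$ of Proposition \ref{c2quoz} together with the isometry of orthogonal complements in Proposition \ref{mukaivsvan} --- is exactly the intended one: the paper gives no proof at all and simply cites \cite[Proposition 2.5]{AT}, and your parts (1), (2) and (4) are sound reconstructions of that argument. However, your justification of part (3) does not work as written. The criterion ``a sublattice is non-degenerate iff its orthogonal complement has complementary rank'' is vacuous: in a non-degenerate ambient lattice $L$ one has $\mathrm{rk}(M^\perp)=\mathrm{rk}(L)-\mathrm{rk}(M)$ for \emph{every} sublattice $M$, degenerate or not (the map $L\otimes\Q\to (M\otimes\Q)^*$ induced by the form is surjective with kernel $M^\perp\otimes\Q$), so the rank count detects nothing; likewise the radical is $M\cap M^\perp$, not $M\cap M^{\perp\perp}$ (the latter is all of $M$). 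The argument that actually works compares the radicals directly: a nonzero $\kappa\in M_K\cap M_K^\perp$ is in particular orthogonal to $\lambda_1,\lambda_2$, hence lies in $\langle\lambda_1,\lambda_2\rangle^\perp$, where the Mukai vector is injective and coincides up to sign with $c_2$; by part (1) and the second statement of Proposition \ref{mukaivsvan} its image is then a nonzero element of $M_H\cap M_H^\perp$, and the converse uses the surjectivity of $v$ onto $H^4(X,\Z)_{00}$. This is the one genuinely missing idea in your sketch.

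A smaller point concerns the discriminant groups in part (5): the identity $d(L)\cong d(L^\perp)$ inside a unimodular lattice requires $L$ to be \emph{primitive}, which is not among the hypotheses of (5). As stated, your argument only proves the claim after replacing $M_K$ and $M_H$ by their saturations (which correspond under $\bar c_2$ by parts (1)--(2) and have isometric orthogonal complements); for non-primitive $M_K$ one must additionally track how $d(M_K)$ is assembled from $d(M_K^{\mathrm{sat}})$ and the finite group $M_K^{\mathrm{sat}}/M_K$. Since in every application in the paper (Corollary \ref{correticoli}, Theorem \ref{thmK3U}) the lattice $M_K$ is either taken primitive or only its determinant is used, this is a repairable slip rather than a fatal one, but it should be made explicit. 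The signature statement and part (4) are fine: signature only sees $M\otimes\Q$, and the positivity input for (4) is indeed the Hodge--Riemann relations on the algebraic part of $H^4(X,\Z)_{00}$, exactly as the paper uses later in the rank-four case of Theorem \ref{thmK3U}.
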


\begin{cor}
\label{correticoli}
The period point of a Hodge-special GM fourfold $X$ belongs to the divisor $\mathcal{D}_d$ (resp.\ to the union of the divisors $\mathcal{D}_d'$ and $\mathcal{D}_d''$) for $d \equiv 0 \pmod 4$ (resp.\ for $d \equiv 2 \pmod 8$) if and only if there exists a primitive sublattice $M_K$ of $N(\A_X)$ of rank-$3$ and discriminant $d$ which contains $\langle \lambda_1,\lambda_2 \rangle$.
\end{cor}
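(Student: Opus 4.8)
The plan is to translate the characterization of Hodge-special GM fourfolds of discriminant $d$ from the vanishing cohomology side to the Mukai lattice side, using Lemma \ref{lemmauguale} as the dictionary. Recall that $X$ is Hodge-special of discriminant $d$ precisely when $H^{2,2}(X,\Z)$ contains a primitive rank-$3$ sublattice $K$ with $\gamma_X^*(H^4(\G(2,V_5),\Z)) = \langle h^2,\gamma_X^*\sigma_2 \rangle \subset K$ and $\det K = d$, and that by \cite[Corollary 6.3]{DIM} the period point lies in $\mathcal{D}_d$ (resp.\ $\mathcal{D}_d' \cup \mathcal{D}_d''$) exactly in the cases $d \equiv 0 \pmod 4$ (resp.\ $d \equiv 2 \pmod 8$). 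So it suffices to establish a bijective correspondence between such labellings $K$ and primitive rank-$3$ sublattices $M_K \subset N(\A_X)$ containing $\langle \lambda_1,\lambda_2 \rangle$, matching the discriminants.

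First I would set up the correspondence. Given a labelling $K \subset H^{2,2}(X,\Z)$, pick classes $\kappa_1$ mapping to a generator of $K/\langle h^2,\gamma_X^*\sigma_2\rangle$; more precisely, since $K$ contains $\langle h^2,\gamma_X^*\sigma_2\rangle$ and $\bar c_2$ of Proposition \ref{c2quoz} is an isomorphism of the quotients, there is $\kappa \in N(\A_X)$ (note that $c_2(\kappa) \in K \subset H^{2,2}$ forces $\kappa$ to be of Hodge type, hence in $N(\A_X)$, by the compatibility of the Hodge structures via $v$) whose class $\bar c_2(\kappa)$ generates $K/\langle h^2,\gamma_X^*\sigma_2\rangle$; set $M_K := \langle \lambda_1,\lambda_2,\kappa\rangle$ and $M_H := \langle h^2,\gamma_X^*\sigma_2, c_2(\kappa)\rangle = K$. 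Conversely, given a primitive rank-$3$ sublattice $M_K \subset N(\A_X)$ containing $\langle \lambda_1,\lambda_2\rangle$, write $M_K = \langle \lambda_1,\lambda_2,\kappa\rangle$ and set $M_H = \langle h^2,\gamma_X^*\sigma_2,c_2(\kappa)\rangle$; since $\kappa \in N(\A_X)$ one has $c_2(\kappa) \in H^{2,2}(X,\Z)$, so $M_H$ is a sublattice of $H^{2,2}(X,\Z)$ containing $\gamma_X^*(H^4(\G(2,V_5),\Z))$. Parts (1)--(5) of Lemma \ref{lemmauguale} then do all the work: part (2) shows $M_K$ is primitive iff $M_H$ is; part (4) (with $M_K \subset N(\A_X)$) shows both are non-degenerate, hence of rank $3$; part (5) shows they have isomorphic discriminant groups, and since $M_H \cong K$ is a labelling of some discriminant and $M_K$ has signature $(1,2)$ against $M_H$'s signature $(3,0)$, the discriminant groups being isomorphic forces $|\det M_K| = |\det M_H| = d$, so $M_K$ has discriminant $d$ as well. (One should check the sign conventions: $M_H$ positive definite of determinant $d > 0$, and $M_K$, as a rank-$3$ sublattice of the signature-$(4,20)$ lattice $\KT$ containing the negative-definite plane $\langle\lambda_1,\lambda_2\rangle = I_{0,2}(2)$, has signature $(1,2)$ and determinant $d$.)

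The main obstacle, and the point needing the most care, is the claim that a class $\kappa \in K(\A_X)_{\text{top}}$ with $c_2(\kappa)$ of Hodge type is itself of Hodge type, i.e.\ lies in $N(\A_X)$ — equivalently, that the bijection of Proposition \ref{c2quoz} is compatible with the Hodge structures in the relevant sense. This follows from the fact that the Mukai vector $v$ is an isomorphism of $\Q$-Hodge structures $K(X)_{\text{top}}\otimes \Q \cong H^\bullet(X,\Q)$ together with Proposition \ref{mukaivsvan}: if $c_2(\kappa)$ is algebraic then, modulo $\langle \lambda_1,\lambda_2\rangle$ (which is algebraic), $\kappa$ projects into the $(1,1)$-part, and since $\langle\lambda_1,\lambda_2\rangle \subset N(\A_X)$ one concludes $\kappa \in N(\A_X)$; here one invokes that the Hodge conjecture holds for $X$ by \cite{CM}, so the rational $(2,2)$-classes are spanned by algebraic cycles and correspond under $v$ to the $(1,1)$-part of the rational Mukai–Hodge structure. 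Once this is in place, the rest is a direct application of the lemmas already established, in close analogy with \cite[Proof of the analogous statement]{AT}.
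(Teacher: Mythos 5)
Your proposal is correct and takes essentially the same route as the paper, whose proof simply observes that the statement is a reformulation of the existence of a labelling of discriminant $d$ and then invokes Lemma \ref{lemmauguale} (with Proposition \ref{c2quoz} providing the lift). The one point you rightly isolate — that a class $\kappa$ with $c_2(\kappa)$ of Hodge type actually lies in $N(\A_X)$, because the Mukai vector respects the Hodge structures and every even-degree component outside $H^4$ is automatically of type $(p,p)$ — is left implicit in the paper but is handled correctly in your argument.
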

\begin{proof}
As recalled in Section 2.3, the period point of $X$ satisfies the condition of the statement if and only if there is a labelling $M_H$ of $H^{2,2}(X,\Z)$ with discriminant $d$. The claim follows from Lemma \ref{lemmauguale}.
\end{proof}

\subsection{Associated (twisted) K3 surface and Mukai lattice}

The first result of this section characterizes period points of general Hodge-special GM fourfolds by their Mukai lattice. It is analogous to \cite[Theorem 3.1]{AT} for cubic fourfolds and the proof develops in a similar fashion. 

\begin{thm}
\label{thmK3U}
Let $X$ be a Hodge-special GM fourfold. If $X$ admits a Hodge-associated K3 surface, then $N(\A_X)$ contains a copy of the hyperbolic plane. Moreover, the converse holds assuming one of the following conditions:
\begin{enumerate}
\item $X$ is general (i.e.\ $H^{2,2}(X,\Z)$ has rank-$3$);
\item There is an element $\tau$ in the hyperbolic plane such that $\langle \lambda_1, \lambda_2, \tau \rangle$ has discriminant $d \equiv 2$ or $4 \pmod 8$. 
\end{enumerate}
\end{thm}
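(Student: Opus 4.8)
The plan is to mirror closely the proof of \cite[Theorem 3.1]{AT}, transporting everything through the dictionary of Proposition \ref{mukaivsvan} and Lemma \ref{lemmauguale}. For the forward implication, suppose $X$ has a Hodge-associated K3 surface $S$ of degree $d$, so by \cite[Proposition 6.5]{DIM} the discriminant $d$ satisfies $(\ast\ast)$ and the non-special lattice $K^{\perp}$ is Hodge-isometric to $H^2(S,\Z(-1))_0$. The key observation is that the transcendental lattice $T(\A_X)$ sits inside $K^{\perp}$ (the labelling $M_H$ being algebraic), hence is Hodge-isometric to a sub-Hodge-structure of $T_S(-1)$; in fact, by a rank/discriminant count they must coincide up to the sign twist. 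Then I would use the standard fact (as in \cite{AT}) that $\widetilde{H}(S,\Z) = T_S \oplus^{\perp} NS(S)$ with $NS(S) \supseteq U$ (a hyperbolic plane is always split off the Mukai lattice of a K3, e.g. $\langle v, v'\rangle$ with $v$ the Mukai vector of a point), and transport this $U$ back into $N(\A_X) = \widetilde{H}^{1,1}(\A_X,\Z)$ via the Hodge isometry on transcendental lattices extended to the full Mukai lattices (which exists because the orthogonal complements of Hodge-isometric primitive sublattices in the indefinite unimodular lattices $\widetilde{\Lambda}(-1)$ and $\widetilde{H}(S,\Z)$ are unique in their genus by \cite[Theorem 1.14.2, Corollary 1.5.2]{Ni}). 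This places a copy of $U$ in $N(\A_X)$.

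For the converse, suppose $U = \langle \kappa_1, \kappa_2\rangle \hookrightarrow N(\A_X)$ primitively. In case (1), $H^{2,2}(X,\Z)$ has rank $3$, so $N(\A_X)$ has rank $3$ and $T(\A_X)$ has rank $21$ with signature $(19,2)$. The orthogonal complement $U^{\perp}$ in $N(\A_X)$ is then a rank-one negative-definite lattice $\langle -2e\rangle$ for some $e>0$; set $M_K := \langle \lambda_1,\lambda_2\rangle^{\perp\perp}$ inside $N(\A_X)$, or more directly take $M_K$ to be the saturation of $\langle \lambda_1,\lambda_2,\text{(a generator of }U^{\perp})\rangle$. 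By Lemma \ref{lemmauguale}, $M_K$ corresponds to a rank-$3$ primitive labelling $M_H \subset H^{2,2}(X,\Z)$ of some discriminant $d$, and the orthogonal $M_H^{\perp}$, being Hodge-isometric (via Proposition \ref{mukaivsvan} combined with the splitting $T(\A_X) \oplus U$ inside $\widetilde H$) to the transcendental part plus a $U$, contains a hyperbolic plane; I would then invoke \cite[Proposition 6.5]{DIM} backwards: possessing $U$ in the non-special lattice forces $K^{\perp} \cong H^2(S,\Z(-1))_0$ for a suitable K3 surface $S$, i.e. $d$ satisfies $(\ast\ast)$. The point is exactly that an embedding of $U$ is equivalent to being able to write the non-special lattice as $U \oplus \Lambda_d$ with $\Lambda_d$ the primitive K3 lattice of the right degree — this is the lattice-theoretic content behind $(\ast\ast)$. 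In case (2), the hypothesis directly hands us $M_K := \langle \lambda_1,\lambda_2,\tau\rangle$ primitive of discriminant $d \equiv 2,4 \pmod 8$; I would check primitivity using that $\tau$ lies in the hyperbolic plane and argue as above that $M_H^{\perp}$ contains $U$ (the complementary generator of $U$ relative to $\tau$, projected orthogonally), hence again $(\ast\ast)$ holds and a Hodge-associated K3 exists.

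The main obstacle, and the step that needs the most care, is the bookkeeping in the converse direction: producing from an abstract $U \subset N(\A_X)$ a genuine \emph{labelling} $M_K$ that still contains $\langle \lambda_1,\lambda_2\rangle$ and has the correct discriminant class mod $8$, and then verifying that the resulting $M_H^{\perp}$ really does split off a hyperbolic plane in a way compatible with the Hodge structure (not merely as abstract lattices). This is why the two extra hypotheses in (1) and (2) are needed — in the general rank-$3$ case the rank forces everything, while in higher rank one must be handed the element $\tau$ with controlled discriminant to avoid the pathological situation (discussed in Section 3.3) where $U$ embeds but no Hodge-associated K3 exists. The uniqueness-of-embedding results \cite[Theorem 1.14.2, Theorem 1.14.4, Corollary 1.12.3]{Ni} and the signature/discriminant translation of Lemma \ref{lemmauguale}(5) are the technical workhorses throughout, exactly as in \cite{AT}.
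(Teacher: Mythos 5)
Your forward implication and the rank-$3$ case of the converse are broadly in the spirit of the paper's proof (the essential point there is that $U$, being unimodular, splits off the rank-$3$ saturated labelling $M_K$, so $M_K\cong U\oplus \Z w$ with $w^2=-d$ and hence $M_K^{\perp}$ is isometric to $\Z w^{\perp}$ in $U^3\oplus E_8^2$, which is exactly the lattice-theoretic content of $(\ast\ast)$). One caveat even here: your stated mechanism, that ``possessing $U$ in the non-special lattice forces $K^{\perp}\cong H^2(S,\Z(-1))_0$'', is not correct as a lattice statement --- the non-special lattice of \emph{any} labelling contains hyperbolic planes, whatever $d$ is, so containing a $U$ cannot be the criterion. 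What does the work is the identification $M_K\cong U\oplus\Z w$ of the labelling itself, not a $U$ inside its orthogonal complement.

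The genuine gap is in case (2), where the lattice $\langle\lambda_1,\lambda_2,\kappa_1,\kappa_2\rangle$ has rank $4$. You assert that the hypothesis ``hands us'' a labelling of discriminant $d\equiv 2$ or $4\pmod 8$ and that this, together with a $U$ in the orthogonal, yields $(\ast\ast)$. But $(\ast\ast)$ requires \emph{both} $8\nmid d$ \emph{and} that every odd prime dividing $d$ is $\equiv 1\pmod 4$; a discriminant $\equiv 4\pmod 8$ such as $12$ fails $(\ast\ast)$. The given $\tau$ therefore need not produce a labelling with a Hodge-associated K3, and the entire content of the rank-$4$ case is to find a \emph{different} class $x\kappa_1+y\kappa_2$ whose labelling does. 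Concretely, the paper studies the positive binary quadratic form $Q(x,y)=\mathrm{disc}\,\langle\lambda_1,\lambda_2,x\kappa_1+y\kappa_2\rangle=(2k^2+2l^2)x^2+(8+4km+4ln)xy+(2m^2+2n^2)y^2$, writes $Q=hq$ with $h$ the content, proves via Gaussian integers that the odd prime divisors of $h$ are all $\equiv 1\pmod 4$ and that $8\nmid h$ (this last step is the \emph{only} place hypothesis (2) enters), constrains $q$ modulo $4$, and shows the primitive form $q$ represents a prime $\equiv 1\pmod 4$; only then does some $\langle\lambda_1,\lambda_2,x\kappa_1+y\kappa_2\rangle$ have discriminant satisfying $(\ast\ast)$, after which the rank-$3$ argument applies to its saturation. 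None of this number-theoretic core appears in your proposal, and without it the converse in case (2) does not follow. (Indeed, Section 3.3 of the paper exhibits rank-$4$ examples with $U\subset N(\A_X)$ and \emph{no} Hodge-associated K3 surface, so no purely formal transport of a hyperbolic plane can close this case.)
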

\begin{proof}
Assume that $X$ has a Hodge-associated K3 surface; as recalled in the introduction and in Section 2.3, there exists a labelling $M_H$ whose discriminant $d$ satisfies $(\ast\ast)$. Equivalently, by Corollary \ref{correticoli}, there exists a primitive sublattice $M_K$ in $N(\A_X)$ of rank-$3$ containing $\langle \lambda_1,\lambda_2 \rangle$, with same discriminant $d$. Thus, there exists a rank-one primitive sublattice $\Z w$ and a primitive embedding $j: \Z w \hookrightarrow U^3 \oplus E_8^2$ with $w^2=-d$, such that $M_K^{\perp}$ in $\KT$ is isomorphic to $\Z w^{\perp}$. Adding $U$ to both sides of $j$, we get the primitive embedding of $U \oplus \Z w$ in $\tilde{\Lambda}(-1)$. Since $U \oplus \Z w$  and $M_K \subset \KT \cong \tilde{\Lambda}(-1)$ have isomorphic orthogonal complements, they have isomorphic discriminant groups by \cite[Corollary 1.6.2]{Ni}. Since one contains $U$, they are isomorphic by \cite[Corollary 1.13.4]{Ni}. In particular, we conclude that $U$ is contained in $M_K \subset N(\A_X)$, as we wanted.

Conversely, let $X$ be as in the second part of the statement and let $\kappa_1,\kappa_2$ be two classes in $N(\A_X)$ spanning a copy of $U$. Notice that $\langle \lambda_1,\lambda_2 \rangle$ is negative definite and $U$ is indefinite; hence, the lattice $\langle \lambda_1,\lambda_2,\kappa_1,\kappa_2 \rangle$ has rank three or four. We distinguish these two cases.

\begin{rank3}
Let $M_K$ be the saturation of $\langle \lambda_1,\lambda_2,\kappa_1,\kappa_2 \rangle$ and we denote by $d$ its discriminant. We have the inclusions $U \subset M_K \subset \KT \cong \tilde{\Lambda}(-1)$. Since $U$ is unimodular, there exists a rank-one sublattice $\Z w$ with $w^2=-d$ such that $M_K \cong U \oplus \Z w$. On the other hand, the orthogonal to $U$ in $\KT$ is an even unimodular lattice of signature $(19,3)$; thus it is isomorphic to $U^3 \oplus E_8^2$. As a consequence, $M_K^{\perp}$ in $\KT$ is isomorphic to $\Z w^{\perp}$ in $U^3 \oplus E_8^2$. As observed before, this is equivalent to the existence of a labelling $M_H$ for $X$ of discriminant $d$ satisfying condition $(\ast\ast)$. This ends the proof in the rank-three case. In particular, this proves the statement for $X$ general.  
\end{rank3}

\begin{rank4}
Consider the rank-three lattices of the form $\langle \lambda_1,\lambda_2, x\kappa_1 +y\kappa_2 \rangle$, where $x$ and $y$ are integers not both zero. We define the quadratic form
$$Q(x,y):=\begin{cases}
\text{disc}(\langle \lambda_1,\lambda_2, x\kappa_1 +y\kappa_2 \rangle) & \text{ if } x \neq 0 \text{ or } y \neq 0 \\
0 & \text{ if } x=y=0.
\end{cases}$$
We observe that the second Chern class $c_2(x\kappa_1 +y\kappa_2)$ is in $H^{2,2}(X)$; hence, by the Hodge-Riemann bilinear relations and Lemma \ref{lemmauguale}, it follows that $Q(x,y)$ is positive unless $x=y=0$.

Let 
\[
\begin{pmatrix}
 -2 & 0 & k & m \\ 
 0 & -2 & l & n \\ 
 k & l & 0 & 1 \\ 
 m & n & 1 & 0
 \end{pmatrix} 
\]
be the matrix defined by the Euler pairing on the lattice $\langle \lambda_1,\lambda_2,\kappa_1,\kappa_2 \rangle$. We have 
\begin{align*}
Q(x,y)&= \begin{vmatrix}
-2 & 0 & kx+my \\ 
0 & -2 & lx+ny \\ 
kx+my & lx+ny & 2xy
\end{vmatrix} \\
&=8xy+2(kx+my)^2+2(lx+ny)^2 \\
&=(2k^2+2l^2)x^2+(8+4km+4ln)xy+(2m^2+2n^2)y^2.
\end{align*}
We set
$$A:=2k^2+2l^2, \quad B:=8+4km+4ln, \quad C:=2m^2+2n^2.$$
We denote by $h$ the greatest common divisor of $A, B$ and $C$; notice that $h$ is even. We set
$$a=A/h, \quad b=B/h, \quad c=C/h$$
and we have $Q(x,y)=hq(x,y)$, where
$$q(x,y)=ax^2+bxy+cy^2.$$
In the next lemmas we prove that $h$ satisfies $(\ast \ast)$ and that there exist integers $x$ and $y$ such that $q(x,y)$ represents a prime $p \equiv 1 \pmod 4$ . 

\begin{lemma}
\label{lemma1}
The only odd primes that divide the greatest common divisor $h$ of the coefficients of $Q$ are $\equiv 1 \pmod 4$. Moreover, we have $8 \nmid h$.
\end{lemma}
\begin{proof}
Let $\Z[\sqrt{-1}]$ be the domain of Gaussian integers with the Euclidean norm $|\:|$. We set
$$\alpha:= k+l\sqrt{-1} \quad \text{and} \quad \gamma:=m+n\sqrt{-1}.$$
We rewrite the coefficients of $Q$ as
$$A=2|\alpha|^2, \quad B=4\text{Re}(\alpha\bar{\gamma})+8, \quad C=2|\gamma|^2.$$
Suppose that $p$ is an odd prime which is not congruent to $1$ modulo $4$, i.e. $p \equiv 3 \pmod 4$. Then $p$ is prime in $\Z[\sqrt{-1}]$ (see \cite[Proposition 4.18]{Cox}). Thus if $p$ divides $A=2\alpha\bar{\alpha}$, then $p$ divides $\alpha$. In particular, $p$ divides $\text{Re}(\alpha\bar{\gamma})$; so $p$ does not divide $\text{Re}(\alpha\bar{\gamma})+2$. It follows that $p$ does not divide $B$ and we conclude that $p \nmid h$. 

For the second part, we observe that $8 \mid h$ if and only if $k, l, m, n$ are even. In this case, we have $8 \mid Q(x,y)$ for every $x, y \in \Z$. However, the assumption we made in item 2 of the theorem exclude this possibility. 
\end{proof}

\begin{lemma}
\label{lemma2}
We have $a \nequiv 3 \pmod 4$, $c \nequiv 3 \pmod 4$, and $b$ is even. 
\end{lemma}
\begin{proof}
By definition we have 
$$k^2+l^2=\frac{h}{2}a \quad \text{and} \quad m^2+n^2=\frac{h}{2}c.$$
Notice that if an odd prime $\equiv 3 \pmod 4$ divides the sum of two squares, then it has to appear with even exponent (see \cite[Corollary 5.14]{NZ}). Since by Lemma \ref{lemma1} the only odd primes dividing $h$ are $\equiv 1 \pmod 4$, a prime $\equiv 3 \pmod 4$ appears in the prime factorization of $a$ and $c$ only with even exponent. This gives the first part of the claim.

Now, we prove that $b$ is odd if and only if $8 \mid h$. This implies the desired statement by the second part of Lemma  \ref{lemma1}.

Assume that $b$ is odd. Since
$$B=4(2 + \text{Re}(\alpha \bar{\gamma}))=hb,$$
we have $4 \mid h$. Thus, $4$ divides $A=2|\alpha|^2$ and $C=2|\gamma|^2$. It follows that $(1 + \sqrt{-1}) \mid \alpha$ and $(1 + \sqrt{-1}) \mid \gamma$, which implies that $2 \mid \alpha \bar{\gamma}$. We conclude that $8 \mid B$ and thus $8 \mid h$.

Conversely, assume that $8 \mid h$; arguing as above, we see that $8 \mid B$. Notice that $2 \nmid h/8$, because otherwise $2 \mid B/8$, in contradiction with the fact that $B=8(1+2r)$. Since
$$\frac{B}{8}=1+2r=\frac{h}{8}b,$$
we conclude that $b$ is odd.
\end{proof}

\begin{lemma}
\label{lemma3}
There exist integers $x$ and $y$ such that $q(x,y)$ is a prime $p \equiv 1 \pmod 4$.
\end{lemma}
\begin{proof}
We adapt part of the proof of \cite[Proposition 3.3]{AT} to our case. Let us list all the possible forms $q(x,y)$ modulo $4$, using the restrictions given by Lemma \ref{lemma2}:
\begin{center}
\begin{tabular}{|c|c|}
\hline 
\multicolumn{2}{|c|}{For $b \equiv 0 \pmod 4$:}  \\
\hline 
$a \equiv 0 \pmod 4$ & $0, y^2, 2y^2$ \\ 
\hline 
$a \equiv 1 \pmod 4$ & $x^2, x^2+y^2$ \\ 
\hline 
$a \equiv 2 \pmod 4$ & $2x^2, 2x^2+2y^2$ \\ 
\hline  
\end{tabular} 
\end{center}

\begin{center}
\begin{tabular}{|c|c|}
\hline 
\multicolumn{2}{|c|}{For $b \equiv 2 \pmod 4$:}  \\
\hline 
$a \equiv 0 \pmod 4$ & $2xy, 2xy+2y^2$ \\ 
\hline 
$a \equiv 1 \pmod 4$ & $x^2+2xy+y^2, x^2+2xy+2y^2$ \\ 
\hline 
$a \equiv 2 \pmod 4$ & $2x^2+2xy, 2x^2+2xy+y^2, 2x^2+2xy+2y^2$ \\ 
\hline
\end{tabular} 
\end{center}
Notice that we have excluded the cases 
$$x^2+2y^2, \quad x^2+2xy, \quad 2x^2+y^2, \quad 2xy+y^2,$$
because by completing the square we get
$$x^2+2xy=(x+y)^2-y^2 \equiv (x+y)^2+3y^2 \pmod 4$$
and
$$x^2+2y^2=(x+y)^2-2xy+y^2 \equiv (x+y)^2+2xy+y^2 \equiv 2(x+y)^2+3x^2 \pmod 4,$$
which is not possible by Lemma \ref{lemma2}.

We exclude the cases corresponding to a non primitive form, i.e.\ 
$$0,2y^2, 2x^2, 2x^2+2y^2, 2xy, 2xy+2y^2, 2x^2+2xy, 2x^2+2xy+2y^2.$$ 

In the other cases, we find that $q$ can represent only numbers which are $\equiv 0$ or $1 \pmod 4$ (i.e.\ $y^2, x^2, (x+y)^2, x^2+2xy+2y^2, 2x^2+2xy+y^2$), or $\equiv 0, 1$ or $2 \pmod 4$ (i.e.\ $x^2+y^2$). Since a primitive positive definite form represents infinitely many primes, it must represent a prime $\equiv 1 \pmod 4$. 
\end{proof}
\end{rank4}
We observe that $h$ satisfies $(\ast\ast)$ by Lemma \ref{lemma1}. Thus, by Lemma \ref{lemma3} we conclude that there exist some integers $x$ and $y$ not both zero such that the discriminant of the lattice $\langle \lambda_1,\lambda_2, x\kappa_1 +y\kappa_2 \rangle$ satisfies $(\ast\ast)$. This observation implies the proof of the statement. Indeed, if $M_K$ is the saturation of this lattice, then its discriminant still satifies condition $(\ast\ast)$, because $\text{discr}(\langle \lambda_1,\lambda_2, x\kappa_1 +y\kappa_2 \rangle)=i^2 \text{discr}(M_K)$, and $M_K$ has rank-three. By the same argument used at the end of the rank-three case, we deduce that $X$ has a Hodge-associated K3 surface.
\end{proof}

In Section 3.3 we give examples of GM fourfolds having a primitively embedded hyperbolic plane in the algebraic part of the Mukai lattice, but without a Hodge-associated K3 surface.

A consequence of Theorem \ref{thmK3U} is that the condition of having a homological associated K3 surface implies the existence of a Hodge-associated K3 surface for general GM fourfolds. This is analogous to the easy implication of \cite[Theorem 1.1]{AT}.

\begin{thm}
\label{thmHomHodge}
Let $X$ be a GM fourfold such that $\A_X$ is equivalent to the derived category of a K3 surface $S$. Under the hypothesis of the second part of Theorem \ref{thmK3U}, the GM fourfold $X$ has discriminant $d$ with $d$ satisfying $(\ast\ast)$.
\end{thm}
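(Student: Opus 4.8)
The statement is essentially a corollary of Theorem \ref{thmK3U}: the plan is to show that the homological hypothesis $\A_X \simeq \D(S)$ forces a copy of the hyperbolic plane into $N(\A_X)$, and then to invoke the converse direction of that theorem.

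First I would use the fact, recalled in Section 3.1, that any equivalence $\A_X \xrightarrow{\sim} \D(S)$ induces an isometry of weight-two Hodge structures $\KT(-1) \cong K(S)_{\text{top}} \cong \tilde{H}(S,\Z)$. Since this is an isometry of integral Hodge structures, it carries the lattice $\KT$ onto $K(S)_{\text{top}}$ and the $(1,1)$-part onto the $(1,1)$-part, hence it restricts to an isometry between $N(\A_X)$ and the algebraic part of the Mukai lattice of $S$.

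Next I would observe that the algebraic part of the Mukai lattice of $S$ always contains a copy of the hyperbolic plane: under the identification $K(S)_{\text{top}} \cong \tilde{H}(S,\Z) = H^0(S,\Z) \oplus H^2(S,\Z) \oplus H^4(S,\Z)$ given by the Mukai vector, the sublattice $H^0(S,\Z) \oplus H^4(S,\Z)$ is unimodular of signature $(1,1)$, hence isomorphic to $U$, and it is spanned by the algebraic classes $v(\mathcal{O}_S)$ and $v(\mathcal{O}_x)$ for $x \in S$ a point. Combining with the previous paragraph, $N(\A_X)$ contains a copy of $U$; since $N(\A_X)$ then contains both the negative definite lattice $\langle \lambda_1,\lambda_2 \rangle$ and the indefinite lattice $U$, it has rank at least three, so $X$ is in particular Hodge-special and Theorem \ref{thmK3U} applies.

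Finally, under either hypothesis of the second part of Theorem \ref{thmK3U} — $X$ general, or the existence of an element $\tau$ in this hyperbolic plane with $\langle \lambda_1,\lambda_2,\tau \rangle$ of discriminant $\equiv 2$ or $4 \pmod 8$ — that theorem produces a Hodge-associated K3 surface for $X$, which by \cite[Proposition 6.5]{DIM} (see Section 2.3) is equivalent to $X$ having discriminant $d$ satisfying $(\ast\ast)$. I do not expect a genuine obstacle here: the statement is formal once Theorem \ref{thmK3U} is in hand. The only points requiring care are the compatibility of the derived equivalence with the integral lattices and Hodge structures on topological K-theory, so that the algebraic hyperbolic plane of $S$ really descends into $N(\A_X)$, and the sign bookkeeping between $\KT$ and $\KT(-1)$, which is harmless because $U \cong U(-1)$.
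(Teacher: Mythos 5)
Your argument is correct and is essentially the paper's own proof: both exhibit a hyperbolic plane in the algebraic part of the Mukai lattice of $S$ spanned by Mukai vectors of sheaves supported at a point and in degree zero (the paper uses $[\mathcal{O}_x]$ and $[I_x]$, you use $v(\mathcal{O}_x)$ and $v(\mathcal{O}_S)$, which span the same sublattice $H^0\oplus H^4$), transport it to $N(\A_X)$ via the Hodge isometry $\KT(-1)\cong K(S)_{\text{top}}$, and then apply the second part of Theorem \ref{thmK3U}. No changes needed.
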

\begin{proof}
Assume that there is an equivalence $\Phi: \A_X \xrightarrow{\sim} \D(S)$ where $S$ is a K3 surface. We observe that $K(S)_{\text{num}}$ contains a copy of the hyperbolic plane spanned by the classes of the structure sheaf of a point and the ideal sheaf of a point. Since $\Phi$ induces an isometry of Hodge structures $\KT(-1) \cong K(S)_{\text{top}}$, it follows that $U$ is contained in $N(\A_X)$. Applying Theorem \ref{thmK3U}, we deduce the proof of the result. 
\end{proof}

In the last part of this section we show that period points of Hodge-special GM fourfolds with an associated twisted K3 surface are organized in a countable union of divisors determined by the value of the discriminant. The argument essentially follows \cite[Section 2]{Huy}. To this end, given a GM fourfold $X$, we consider the Mukai lattice $\KT(-1)$ with the weight-two Hodge structure and Euler pairing with reversed sign. Accordingly, the local period domain is given by
$$\Omega:= \lbrace w \in \p(I_{2,0}(2)^{\perp} \otimes \C): w \cdot w =0, w \cdot \bar{w}>0 \rbrace$$
changing the sign of the definition in $\eqref{locperiodom}$ and identifying $\Lambda=I_{2,0}(2)^{\perp}$.  We set $\mathcal{Q}=\lbrace x \in \p(\tilde{\Lambda}\otimes \C): x^2=0, (x.\bar{x})>0 \rbrace$ and we consider the canonical embedding of $\Omega$ in $\mathcal{Q}$.   

We recall that a point $x$ of $\mathcal{Q}$ is of \emph{K3 type} (resp.\ \emph{twisted K3 type}) if there exists a K3 surface $S$ (resp.\ a twisted K3 surface $(S,\alpha)$) such that $\tilde{\Lambda}$ with the Hodge structure defined by $x$ is Hodge isometric to $\tilde{H}(S,\Z)$ (resp.\ $\tilde{H}(S,\alpha,\Z)$) (see \cite[Definition 2.5]{Huy}). By \cite[Lemma 2.6]{Huy}, a point $x \in \mathcal{Q}$ is of K3 type (resp.\ of twisted K3 type) if and only if there exists a primitive embedding of $U$ (resp.\ an embedding of $U(n)$) in the $(1,1)$-part of the Hodge structure defined by $x$ on $\tilde{\Lambda}$. We denote by $\mathcal{D}_{\text{K3}}$ (resp.\ $\mathcal{D}_{\text{K3}'}$) the set of points of $\Omega$ of K3 type (resp.\ of twisted K3 type). 

\begin{dfn}
\label{deftwist}
A GM fourfold $X$ has an associated twisted K3 surface if the period point $p(X)$ comes from a point in $\mathcal{D}_{\text{K3}'}$.
\end{dfn}

\begin{rmk}
Notice that if $X$ has a Hodge-associated K3 surface, then it corresponds to a point $x$ of K3 type. In fact, it follows from the first part of Theorem \ref{thmK3U} and \cite[Lemma 2.6]{Huy}. Moreover, the converse holds for general Hodge-special GM fourfolds and for GM fourfolds satisfying condition 2 in Theorem \ref{thmK3U}. On the other hand, in Section 3.3 we see that a GM fourfold with period point of K3 type does not necessarily have a Hodge-associated K3 surface. 
\end{rmk}

\begin{proof}[Proof of Theorem \ref{thm3intro}]
The proof is analogous to that of \cite[Proposition 2.10]{Huy}. As done in \cite[Proposition 2.8]{Huy}, we have
$$\mathcal{D}_{\text{K3}'}=\Omega \cap \bigcup\limits_{\substack{0 \neq \varepsilon \in \tilde{\Lambda} \\\chi(\varepsilon,\varepsilon)=0}}
\varepsilon^{\perp}.$$
Assume that $x$ is a twisted K3 type point. By the previous observation, there exists an isotropic non trivial element $\varepsilon$ in $\tilde{\Lambda}$. We consider the lattice $\langle \lambda_1, \lambda_2, \varepsilon \rangle$ in $\tilde{\Lambda}$, with Euler pairing given by
$$
\begin{pmatrix}
-2 & 0 & x \\ 
0 & -2 & y \\ 
x & y & 0
\end{pmatrix}. 
$$
Notice that $\langle \lambda_1, \lambda_2, \varepsilon \rangle$ has discriminant $2x^2+2y^2$, which satisfies condition $(\ast\ast')$. Then, let $M_K$ be the saturation of $\langle \lambda_1, \lambda_2, \varepsilon \rangle$ in $\tilde{\Lambda}$. If $d$ is the discriminant of $M_K$ and $i$ is the index of $\langle \lambda_1, \lambda_2, \varepsilon \rangle$ in its saturation, then we have
$$2x^2+2y^2=i^2d.$$
It follows that also $d$ verifies condition $(\ast\ast')$, as we wanted. 

The other implication of the statement is proved following the same argument in the opposite direction.  
\end{proof} 

\subsection{Extending Theorem \ref{thmK3U}: a counterexample}

In this section we show that there are examples of GM fourfolds having a primitively embedded hyperbolic plane in the Néron-Severi lattice, but which cannot have a Hodge-associated K3 surface. Consistently with Theorem \ref{thmK3U}, our examples have $\text{rank}(N(\A_X))=4$ and their period points belong only to divisors corresponding to discriminants $\equiv 0 \pmod 8$.

Assume that $X$ is a GM fourfold such that $N(\A_X)=\langle \lambda_1,\lambda_2,\tau_1,\tau_2 \rangle$ with Euler form given by
\begin{equation}
\label{retcountex}
\begin{pmatrix}
 2 & 0 & 0 & 0 \\ 
 0 & 2 & 0 & 0 \\ 
 0 & 0 & -4 & -1-2n \\ 
 0 & 0 & -1-2n & -2(1+n^2)
 \end{pmatrix}
\end{equation} 
(here we consider the Mukai lattice $\KT(-1)$ with weight-two Hodge structure and quadratic form with reversed sign) with $n \in \N$. The matrix \eqref{retcountex} has signature $(2,2)$, compatibly with the Hodge Index Theorem. 

Notice that the classes
$$\kappa_1:=\lambda_1+\lambda_2+\tau_1 \quad \text{and} \quad \kappa_2:=\lambda_1+n\lambda_2+\tau_2$$
span a copy of $U$ in $N(\A_X)$.

However, it is easy to see that every labelling of $X$ will have discriminant congruent to $0$ modulo $8$; hence, we cannot find a labelling with discriminant satisfying $(\ast\ast)$. It follows that $X$ cannot have a Hodge-associated K3 surface.

It remains to prove that such a GM fourfold exists. The issue is that there is only a conjectural description of the image of the period map of GM fourfolds as the complement of some divisors in the period domain (see \cite[Question 9.1]{DIM}). As a consequence, the existence of GM fourfolds satisfying the above conditions is not a priori guaranteed. Anyway, we don't need such a strong result in order to prove that there is at least a value of $n$ and a GM fourfold with the required properties, as shown in the rest of this section. The key points are that we can reconstruct a GM fourfold from a given smooth double EPW sextic and that the union over $n$ of the sets of period points whose algebraic part contains a lattice as in \eqref{retcountex} is dense in a divisor with discriminant $16$.    

To this end, we firstly need the next lemma, where we study the conditions on $n$ in order to avoid the divisor $\mathcal{D}_8$. The motivation will be clear later, but essentially it comes from the fact that $\mathcal{D}_8$ contains period points of nodal GM fourfolds by \cite[Section 7.6]{DIM}, which we want to avoid as they are not smooth.

\begin{lemma}    
\label{no8}
The period point of a GM fourfold $X$ with $N(\A_X)$ as in \eqref{retcountex} is not in $\mathcal{D}_8$ if and only if $n \neq 0, 1$.
\end{lemma}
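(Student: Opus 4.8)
The plan is to translate the condition ``$p(X) \in \mathcal{D}_8$'' into a lattice-theoretic statement about $N(\A_X)$ via Corollary \ref{correticoli}, and then to decide, for the explicit matrix \eqref{retcountex}, for which $n$ a rank-three primitive sublattice $M_K \subseteq N(\A_X)$ with $\langle \lambda_1,\lambda_2 \rangle \subseteq M_K$ and discriminant $8$ exists. So first I would recall that $p(X)$ lies in $\mathcal{D}_8$ iff such an $M_K$ of discriminant $8$ exists. Since $\mathrm{rank}\,N(\A_X)=4$, every such $M_K$ is the saturation of $\langle \lambda_1,\lambda_2, x\tau_1+y\tau_2\rangle$ for some $(x,y)\neq(0,0)$ (up to the ambiguity that a primitive vector of the rank-two complement $\langle \tau_1,\tau_2\rangle$ could be replaced by any representative, but all sublattices containing $\langle\lambda_1,\lambda_2\rangle$ arise this way after saturation). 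Thus I need to understand the quadratic form $Q(x,y) = \mathrm{disc}\langle \lambda_1,\lambda_2, x\tau_1+y\tau_2\rangle$ and the index $i$ of this lattice in its saturation, because $Q(x,y) = i^2 \cdot \mathrm{disc}(M_K)$.

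Next I would compute $Q(x,y)$ explicitly from \eqref{retcountex}. Since $\langle \lambda_1,\lambda_2\rangle$ is the diagonal form $\mathrm{diag}(2,2)$ and is orthogonal to $\langle \tau_1,\tau_2\rangle$, the $3\times 3$ determinant splits and one gets $Q(x,y) = 4 \cdot q_{\tau}(x,y)$, where $q_\tau(x,y) = -\mathrm{disc}$ of the restriction of the form $\begin{pmatrix}-4 & -1-2n\\ -1-2n & -2(1+n^2)\end{pmatrix}$ to the vector $(x,y)$; concretely $q_\tau(x,y) = 4x^2 + 2(1+2n)xy + 2(1+n^2)y^2$ up to sign conventions, so $Q(x,y) = 16x^2 + 8(1+2n)xy + 8(1+n^2)y^2$. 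For $p(X)$ to be in $\mathcal{D}_8$ we need $Q(x,y)/i^2 = 8$ for some primitive $(x,y)$, i.e.\ $2x^2 + (1+2n)xy + (1+n^2)y^2 = i^2$ for the appropriate index $i$ — and here the main point is to pin down $i$. The saturation index is controlled by divisibility of the vector $x\tau_1 + y\tau_2$ and of its pairings; since $(x,y)$ is primitive in $\langle \tau_1,\tau_2\rangle$, the index $i$ of $\langle \lambda_1,\lambda_2, x\tau_1+y\tau_2\rangle$ in its saturation inside $N(\A_X)$ is the largest $i$ such that $(x\tau_1+y\tau_2)/i'$ paired against everything lands in $i'\Z$ for $i'\mid i$ — in practice one checks that $i \in \{1,2\}$ by looking at \eqref{retcountex} mod small primes, so the equation to solve is either $2x^2+(1+2n)xy+(1+n^2)y^2 = 1$ or $= 4$.

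Then I would solve these Pell-like equations. The binary quadratic form $g_n(x,y) := 2x^2 + (1+2n)xy + (1+n^2)y^2$ has discriminant $(1+2n)^2 - 8(1+n^2) = -4n^2 + 4n - 7 < 0$, so $g_n$ is positive definite, and the question ``does $g_n$ represent $1$ (resp.\ $4$)?'' is a finite check. For $n=0$: $g_0 = 2x^2 + xy + y^2$, and $g_0(0,1) = 1$, giving discriminant $8$, so $p(X) \in \mathcal{D}_8$. For $n=1$: $g_1 = 2x^2 + 3xy + 2y^2$, and $g_1(1,-1) = 2-3+2 = 1$, again giving $\mathcal{D}_8$. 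For $n \geq 2$ one shows $g_n$ represents neither $1$ nor $4$ with a primitive vector yielding the right index: the minimum of $g_n$ over nonzero integer vectors grows (the minimum of a positive-definite form of discriminant $D$ is $\geq \sqrt{|D|/3}$, which exceeds $4$ for $n$ large, and the small remaining values of $n$ are checked directly), so no valid $M_K$ of discriminant $8$ exists and $p(X) \notin \mathcal{D}_8$.

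The main obstacle I anticipate is the bookkeeping around the saturation index $i$: one must be careful that a discriminant-$8$ sublattice could arise either as an already-saturated $\langle \lambda_1,\lambda_2, x\tau_1+y\tau_2\rangle$ (index $1$) or as a proper overlattice (index $2$, since $\mathrm{disc}\langle\lambda_1,\lambda_2,\tau_1,\tau_2\rangle$ and the shape of \eqref{retcountex} bound the possible indices), and these two cases must both be ruled in or out; conflating them would give the wrong answer. Once the index analysis is clean, everything reduces to elementary facts about positive-definite binary quadratic forms.
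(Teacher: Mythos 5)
Your overall strategy is the same as the paper's: use Corollary \ref{correticoli} to translate ``$p(X)\in\mathcal{D}_8$'' into the representability of $1$ (times a square) by the binary form $g_n(x,y)=2x^2+(1+2n)xy+(1+n^2)y^2$, verify $g_0(0,1)=1$ and $g_1(1,-1)=1$, and rule out $n\geq 2$ by positive-definiteness. But the step that handles $n\geq 2$ is genuinely broken: the inequality you invoke is Hermite's bound written in the wrong direction. For a positive definite binary form of discriminant $D<0$, the minimum over nonzero integer vectors is \emph{at most} $\sqrt{|D|/3}$, not at least; there is no lower bound for the minimum in terms of $|D|$ (the form $x^2+Ny^2$ has arbitrarily large $|D|$ and minimum $1$). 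In fact the minimum of $g_n$ does not grow with $n$ at all: the substitution $x\mapsto x-my$ for $n=2m$ (resp.\ $x\mapsto x-(m+1)y$ for $n=2m+1$) carries $g_n$ to the reduced form $2x^2+xy+(2m^2-m+1)y^2$ (resp.\ $2x^2-xy+(2m^2+m+1)y^2$), whose minimum is exactly $2$ for every $n\geq 2$. So the ``for $n$ large'' branch of your argument never becomes available, and the residual ``finite check'' is actually an infinite one. The repair is the reduction computation just described (this is essentially what the paper does, reducing to a form with minimum $2$): since the minimum is $2>1$ for all $n\geq 2$, the form $g_n$ never represents $1$, and $p(X)\notin\mathcal{D}_8$.

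A smaller point: the saturation-index bookkeeping you flag as the main obstacle can be closed completely rather than left at ``in practice $i\in\{1,2\}$.'' For a primitive vector $(x,y)$, the sublattice $\langle\lambda_1,\lambda_2,x\tau_1+y\tau_2\rangle$ is automatically saturated in $N(\A_X)$, because the quotient is isomorphic to $(\Z\tau_1\oplus\Z\tau_2)/\Z(x\tau_1+y\tau_2)\cong\Z$, hence torsion-free; conversely, every primitive rank-three sublattice containing $\langle\lambda_1,\lambda_2\rangle$ arises this way (project to $\Z\tau_1\oplus\Z\tau_2$ and note the image must be saturated). So $i=1$ always, only the value $1$ of $g_n$ needs to be excluded, and the case $g_n=4$ never has to be considered.
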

\begin{proof}
We actually prove that $p(X)$ is in $\mathcal{D}_8$ if and only if $n=0$ or $n=1$. A lattice spanned by $\lambda_1, \lambda_2, x \tau_1+y\tau_2$ has discriminant
$$Q(x,y):=4(-4x^2-2(1+n^2)y^2-2xy(1+2n))=-8(2x^2+(1+n^2)y^2+xy(1+2n)).$$
Notice that if $n=0$ (resp.\ $n=1$), then $Q(x,y)=-8$ for $(x,y)=(0,1)$ (resp.\ $(x,y)=\pm(1,-1)$. This implies that $p(X)$ is in $\mathcal{D}_8$.

On the other hand, assume $n>1$; then $-Q(x,y)/8 \geq 2x^2+5y^2+5xy$. The reduction of the latter term is $2x^2+xy+2y^2$, thus the smallest value it represents is $2$ (see \cite{Baker}, Section 5.2). We deduce that $-Q(x,y)/8$ does not represent $1$ for $n>1$. This implies the statement. 
\end{proof} 

We now prove that there is a value of $n \in \N$ as in Lemma \ref{no8} and a Hodge structure on $\tilde{\Lambda}$ having algebraic part given by the lattice in \eqref{retcountex}, defining a period point in the image of the period map of GM fourfolds. 

Let us introduce some notation. Fix $d:=-16$ and $n>1$. We denote by $\mathcal{S}_{n}$ the set of period points in $\mathcal{D}_d$ defined by the lattice $N_{n}$ as in \eqref{retcountex}. More precisely, this is the locus in $\mathcal{D}$ coming from $\p(N_{n}^{\perp} \otimes \C) \subset \p(\tilde{\Lambda} \otimes \C)$. We set
$$\mathcal{S}:=\bigcup_{n>1} \mathcal{S}_{n}$$
and we denote by $\mathcal{S}' \subset \mathcal{S}$ the locus of period points with algebraic part of rank-four. Thus, points in $\mathcal{S}'$ are very general points of $\mathcal{S}$ and their algebraic part is equal to a lattice $N_{n}$.

\begin{lemma}
\label{speriamosiavero}
The intersection of $\mathcal{S}'$ with the image of the period map $p$ is non empty.
\end{lemma}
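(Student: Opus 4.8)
The plan is to use the surjectivity (up to the known divisors) of the period map together with the reconstruction of GM fourfolds from smooth double EPW sextics, reducing the problem to a density statement in the period domain.

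First I would recall that by \cite[Theorem 5.1]{DK2} and O'Grady's work, a smooth double EPW sextic $\tilde{Y}_A$ has a period point in $\mathcal{D}$ (via the isometry $H^2(\tilde{Y}_A,\Z)_0(-1) \cong H^4(X,\Z)_{00}$ for the associated GM fourfold), and that the local Torelli theorem plus the global description of the period map for double EPW sextics (equivalently, for hyperk\"ahler fourfolds of $K3^{[2]}$-type with this polarization) shows that the period points of smooth double EPW sextics fill up $\mathcal{D} \setminus \mathcal{D}_2$, or at worst the complement of finitely many explicitly known divisors such as $\mathcal{D}_2$, $\mathcal{D}_8$, $\mathcal{D}_{10}''$. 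Since a GM fourfold can be reconstructed from its double EPW sextic (as recalled in the introduction, via the Lagrangian data, provided $Y_A^{\ge 3}=\emptyset$), any period point in $\mathcal{S}'$ that lies in this complement will automatically be realized by a GM fourfold. Hence it suffices to show that $\mathcal{S}'$ is not contained in the bad divisors, and for this the key input is Lemma \ref{no8}, which guarantees that for $n>1$ the locus $\mathcal{S}_n$ avoids $\mathcal{D}_8$.

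Next I would argue the density/non-containment. The lattice $N_n$ has fixed discriminant $16$, so each $\mathcal{S}_n \subset \mathcal{D}_{16}$, and $\mathcal{S} = \bigcup_{n>1}\mathcal{S}_n$ is a countable union of $19$-dimensional subvarieties inside the $19$-dimensional irreducible divisor $\mathcal{D}_{16}$; one checks that as $n$ varies the embeddings $N_n \hookrightarrow \tilde{\Lambda}$ sweep out a Zariski-dense subset of $\mathcal{D}_{16}$ (each $\mathcal{S}_n$ is actually an irreducible component, or a dense subset of one, of the appropriate Noether--Lefschetz-type locus, and there are infinitely many distinct ones accumulating). Since $\mathcal{D}_{16} \not\subset \mathcal{D}_2 \cup \mathcal{D}_8 \cup \mathcal{D}_{10}'' \cup (\text{finitely many other divisors})$ — these being distinct irreducible divisors — and since $\mathcal{S}$ is dense in $\mathcal{D}_{16}$, the set $\mathcal{S}$ (hence $\mathcal{S}'$, which is dense in $\mathcal{S}$) meets the image of $p$. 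Concretely: pick $n>1$; by Lemma \ref{no8} the generic point of $\mathcal{S}_n$ is not in $\mathcal{D}_8$, and by a dimension count it is not in the finitely many other forbidden divisors either; such a point corresponds to a GM fourfold whose double EPW sextic is smooth, and by shrinking to a very general point of $\mathcal{S}_n$ we may further assume the algebraic part of its Mukai lattice is exactly $N_n$, i.e.\ the point lies in $\mathcal{S}'$.

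The main obstacle is that, as the authors themselves flag, the precise image of the period map for GM fourfolds is only conjecturally the complement of a known list of divisors (\cite[Question 9.1]{DIM}), so one cannot simply invoke a clean surjectivity statement. The resolution — and the technical heart of the argument — is to go through double EPW sextics instead: for these, by work of O'Grady and Debarre--Macrì, the relevant Torelli and surjectivity statements are available, smoothness is controlled by avoiding $\mathcal{D}_{10}''$ (and the nodal locus in $\mathcal{D}_8$), and the GM fourfold is recovered afterwards. So the real content is (i) confirming that $\mathcal{D}_{16}$ genuinely avoids all the finitely many excluded divisors — which reduces to the lattice-theoretic fact that these are pairwise distinct irreducible divisors, together with Lemma \ref{no8} for the $\mathcal{D}_8$ case — and (ii) the density of $\bigcup_{n>1}\mathcal{S}_n$ in $\mathcal{D}_{16}$, which follows because a single irreducible divisor cannot be a countable union of proper closed subvarieties, so the $\mathcal{S}_n$ (being positive-dimensional and infinite in number) must accumulate. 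Combining (i), (ii) and Lemma \ref{no8} gives a point of $\mathcal{S}' \cap \mathrm{Im}(p)$.
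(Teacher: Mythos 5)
Your overall strategy --- pass to smooth double EPW sextics, use the Debarre--Macr\`i description of the image of the period map for degree-$2$, divisibility-$1$ polarized hyperk\"ahler fourfolds, rule out $\mathcal{D}_8$ via Lemma \ref{no8}, and finish with a density argument for $\bigcup_{n}\mathcal{S}_n$ --- is the same as the paper's. But the step you lean on most heavily is the one you have not justified: you assert that the period points of \emph{smooth double EPW sextics} fill up the complement of finitely many known divisors in $\mathcal{D}$. What \cite[Theorem 6.1]{DM} gives is that the full moduli space $\mathcal{M}_2^{(1)}$ surjects onto $\mathcal{D}\setminus(\mathcal{D}_2\cup\mathcal{D}_8\cup\mathcal{D}_{10}'')$; a point outside those divisors is therefore the period of \emph{some} polarized hyperk\"ahler fourfold of this type, not a priori of a double EPW sextic. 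The paper only uses that the locus $p_2^{(1)}(\mathcal{U}_2^{(1)})$ of periods of smooth double EPW sextics is a nonempty Zariski open subset which \emph{meets} $\mathcal{D}_{16}$ --- and this last fact is not free: it is imported from \cite[Theorem 8.1]{DIM} (explicit Hodge-special GM fourfolds of every admissible discriminant). That Zariski open subset of $D^{(1)}_{2,d}$ is then caught by the Hodge loci $\bigcup_{n>1}S_{n}$, whose density the paper obtains from \cite[Proposition 5.20]{Voilibro}. Without either a genuine surjectivity statement for double EPW sextics or the input from \cite[Theorem 8.1]{DIM}, your argument does not close at exactly the point where you need the chosen period point to come from a double EPW sextic rather than from an abstract hyperk\"ahler fourfold.

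Three smaller points. First, your ``dimension count'' cannot show that $\mathcal{S}_n$ avoids the other forbidden divisors: $\mathcal{S}_n$ has dimension $18$ (not $19$), it is already contained in the divisor $\mathcal{D}_{16}$, and nothing about dimensions prevents it from lying inside another divisor; what actually rules out $\mathcal{D}_2$ and $\mathcal{D}_{10}''$ is that every rank-three labelling inside $N_n$ has discriminant divisible by $8$ (the computation of $Q(x,y)$ in Lemma \ref{no8}), while $\mathcal{D}_8$ itself is excluded by that lemma for $n>1$. Second, to reconstruct a GM fourfold from $\tilde{Y}_A$ you must also verify that the Lagrangian $A$ has no decomposable vectors (and choose $V_5$ suitably); the paper checks this by observing that otherwise the period point would lie in an excluded divisor. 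Third, your justification of the density of $\bigcup_{n}\mathcal{S}_n$ (``a divisor is not a countable union of proper closed subvarieties'') is not the right statement; the conclusion is salvageable, since infinitely many pairwise distinct irreducible codimension-one subvarieties of $\mathcal{D}_{16}$ cannot all be contained in a single proper closed subset, but the paper's appeal to the density of Hodge loci is both cleaner and stronger.
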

\begin{proof}
Using the notation of \cite{DM}, let $\mathcal{M}_2^{(1)}$ be the moduli space of (smooth) hyperk\"ahler fourfolds deformation equivalent to the Hilbert square of a K3 surface, with polarization of degree-$2$ and divisibility $1$. Their period domain is $\mathcal{D}$. By \cite[Theorem 6.1 and Example 6.3]{DM}, the image of the period map $p_2^{(1)}: \mathcal{M}_2^{(1)} \hookrightarrow \mathcal{D}$ is equal to the complement of the divisors $\mathcal{D}_2$, $\mathcal{D}_8$ and $\mathcal{D}_{10}''$. As by definition and Lemma \ref{no8}, the period points we are considering are not in these divisors, it follows that $\mathcal{S}'$ is contained in the image of the period map $p_2^{(1)}$. 


We denote by $\mathcal{U}_2^{(1)}$ the Zariski open subset of $\mathcal{M}_2^{(1)}$ parametrizing smooth double EPW sextics. By \cite[Theorem 8.1]{DIM}, the Zariski open subset $p_2^{(1)}(\mathcal{U}_2^{(1)})$ of $\mathcal{D}$ meets $\mathcal{D}_d$. As a consequence, if we set
$$D_{2,d}^{(1)}:=\mathcal{D}_d \cap p_2^{(1)}(\mathcal{M}_2^{(1)}),$$
which is a hypersurface in $p_2^{(1)}(\mathcal{M}_2^{(1)})$, then
$$U^{(1)}_{2,d}:=D_{2,d}^{(1)} \cap p_2^{(1)}(\mathcal{U}_2^{(1)}) \neq \emptyset.$$
Moreover, $U^{(1)}_{2,d}$ is a Zariski open set in $D_{2,d}^{(1)}$.

We claim that
$$U^{(1)}_{2,d} \cap \bigcup_{n>1} \mathcal{S}_{n} \neq \emptyset.$$
Indeed, we set 
$$S_{n}:= \mathcal{S}_{n} \cap D_{2,d}^{(1)} \subset D_{2,d}^{(1)};$$
the union $\bigcup_{n>1} S_{n}$ of these Hodge loci is dense in $D_{2,d}^{(1)}$ (with respect to the Euclidean topology) by \cite[Proposition 5.20]{Voilibro}. As $U^{(1)}_{2,d}$ is Zariski open in $D_{2,d}^{(1)}$, we deduce the claim (the same argument is used in \cite{AHTV}, at the end of the proof of Theorem 1). Thus, there exists $n$ such that
$$U^{(1)}_{2,d} \cap \mathcal{S}_{n} \neq \emptyset.$$
As the set above is Zariski open in $\mathcal{S}_{n}$, it contains a very general point of $\mathcal{S}_{n}$, which belongs to $\mathcal{S}'$. It follows that 
$$p_2^{(1)}(\mathcal{U}_2^{(1)}) \cap \mathcal{S}' \neq \emptyset.$$

For every $x \in p_2^{(1)}(\mathcal{U}_2^{(1)}) \cap \mathcal{S}'$, we denote by $\tilde{Y}_A$ a smooth double EPW sextic such that $p_2^{(1)}([\tilde{Y}_A])=x$. Notice that the corresponding Lagrangian subspace $A$ has no decomposable vectors, because otherwise the period point of $\tilde{Y}_A$ would have been in $\mathcal{D}_8$ by \cite[Remark 5.29]{DK2}, which is not possible.

Finally, we observe that there exists a GM fourfold $X$ such that its associated double EPW sextic is precisely $\tilde{Y}_A$. Indeed, $\tilde{Y}_A$ determines a six-dimensional $\C$-vector space $V_6$ and a Lagrangian subspace $A \subset \bigwedge^3V_6$ without decomposable vectors. The choice of a five-dimensional subspace $V_5 \subset V_6$ with $A \cap \bigwedge^3V_5$ of the right dimension defines a Lagrangian data, which by \cite[Theorem 3.10, Proposition 3.13, and Theorem 3.16]{DK1} determines a GM fourfold $X$, as we wanted.   
\end{proof}

Applying Lemma \ref{speriamosiavero}, we deduce that there are GM fourfolds $X$ with $U \subset N(\A_X)$, but without a Hodge-associated K3 surface. 

\begin{rmk}
More generally, one can consider two integers $k$ and $l$ such that $(k,l) \neq (1,0), (0,1)$, $m, n \in \Z$ and the lattices $N_{k,l,m,n}:=\langle \lambda_1,\lambda_2,\tau_1,\tau_2 \rangle$ with Euler form given by
\begin{equation}
\label{retcountexgeneral}
\begin{pmatrix}
 2 & 0 & 0 & 0 \\ 
 0 & 2 & 0 & 0 \\ 
 0 & 0 & -2(k^2+l^2) & 1-2km-2ln \\ 
 0 & 0 & 1-2km-2ln & -2(m^2+n^2)
 \end{pmatrix}.
\end{equation}
It is possible to show that there are infinite values of $k, l, m, n$ such that the matrix $N_{k,l,m,n}$ has the right signature and determines a period point not in $\mathcal{D}_8$. Then, the argument in Lemma \ref{speriamosiavero} applies in this situation fixing $d=-8(k^2+l^2)$. Notice that the matrix in \eqref{retcountex} is precisely $N_{1,1,1,n}$. 

Again, the classes
$$\kappa_1:=k\lambda_1+l\lambda_2+\tau_1 \quad \text{and} \quad \kappa_2:=m\lambda_1+n\lambda_2+\tau_2$$
span a copy of $U$ in $N(\A_X)$. In the basis $\lambda_1,\lambda_2,\kappa_1,\kappa_2$, the Euler form is represented by
\begin{equation}
\label{matrixrmqpedante}
\begin{pmatrix}
 2 & 0 & 2k & 2m \\ 
 0 & 2 & 2l & 2n \\ 
 2k & 2l & 0 & 1 \\ 
 2m & 2n & 1 & 0
 \end{pmatrix}.
\end{equation}

Now, we claim that for every $\tau=a \kappa_1+b \kappa_2$ with $a, b \in \Z$, the lattice $\langle \lambda_1,\lambda_2,\tau \rangle$ has discriminant $\equiv 0 \pmod 8$. Indeed, it is enough to notice that $\chi(\tau,\tau)$, $\chi(\lambda_1,\tau)$ and $\chi(\lambda_2,\tau)$ are even. 

From the above observation, we deduce that the lattices $N_{k,l,m,n}$ as in \eqref{retcountexgeneral} represent all the possible intersection matrices of $N(\A_X)$ in the rank-four case that do not satisfy the condition in item 2 of Theorem \ref{thmK3U}. Indeed, there is not a class in the embedded $U$ which gives rise to a labelling of discriminant $\equiv 2, 4 \pmod 8$.
\end{rmk}

\begin{rmk}
We have proved that there are divisors of the form $\mathcal{D}_{8t}$, whose elements cannot have a Hodge-associated K3 surface as $(\ast \ast)$ does not hold, but containing period points of GM fourfolds $X$ with $U \subset N(\A_X)$. In particular, the latter property does not hold for all the elements in $\mathcal{D}_{8t}$. It follows that the condition of having an embedded $U$ in $N(\A_X)$ is not divisorial, in contrast to what happens for cubic fourfolds.
\end{rmk}

\section{Associated double EPW sextic}

The aim of this section is to prove Theorems \ref{thm1}, \ref{thm4} and \ref{thm2} stated in the introduction. We follow the argument of \cite{Add} and of \cite{Huy} for the twisted case; in particular, we define a Markman embedding for $H^2(\tilde{Y}_A,\Z)$ in $\tilde{\Lambda}$ and we apply Propositions 4 and 5 of \cite{Add}. 

\subsection{Proof of Theorem \ref{thm1} and \ref{thm4}}

Assume that $X$ is a GM fourfold with Lagrangian data $(V_6,V_5,A)$ such that $\tilde{Y}_A$ is smooth. Before starting with the proofs, we need the following lemma, which relates the sublattice $\langle \lambda_1 \rangle^{\perp}$ of $\KT$ (equipped with the induced Hodge structure) and $H^2(\tilde{Y}_A,\Z)$.   

\begin{lemma}
\label{mukaivsprim}
There exists an isometry of Hodge structures between the lattices $\langle \lambda_1 \rangle^{\perp} \subset K(\A_X)_{\emph{top}}$ and $H^2(\tilde{Y}_A,\Z)(1)$.
\end{lemma}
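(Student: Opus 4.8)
The plan is to build the isometry by relating it to the already-established isometry $v\colon \langle \lambda_1,\lambda_2\rangle^{\perp} \cong H^4(X,\Z)_{00}(2)$ of Proposition \ref{mukaivsvan} and to the isometry of Hodge structures $H^4(X,\Z)_{00} \cong H^2(\tilde{Y}_A,\Z)_0(-1)$ of Theorem \ref{periodEPW}. Concretely, I would first observe that $\langle \lambda_1 \rangle^{\perp}$ is a lattice of rank $23$ and signature $(21,2)$ inside $\KT \cong U^4 \oplus E_8^2$, and that since $\langle \lambda_1 \rangle = I_{0,1}(2) = A_1(-1)$, its orthogonal complement in the unimodular lattice $\KT$ has discriminant group $\Z/2\Z$. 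On the other side, $H^2(\tilde{Y}_A,\Z)$ with the Beauville–Bogomolov–Fujiki form is the standard lattice $U^3 \oplus E_8(-1)^2 \oplus \langle -2\rangle$ for a double EPW sextic (deformation equivalent to $S^{[2]}$), which after the Tate twist $(1)$ has signature $(21,2)$ and discriminant group $\Z/2\Z$ as well. By Nikulin's uniqueness theorem (\cite[Theorem 1.14.2]{Ni}), since both lattices are indefinite, have the same rank, signature and discriminant form, they lie in the same genus which contains a single class, so they are abstractly isometric.

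The substance of the lemma, though, is the compatibility with the Hodge structures, so the next step is to produce the isometry \emph{canonically} rather than abstractly. I would do this by splitting off $\lambda_2$: inside $\langle \lambda_1 \rangle^{\perp}$ the class $\lambda_2$ has square $-2$ and spans a primitive negative-definite rank-one sublattice, with orthogonal complement $\langle \lambda_1,\lambda_2\rangle^{\perp}$. Via $v$ and Theorem \ref{periodEPW} this complement is Hodge-isometric to $H^2(\tilde Y_A,\Z)_0(-1)(2)$, i.e. (after untwisting) to the primitive cohomology $H^2(\tilde Y_A,\Z)_0$ up to the overall scaling that disappears once we are careful with the $(-1)$ versus $(2)$ bookkeeping — the key point being that $\langle \lambda_1,\lambda_2\rangle^{\perp}$ carries the Hodge structure of $H^4(X)_{00}$, which is exactly that of $H^2(\tilde Y_A)_0(-1)$. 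On the geometric side, $H^2(\tilde Y_A,\Z)$ is the overlattice of $H^2(\tilde Y_A,\Z)_0 \oplus \Z\delta$, where $\delta$ (half the class of the exceptional/branch divisor, or rather the degree-$2$ polarization class) has square $-2$ and pairs nontrivially with $H^2(\tilde Y_A,\Z)_0$ only through the glue vectors in the discriminant group. So both $\langle \lambda_1\rangle^{\perp}$ and $H^2(\tilde Y_A,\Z)(1)$ are realized as the same overlattice construction: a rank-one $\langle -2\rangle$ summand glued along $(\Z/2\Z)^2 \to \Z/2\Z$ to the fixed Hodge lattice $H^4(X)_{00}$, with the Hodge structure concentrated entirely in the $H^4(X)_{00}$-part. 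I would then check that the gluing data match — i.e. that the image of $\lambda_2$ in the discriminant group of $\langle \lambda_1,\lambda_2\rangle^\perp$ is matched under $v$ and Theorem \ref{periodEPW} with the image of $\delta$ in the discriminant group of $H^2(\tilde Y_A,\Z)_0$ — which, because the relevant discriminant group is just $\Z/2\Z$, is a finite and essentially forced verification using that both overlattices are even of the prescribed type (again invoking \cite[Theorem 1.14.2]{Ni} / \cite[Corollary 1.6.2]{Ni} on uniqueness).

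I expect the main obstacle to be precisely this gluing-compatibility bookkeeping: tracking the three twists ($(1)$ on $\tilde\Lambda$, $(2)$ in Proposition \ref{mukaivsvan}, $(-1)$ in Theorem \ref{periodEPW}) simultaneously without sign or scaling errors, and confirming that $c_2(\lambda_2) = -\gamma_X^*\sigma_{1,1}$ (computed in Proposition \ref{c2quoz}) maps under the chain of isometries to the polarization class $\delta$ of $\tilde Y_A$ with the correct discriminant-group image, so that the overlattices — and hence their Hodge structures — genuinely coincide rather than merely being isometric as lattices. A clean way to sidestep part of this would be to invoke Lemma \ref{lemmauguale} and Corollary \ref{correticoli}: the polarization of $\tilde Y_A$ corresponds to a labelling, hence to a rank-one extension $M_K = \langle \lambda_1,\lambda_2,\tau\rangle$ of $\langle\lambda_1,\lambda_2\rangle$ with the right discriminant, and then $\langle\lambda_1\rangle^\perp$ is recovered as $M_K^\perp \oplus \langle \text{its glue}\rangle$ inside $\KT$; comparing with the analogous description of $H^2(\tilde Y_A,\Z)(1)$ and using uniqueness of primitive embeddings (\cite[Theorem 1.14.4]{Ni}, as already used for $i\colon I_{0,2}(2)\hookrightarrow\tilde\Lambda(-1)$) gives the isometry together with Hodge-compatibility for free, since the Hodge structure on both sides lives on the common primitive piece $\langle\lambda_1,\lambda_2\rangle^\perp \cong H^4(X,\Z)_{00}(2)$.
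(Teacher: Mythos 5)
Your strategy is the same as the paper's: realize $\langle\lambda_1\rangle^{\perp}$ as a finite-index overlattice of $\langle\lambda_1,\lambda_2\rangle^{\perp}\oplus\langle\lambda_2\rangle$ and $H^2(\tilde{Y}_A,\Z)(1)$ as the corresponding overlattice of $H^2(\tilde{Y}_A,\Z)_0(1)\oplus\langle h_A\rangle$, put the Hodge isometry $f$ (Proposition \ref{mukaivsvan} composed with Theorem \ref{periodEPW}) on the primitive summands, send $\lambda_2\mapsto h_A$ on the rank-one summands, and extend to the overlattices; Hodge compatibility is then automatic because $\lambda_2$ and $h_A$ are algebraic classes. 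You also correctly identify the extension/gluing step as the crux.

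The gap is that you then declare this step ``finite and essentially forced'' because ``the relevant discriminant group is just $\Z/2\Z$''. It is not: the discriminant group of $\langle\lambda_1\rangle^{\perp}$ is $\Z/2\Z$, but the gluing happens along the discriminant groups of the summands, and $d(\langle\lambda_1,\lambda_2\rangle^{\perp})\cong(\Z/2\Z)^2$. By Nikulin's Corollary 1.5.2 the isometry $f\oplus(\lambda_2\mapsto h_A)$ extends if and only if the induced map $\bar f$ on $(\Z/2\Z)^2$ carries the glue subgroup on one side to the glue subgroup on the other, compatibly with the (trivial) action on $d(\langle h_A\rangle)$. This requires first pinning down the glue subgroups --- the paper rules out $(1,1,1)$ by isotropy and $(1,0,1)$ by the constraint $d(L)\cong H^{\perp}/H$ from Proposition 1.4.1(b) --- and then confronting the fact that $\bar f$ may act on $(\Z/2\Z)^2$ either as the identity or by swapping the two generators; in the swap case the naive extension fails and the paper must first change the marking $\phi$ (exchanging the images of $\lambda_1$ and $\lambda_2$) before Corollary 1.5.2 applies. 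That case analysis is the real content of the proof and is not forced by parity considerations. Your proposed shortcut via uniqueness of primitive embeddings has the same defect in another guise: Theorem 1.14.4 produces \emph{some} isometry $\langle\lambda_1\rangle^{\perp}\cong H^2(\tilde{Y}_A,\Z)(1)$ taking $\lambda_2$ to $h_A$, but not one that restricts to the given Hodge isometry $f$ on $\langle\lambda_1,\lambda_2\rangle^{\perp}$, so Hodge compatibility is not obtained for free. Two smaller slips: the common signature is $(20,3)$, not $(21,2)$; and the class to be glued onto $H^2(\tilde{Y}_A,\Z)_0(1)$ is the polarization $h_A$ (with $q(h_A)=-2$ after the twist), not the exceptional class $\delta$, whose orthogonal complement is $H^2(S,\Z)$ rather than the primitive cohomology.
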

\begin{proof}
Composing the isometry of Proposition \ref{mukaivsvan} with that of Theorem \ref{periodEPW}, we obtain the Hodge isometry
$$f: \langle \lambda_1,\lambda_2 \rangle^{\perp} \cong H^2(\tilde{Y}_A,\Z)_0(1).$$
Notice that twisting by $1$, we have shifted by two the weight of the Hodge structure on the primitive cohomology and we have reversed the sign of $q$; in particular, $f$ is an isometry of weight zero Hodge structures.  

Now, we observe that $\langle \lambda_1 \rangle^{\perp}$ is isomorphic to $E_8^2 \oplus U^3 \oplus \Z(u_1+v_1)$ via the marking $\phi$ defined in \eqref{isophi}. On the other hand, by \cite[Proposition 6]{Beau}, we have the isometry 
$$H^2(\tilde{Y}_A,\Z) \cong H^2(S,\Z) \oplus \Z\delta \cong E_8(-1)^2 \oplus U^3 \oplus I_{0,1}(2),$$ 
where $S$ is a degree-two K3 surface and $q(\delta)=-2$. Twisting by $1$, we get
$$\psi: H^2(\tilde{Y}_A,\Z)(1) \cong E_8^2 \oplus U^3 \oplus \Z\delta, \quad \text{with }q(\delta)=2,$$ 
using that $U(-1)\cong U$. In particular, $\langle \lambda_1 \rangle^{\perp}$ and $H^2(\tilde{Y}_A,\Z)(1)$ are isomorphic lattices. 

Let $h_A$ be the polarization class on $\tilde{Y}_A$ with satisfies $q(h_A)=-2$ in $H^2(\tilde{Y}_A,\Z)(1)$ (see \cite{OG2}, eq.\ (1.3)). 
We define an isometry $g: \langle \lambda_1, \lambda_2 \rangle^{\perp} \oplus \langle \lambda_2 \rangle \cong H^2(\tilde{Y}_A,\Z)_0(1) \oplus \langle h_A \rangle $ such that 
$$g(\lambda_2)=h_A \quad  \text{and} \quad g(\langle \lambda_1,\lambda_2 \rangle^{\perp})= f(\langle \lambda_1,\lambda_2 \rangle^{\perp})=H^2(\tilde{Y}_A,\Z)_0(1).$$
Notice that $g$ preserves the Hodge structures, because $f$ does and $g$ sends the $(0,0)$ class $\lambda_2$ to the $(0,0)$ class $h_A$. In particular, $g$ defines an isomorphism of Hodge structures $\langle \lambda_1 \rangle^{\perp} \cong H^2(\tilde{Y}_A,\Q)(1)$ over $\Q$. 

We claim that $g$ extends to an isometry $\langle \lambda_1 \rangle^{\perp} \cong H^2(\tilde{Y}_A,\Z)(1)$ over $\Z$. Indeed, we set $S_1:=H^2(\tilde{Y}_A,\Z)_0(1)$, $S_2:=\langle \lambda_1,\lambda_2 \rangle^{\perp}$ and $L:=H^2(\tilde{Y}_A,\Z)(1)$. We denote by $K_1$ and $K_2$ the orthogonal complements of $S_1$ and $S_2$ in $L$. By definition, we have  $K_1\cong \langle h_A \rangle$ and $K_2 \cong \langle \lambda_2 \rangle$. 
We set 
$$H_1:=\frac{L}{S_1 \oplus K_1} \subset d(S_1) \oplus d(K_1) \quad \text{and} \quad H_2:=\frac{L}{S_2 \oplus K_2} \subset d(S_2) \oplus d(K_2);$$
recall that
$$d(S_i) \cong \frac{\Z}{2\mathbb{Z}} \oplus \frac{\Z}{2\mathbb{Z}} \quad \text{ and } \quad d(K_i) \cong \frac{\Z}{2\mathbb{Z}}.$$
Let $H_{i,S}$ and $H_{i,K}$ be the projections of $H_i$ in $d(S_i)$ and $d(K_i)$, respectively. Then, there is an isomorphism $\gamma_i: H_{i,S} \cong H_{i,K}$, given by the composition of the inverse of the projection on the first factor with the projection to the second factor. By definition $H_{i,K}$ is a subgroup of $d(K_i) \cong \Z/2\mathbb{Z}$. We exclude the case $H_{i,K}=0$. Then we have $H_{i,K}=d(K_i)$. We list the generators of the subgroups of $d(S_i) \oplus d(K_i)$ mapping to $\Z/2\mathbb{Z}$ via the two projections:
$$(1,0,1), (0,1,1) \text{ and } (1,1,1).$$ 
Since $H_i$ is isotropic with respect to $q:=q_{S_i} \oplus q_{K_i}$, we exclude $(1,1,1)$, because
$$q((1,1,1))= \frac{1}{2}+\frac{1}{2}-\frac{1}{2}=-\frac{1}{2} \neq 0 \mod \mathbb{Z}.$$
Moreover, recall that by \cite[Proposition 1.4.1(b)]{Ni}, we have 
$$d(L)\cong \frac{H_i^{\perp}}{H_i},$$
where $H_i^{\perp}$ is the orthogonal to $H_i$ in $d(S_i) \oplus d(K_i)$. This condition implies that $H_i=\langle (0,1,1) \rangle$. Indeed, assume that $H_i=\langle (1,0,1) \rangle$. Writing explicitely the generators of the discriminant groups we have
$$d(K_i)=\langle \frac{f_2}{2} \rangle, \quad d(S_i)=\langle \frac{g_1}{2},\frac{g_2}{2} \rangle, \quad d(L)=\langle \frac{g_1}{2} \rangle, \quad H_i=\langle \frac{g_1}{2}+\frac{f_2}{2} \rangle.$$
However, we have
$$\frac{H_i^{\perp}}{H_i}=\frac{\langle \frac{g_1}{2}+\frac{f_2}{2},\frac{g_2}{2} \rangle}{\langle \frac{g_1}{2}+\frac{f_2}{2} \rangle}=\langle \frac{g_2}{2} \rangle$$
giving a contradiction.

Now, recall that by \cite[Corollary 1.5.2]{Ni}, the isometry $f$ extends to an isometry of $L$ if and only if there exists an isometry $f': K_1 \to K_2$ such that the diagram
\[
\xymatrix{
d(S_1)& \ar[l]_{\supset} H_{1,S} \ar[d]^{\bar{f}} \ar[r]^-{\gamma_1}_{\cong} & H_{1,K} \ar[d]^{\bar{f'}} \ar[r]^-{=}  & d(K_1) \\
d(S_2)& \ar[l]_{\supset} H_{2,S} \ar[r]^-{\gamma_2}_{\cong} & H_{2,K} \ar[r]^-{=}  &d(K_2).}
\]
commutes, where $\bar{f}$ and $\bar{f'}$ are induced by $f$ and $f'$ on the discriminant groups.
So, we consider the isometry $f': K_1 \to K_2$ sending $h_A$ to $\lambda_2$; notice that $f'$ acts trivially on the discriminant group. On the other hand, the isometry $f$ acts either as the identity on $\mathbb{Z}/2\mathbb{Z} \times \mathbb{Z}/2\mathbb{Z}$ or it exchanges the two factors. Assume we are in the first case. Then, it follows that $\bar{f'} \circ \gamma_1 ((0,1))=\gamma_2 \circ \bar{f}((0,1))$. 

In the second case, we change the marking $\phi$ with the marking $\phi': \KT \cong \tilde{\Lambda}(-1)$, such that $\phi'(\lambda_1)=f_2$ and $\phi'(\lambda_2)=f_1$. By the same argument explained above, we have $H_2=\langle (1,0,1) \rangle$ and $H_{2,S}=\langle (1,0) \rangle$. It follows that
$$\gamma_2 \circ \bar{f}((0,1))= \gamma_2((1,0))=H_{2,K}=\bar{f'} \circ \gamma_1 ((0,1)).$$
Then \cite[Corollary 1.5.2]{Ni} applies and we deduce that the isometry $f$ extends to an isometry of $L$. It follows that $g$ is well defined over $\Z$, which concludes the proof. 
\end{proof}

\begin{rmk}
\label{Memb2}
In the same way we can prove that there is a Hodge isometry $\langle \lambda_2 \rangle^{\perp} \cong H^2(\tilde{Y}_A,\Z)(1)$ which extends $f$ and sending $\lambda_1$ to $h_A$.
\end{rmk}

By Lemma \ref{mukaivsprim}, it follows that there is a primitive embedding 
$$H^2(\tilde{Y}_A,\Z) \hookrightarrow \KT(-1).$$ 
By \cite[Section 9]{Mark}, it is unique up to isometry of $\tilde{\Lambda}$. Thus it defines a \textbf{Markman embedding} as discussed in \cite[Section 1]{Add}. 
\begin{proof}[Proof of Theorem \ref{thm1}]
If $d$ satisfies $(\ast\ast)$, then $N(\A_X)$ contains a copy of the hyperbolic plane $U$ by Theorem \ref{thmK3U}. This proves that (a) implies (b). Recall that $T_X$ is Hodge isometric to $T_{\tilde{Y}_A}(-1)$ by Theorem \ref{periodEPW}. Then (b) is equivalent to (c) by Proposition 4 in \cite{Add}. 

Assume that $X$ is as in the second part of the statement. Then by Theorem \ref{thmK3U} $d$ satisfies $(\ast\ast)$ if and only if $U \subset N(\A_X)$. The statement follows applying \cite[Proposition 4]{Add} as before. 
\end{proof}

\begin{rmk}
As observed in \cite{Add} for cubic fourfolds, under the hypothesis of Theorem \ref{thm1}, $\tilde{Y}_A$ is birational to a moduli space of Bridgeland stable objects if and only if $d$ satisfies $(\ast\ast)$, by \cite[Theorem 1.2(c)]{BM}.
\end{rmk}

\begin{rmk}
As observed in \cite[Remark 5.29]{DK2}, the image of the closure of the locus of smooth GM fourfolds having singular associated double EPW sextic is precisely the divisor $\mathcal{D}_{10}''$. We claim that there exist Hodge-special GM fourfolds with smooth associated double EPW sextic. Indeed, by \cite[Section 7.2]{DIM} this is clear for general GM fourfolds containing a $\tau$-plane: their period points lie in the divisor $\mathcal{D}_{12}$ and they do not belong to $\mathcal{D}_{10}''$, because of generality assumption. Now, let $d$ be a positive integer $\equiv 0,2,4 \pmod 8$. Assume $d >12$ if $d \equiv 0 \pmod 4$, resp.\ $d\geq 10$ if $d\equiv 2 \pmod 8$. By \cite[Theorem 8.1]{DIM}, the image of the period map meets all divisors $\mathcal{D}_d$, $\mathcal{D}_d'$ and $\mathcal{D}_d''$ for the respective values of the discriminant. More precisely, for every $d$ as before, they construct a GM fourfold $X_0$ whose period point $p(X_0)$ belongs to the intersection of $\mathcal{D}_{10}''$ with $\mathcal{D}_d$ (resp.\ $\mathcal{D}_d'$ or $\mathcal{D}_d''$) if $d \equiv 0 \pmod 4$ (resp.\ $d \equiv 2 \pmod 8$). Consider the case $d \equiv 0 \pmod 4$. Since the period map is dominant (see Section 2.2), there exists an open dense subset $U$ of $\mathcal{D}$ contaning $p(X_0)$ such that $U \subseteq p(\mathcal{M}_4)$. Notice that $U \cap \mathcal{D}_d$ is open in $\mathcal{D}_d$ and it contains $p(X_0)$. Moreover, it is not contained in $\mathcal{D}_d \cap \mathcal{D}_{10}''$, because the latter has codimension $1$ in $\mathcal{D}_d$. It follows that $(U \cap \mathcal{D}_d)\setminus \mathcal{D}_{10}'' \neq \emptyset$. The same argument applies to the case $d \equiv 2 \pmod 8$ and it completes the proof of the claim.
\end{rmk}

\begin{rmk}
Assume that $X$ is a Hodge-special GM fourfold such that $\tilde{Y}_A$ is smooth. Notice that the period point defined by the Hodge structure on $K(\A_X)_{\text{top}}(-1)$ is of K3 type if and only if $\tilde{Y}_A$ is birational to a moduli space of stable sheaves on a K3 surface. 
\end{rmk}

As in \cite[Proposition 4.1]{Huy} in the case of cubic fourfolds, we can prove the twisted version of Theorem \ref{thm1}.

\begin{proof}[Proof of Theorem \ref{thm4}]
Assume that $\tilde{Y}_A$ is birational to a moduli space $M(v)$ of $\alpha$-twisted stable sheaves on a K3 surface $S$, where $v$ is primitive in $\tilde{H}^{1,1}(S,\alpha,\Z)$ and $(v,v)=2$. Using the embedding $H^2(\tilde{Y}_A,\Z) \hookrightarrow \KT(-1)$ and Torelli Theorem for hyperk\"ahler manifolds, this is equivalent to have an isometry of Hodge structures $\KT(-1) \cong \tilde{H}(S,\alpha,\Z)$, which restricts to 
$$H^2(\tilde{Y}_A,\Z) \cong H^2(M(v),\Z)\cong v^{\perp} \hookrightarrow \tilde{H}(S,\alpha,\Z).$$
Equivalently, by Theorem \ref{thm3intro}, $X$ is Hodge-special with a labelling of discriminant $d$ satisfying condition $(\ast\ast')$. This proves one direction.

On the other hand, assume that $p(X)$ belongs to a divisor with $d$ satisfying $(\ast\ast')$. Then the image $v$ of $\lambda_1$ through $H^2(\tilde{Y}_A,\Z) \hookrightarrow \KT(-1) \cong \tilde{H}(S,\alpha,\Z)$ is primitive. Since the induced moduli space $M(v)$ is non-empty and the Hodge isometry $H^2(\tilde{Y}_A,\Z)\cong v^{\perp} \cong H^2(M(v),\Z)$ extends to $\tilde{\Lambda}$, we conclude the desired statement. 
\end{proof}

\subsection{Proof of Theorem \ref{thm2}}

Firstly, we need the following lemma, which is analogous to \cite[Lemma 9]{Add} and that we will use also in Section 4.2.

\begin{lemma}
\label{lemmaformaeulfacile}
Let $X$ be a Hodge-special GM fourfold of discriminant $d$ such that $d \equiv 2\text{ or }4 \pmod 8$. Then there exists an element $\tilde{\tau}$ in $N(\A_X)$ such that $\langle \lambda_1,\lambda_2,\tilde{\tau} \rangle$ is a primitive sublattice of discriminant $d$ with Euler pairing given, respectively, by
\[
\begin{pmatrix}
-2 & 0 & 1 \\ 
0 & -2 & 0 \\ 
1 & 0 & 2k
\end{pmatrix} \quad \text{or} \quad 
\begin{pmatrix}
-2 & 0 & 0 \\ 
0 & -2 & 1 \\ 
0 & 1 & 2k
\end{pmatrix} \quad \text{with }d=2+8k,
\]
\[
\begin{pmatrix}
-2 & 0 & 1 \\ 
0 & -2 & 1 \\ 
1 & 1 & 2k
\end{pmatrix} \quad \text{with }d=4+8k.
\]
\end{lemma}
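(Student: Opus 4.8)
The plan is to start from the existence of a primitive rank-three sublattice $M_K$ of $N(\A_X)$ of discriminant $d$ containing $\langle \lambda_1,\lambda_2\rangle$, which is guaranteed by Corollary \ref{correticoli} since $X$ is Hodge-special of discriminant $d$. Pick any $\tau \in M_K$ so that $M_K = \langle \lambda_1,\lambda_2,\tau\rangle$, and write the Euler pairing in this basis as
\[
\begin{pmatrix}
-2 & 0 & p \\
0 & -2 & q \\
p & q & 2r
\end{pmatrix},
\]
noting that $\chi(\tau,\tau)$ is even since the Mukai lattice $\KT$ is even (it is isomorphic to $U^4\oplus E_8^2$). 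Computing the determinant gives $\operatorname{disc}(M_K) = -( -2)(-2)(2r) - (-2)p^2 - (-2)q^2 = -8r + 2p^2 + 2q^2 = d$ up to sign; since $d>0$ this reads $d = 2p^2 + 2q^2 - 8r$, so $d \equiv 2(p^2+q^2) \pmod 8$. The first step is then a small case analysis on the parities of $p,q$: since $M_K$ is primitive and contains $\langle\lambda_1,\lambda_2\rangle$ with determinant $4$, not both $p$ and $q$ can be even (otherwise $\tau$ would be divisible, or $M_K$ would fail primitivity — more precisely, if $p,q$ are both even then $d\equiv 0\pmod 8$, contradicting $d\equiv 2,4\pmod 8$). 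So either exactly one of $p,q$ is odd (forcing $p^2+q^2\equiv 1$, hence $d\equiv 2\pmod 8$), or both are odd (forcing $p^2+q^2\equiv 2$, hence $d\equiv 4\pmod 8$).

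The second step is to normalize $\tau$ by adding integer multiples of $\lambda_1,\lambda_2$ to reduce $p$ and $q$ modulo $2$ (this is legitimate because $\chi(\lambda_i,\lambda_i)=-2$, so $\chi(\tau + a\lambda_1 + b\lambda_2, \lambda_1) = p - 2a$ and similarly for $\lambda_2$, while the top-left $2\times 2$ block stays fixed). In the case $d\equiv 2\pmod 8$, after reduction we may assume $(p,q)=(1,0)$ or $(0,1)$; in the case $d\equiv 4\pmod 8$ we may assume $(p,q)=(1,1)$ (the other odd-odd residue classes are brought to $(1,1)$ by the same shifts, possibly after also replacing $\tau$ by $-\tau$ or swapping the roles, noting $\lambda_1,\lambda_2$ are symmetric). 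Adding such multiples changes $r$ but preserves the lattice $M_K$, hence its discriminant and primitivity; writing $\tilde\tau$ for the normalized class and $2k$ for its new self-pairing, the determinant formula then forces $d = 2 + 8k$ in the first case (both subcases) and $d = 4 + 8k$ in the second, which is exactly the claimed shape.

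The main obstacle, and the point requiring a little care, is checking that after the reduction of $p,q$ modulo $2$ the resulting $\tilde\tau$ still generates a \emph{primitive} sublattice together with $\lambda_1,\lambda_2$ — but this is automatic, since adding elements of $\langle\lambda_1,\lambda_2\rangle$ to $\tau$ does not change the sublattice $\langle\lambda_1,\lambda_2,\tau\rangle$ at all, and $M_K$ was primitive to begin with. A secondary subtlety is confirming that the sign conventions line up so that $d>0$ really yields $d = 2p^2+2q^2-8r$ with $r$ chosen so that $2k := 2r \geq$ whatever is needed; but since $k$ is merely defined by $d = 2+8k$ or $d=4+8k$ and $d$ is given, this is just reading off $k$ from $d$, and the self-pairing $\chi(\tilde\tau,\tilde\tau)$ is then forced to equal $2k$ by the determinant computation. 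So the whole argument is elementary lattice bookkeeping once Corollary \ref{correticoli} supplies the rank-three sublattice.
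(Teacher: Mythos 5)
Your proposal is correct and follows essentially the same route as the paper: invoke Corollary \ref{correticoli} to produce the primitive rank-three sublattice, use evenness of $N(\A_X)$ for the diagonal entry, and translate $\tau$ by elements of $\langle\lambda_1,\lambda_2\rangle$ to normalize the off-diagonal entries modulo $2$ without changing the sublattice. (Only a cosmetic slip: the determinant is $2p^2+2q^2+8r$, with $+8r$ rather than $-8r$, which is what makes $\chi(\tilde\tau,\tilde\tau)=2k$ match $d=2+8k$, resp.\ $d=4+8k$; your mod-$8$ congruence and the rest of the argument are unaffected.)
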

\begin{proof}
By Corollary \ref{correticoli}, there exists an element $\tau \in N(\A_X)$ such that $\langle \lambda_1,\lambda_2,\tau \rangle$ is a primitive sublattice of discriminant $d$ with Euler paring given by
\[
\begin{pmatrix}
-2 & 0 & a \\ 
0 & -2 & b \\ 
a & b & c
\end{pmatrix}.\]
Notice that $c$ is even, because $N(\A_X)$ is an even lattice. 

Assume that $d \equiv 2 \pmod 8$; then one of $a$ and $b$ is even and the other is odd. Assume that $b$ is even. Substituting $\tau$ with $\tau'=\tau +b/2 \lambda_2$, we get a new basis with Euler form
\[
\begin{pmatrix}
-2 & 0 & a \\ 
0 & -2 & 0 \\ 
a & 0 & c'
\end{pmatrix}.
\]
We can write $a=4d+e$ with $e=-1,1$. Then, the basis $\lambda_1,\lambda_2, \tilde{\tau}=\tau'+2d\lambda_1$ has Euler pairing
\[
\begin{pmatrix}
-2 & 0 & e \\ 
0 & -2 & 0 \\ 
e & 0 & 2k
\end{pmatrix}.
\]
If $e=-1$, we change $\tilde{\tau}$ with $-\tilde{\tau}$ and we return to the case $e=1$. If $a$ is even, by the same argument we obtain a basis with the second matrix in the statement. This proves the claim in the case $d \equiv 2 \pmod 8$. The case $d \equiv 4 \pmod 8$ works in the same way. 
\end{proof}

\begin{proof}[Proof of Theorem \ref{thm2}]
Assume that there exist a K3 surface $S$ and a birational equivalence $\tilde{Y}_A \dashrightarrow S^{[2]}$. By Lemma \ref{mukaivsprim} and \cite[Proposition 5]{Add}, this is equivalent to the existence of an element $w$ in $N(\A_X)$ such that the Euler pairing of $K:=\langle\lambda_1,\lambda_2,w \rangle$ has the form
\[
\begin{pmatrix}
-2 & 0 & 1 \\ 
0 & -2 & n \\ 
1 & n & 0
\end{pmatrix} \quad \text{for some }n \in \Z.
\]
In particular, the discriminant of $K$ is $2n^2+2$. Let $M_K$ be the saturation of $K$; if $a$ is the index of $K$ in $M_K$ and $d$ is the discriminant of $M_K$, we have $\text{discr}(K)=a^2d$, as we wanted.

Conversely, assume that $d$ satisfies condition \eqref{ast3}. Then there exist integers $n$ and $a$ such that $a^2d=2n^2+2$. Firstly, we observe that $2n^2+2$ satisfies $(\ast\ast)$. Indeed, every odd prime $p$ dividing $n^2+1$ is $\equiv 1 \pmod 4$, and $8 \nmid  2n^2+2$. It follows that $a$ is the product of odd primes $\equiv 1 \pmod 4$; in particular, $a \equiv 1 \pmod 4$. 

Suppose firstly that $d  \equiv 2 \pmod 8$; then $n$ is even. Indeed, assume that $n \equiv 1 \pmod 4$ (resp.\ $n \equiv 3 \pmod 4$). It follows that $n^2+1 \equiv 2 \pmod 4$; then $d \equiv 4 \pmod 8$, which is impossible. Furthermore, by Lemma \ref{lemmaformaeulfacile}, there is an element $\tau \in N(\A_X)$ such that the sublattice $\langle \lambda_1,\lambda_2,\tau \rangle$ has Euler pairing of the form
\[
\begin{pmatrix}
-2 & 0 & 1 \\ 
0 & -2 & 0 \\ 
1 & 0 & 2k
\end{pmatrix}
\quad \text{or}\quad
\begin{pmatrix}
-2 & 0 & 0 \\ 
0 & -2 & 1 \\ 
0 & 1 & 2k
\end{pmatrix}.
\] 

Assume that we are in the first case. We set
$$w:=\frac{a-1}{2} \lambda_1 + \frac{n}{2} \lambda_2 + a\tau \in N(\A_X),$$
where $n/2$ is an integer, because $n$ is even. An easy computation shows that 
$$\chi(\lambda_1,w)=1 \quad \text{and} \quad \chi(w)=0.$$
By \cite[Proposition 5]{Add}, it follows that $\tilde{Y}_A$ is birational to $S^{[2]}$ for a K3 surface $S$. 

If we are in the second case, we consider the Markman embedding $H^2(\tilde{Y}_A,\Z) \subset \KT(-1)$ defined by the Hodge isometry $\langle \lambda_2 \rangle^{\perp} \cong H^2(\tilde{Y}_A,\Z)(1)$ (see Remark \ref{Memb2}). Setting
$$w:=\frac{n}{2} \lambda_1 + \frac{a-1}{2} \lambda_2 + a\tau \in N(\A_X),$$
the proof follows from \cite[Proposition 5]{Add}.

Now assume that $d \equiv 4 \pmod 8$; then $n$ is odd. Indeed, if $n \equiv 0 \pmod 4$ (resp.\ $n \equiv 2 \pmod 4$), then $n^2+1 \equiv 1 \pmod 4$. Since $a^2d/2=n^2+1$ and $a \equiv 1 \pmod 4$, we conclude that $d/2 \equiv 1 \pmod 4$, which is impossible. By Lemma \ref{lemmaformaeulfacile}, there is an element $\tau \in N(\A_X)$ such that the sublattice $\langle \lambda_1,\lambda_2,\tau \rangle$ has Euler pairing of the form
\[
\begin{pmatrix}
-2 & 0 & 1 \\ 
0 & -2 & 1 \\ 
1 & 1 & 2k
\end{pmatrix} \text{ with }d=4+8k.
\]
We set
$$w:=\frac{a-1}{2} \lambda_1 + \frac{a-n}{2} \lambda_2 + a\tau \in N(\A_X).$$
Notice that $(a-n)/2$ is integral, because $n$ is odd. Arguing as before, we conclude the proof of the result.
\end{proof}

\begin{rmk}
\label{d=50}
As seen in the proof of Theorem \ref{thm2}, condition \eqref{ast3} implies condition $(\ast\ast)$. On the other hand, condition \eqref{ast3} is stricter than condition $(\ast\ast)$. For example, $d=50$ satisfies $(\ast\ast)$, but not \eqref{ast3}.
\end{rmk}

\begin{rmk}
\label{linkIM}
In \cite[Proposition 2.1]{IM}, they proved that if a smooth double EPW sextic is birational to the Hilbert scheme $S^{[2]}$ on a K3 surface $S$ with polarization of the degree-$d$, then the negative Pell equation
$$\mathcal{P}_{d/2}(-1): n^2-\frac{d}{2}a^2=-1$$
is solvable in $\Z$. This condition is actually condition \eqref{ast3} in the case of the double EPW associated to a GM fourfold. Notice also that the K3 surface $S$, realizing the birational equivalence between $\tilde{Y}_A$ and $S^{[2]}$ in Theorem \ref{thm2}, has a pseudo-polarization of degree-$d$. Indeed, the hypothesis implies that there is a rank-three sublattice $M_K$ of $N(\A_X)$. Moreover, it contains a copy of the hyperbolic plane and $H^2(S,\Z) \cong U^{\perp} \subset N(\A_X)$, as explained in the proof of \cite[Proposition 5]{Add}. Then, the generator of $U^{\perp} \cap M_K$ has degree-$d$, as we wanted. Moreover, if $p(X) \notin \mathcal{D}_8$, then there are no classes of square $2$ in $H^4(X,\Z)_{00} \cap H^{2,2}(X,\Z)$. In this case, the pseudo-polarization is a polarization class on $S$. 


\end{rmk}

Dipartimento di Matematica ``F.\ Enriques'', Universit\`a degli Studi di Milano, Via Cesare Saldini 50, 20133 Milano, Italy \\
\indent E-mail address: \texttt{laura.pertusi@unimi.it}\\
\indent URL: \texttt{http://www.mat.unimi.it/users/pertusi}


\begin{thebibliography}{20}

\bibitem{AHTV}
N.\ Addington, B.\ Hassett, Y.\ Tschinkel, A. Várilly-Alvarado,
\emph{Cubic fourfolds fibered in sextic del Pezzo surfaces}, arXiv:1606.05321.

\bibitem{AT}
N.\ Addington, R.\ P.\ Thomas,
\emph{Hodge theory and derived categories of cubic fourfolds}, Duke Math. J. \textbf{163}, no.10 (2014), 1885-1927.

\bibitem{Add}
N.\ Addington,
\emph{On two rationality conjectures for cubic fourfolds}, Math.\ Res.\ Lett.\ \textbf{23}, no.1 (2016), 1-13.

\bibitem{AH}
M.\ F.\ Atiyah, F.\ Hirzebruch,
\emph{Vector bundles and homogeneous spaces}, Proc.\ Sympos.\ Pure Math.\, Vol.\ \textbf{III} (1961), 7-38.

\bibitem{Baker}
A.\ Baker,
\emph{A concise introduction to the theory of numbers}, Cambridge University Press, Cambridge, 1984.

\bibitem{BLMS}
A.\ Bayer, M.\ Lahoz, E.\ Macrì, P.\ Stellari, 
\emph{Stability conditions on Kuznetsov components}, (Appendix joint also with X.\ Zhao), arXiv:1703.10839.

\bibitem{BLMSpre}
A.\ Bayer, M.\ Lahoz, E.\ Macrì, A.\ Nuer, A.\ Perry, P.\ Stellari,
\emph{Families of stability conditions}, in preparation.

\bibitem{BM}
A.\ Bayer, E.\ Macrì,
\emph{MMP for moduli of sheaves on K3s via wall-crossing: nef and movable cones, Lagrangian fibrations}, Invent.\ Math.\ \textbf{198}, no.3 (2014), 505–590.

\bibitem{Beau}
A.\ Beauville,
\emph{Variétés kählériennes dont la première classe de Chern est nulle}, J.\ of Diff.\ Geometry \textbf{18} (1983), 755-782. 

\bibitem{CM}
A.\ Conte, J.\ P.\ Murre,
\emph{The Hodge conjecture for fourfolds admitting a covering by rational curves}, Math.\ Ann.\ \textbf{238}, no.\ 1 (1978), 79–88.

\bibitem{Cox}
D.\ A.\ Cox,
\emph{Primes of the form $x^2+ny^2$}, A Wiley-Interscience Publication John Wiley and Sons Inc.\, New York (1989), Fermat, class field theory and complex multiplication.

\bibitem{DIM}
O.\ Debarre, A.\ Iliev, L.\ Manivel,
\emph{Special prime Fano fourfolds of degree 10 and index 2}, Recent advances in algebraic geometry, London Math.\ Soc.\ Lecture Note Ser.\ \textbf{417}, Cambridge Univ.\ Press (2015), 123–155.

\bibitem{DK1}
O.\ Debarre, A.\ Kuznetsov,
\emph{Gushel–Mukai varieties: classification and birationalities}, 23 March 2016, 60 pages, arXiv:1510.05448.

\bibitem{DK2}
O.\ Debarre, A.\ Kuznetsov,
\emph{Gushel–Mukai varieties: linear spaces and periods}, 6 June 2017, 44 pages, arXiv:1605.05648v3.

\bibitem{DM}
O.\ Debarre, E.\ Macrì,
\emph{On the period map for polarized hyperk\"{a}hler fourfolds}, 20 Dec 2017, 27 pages, arXiv:1704.01439v2.

\bibitem{Dru}
S.\ Druel,
\emph{Espaces des modules des faisceaux semi-stables de rang 2 et de classes de Chern $c_1 = 0$, $c_2 = 2$ et $c_3 = 0$ sur une hypersurface cubique lisse de $\p^4$}, IMRN \textbf{19} (2000), 985-1004.

\bibitem{Huybk}
M.\ Gross, D.\ Huybrechts, D.\ Joyce,
\emph{Calabi-yau manifolds and Related Geometries}, Lectures at a Summer School in Nordfjordeid, Norway, June 2001, Universitext, Springer Berlin Heidelberg.

\bibitem{HT}
B.\ Hassett, Y.\ Tschinkel, 
\emph{Moving and ample cones of holomorphic symplectic fourfolds}, Geom.\ Funct.\ Anal.\ \textbf{19} (2009), no.\ 4, 1065–1080.

\bibitem{Huy}
D.\ Huybrechts,
\emph{The K3 category of a cubic fourfold}, 39 pages, 10 June 2015, arXiv:1505.01775v2, \emph{to appear in Compositio}.

\bibitem{IM}
A.\ Iliev, C.\ Madonna,
\emph{EPW sextics and Hilbert square of K3 surfaces}, Serdica Math.\ Journal \textbf{41}, no.4 (2015), 343-354.

\bibitem{IMani}
A.\ Iliev, L.\ Manivel, 
\emph{Fano manifolds of degree $10$ and EPW sextics}, Ann.\ Sci.\ \'Ecole Norm.\ Sup.\ \textbf{44} (2011), 393-426.

\bibitem{KP}
A.\ Kuznetsov, A.\ Perry,
\emph{Derived categories of Gushel-Mukai varieties}, 25 Oct 2017, 43 pages, arXiv:1605.06568v2.

\bibitem{Mark}
E.\ Markman,
\emph{A survey of Torelli and monodromy results for holomorphic-symplectic varieties}, Complex and differential geometry, Springer Proc.\ Math. \textbf{8}, Springer, Heidelberg (2011), 257–322.

\bibitem{MT}
D.\ Markushevich, A.\ Tikhomirov,
\emph{The Abel-Jacobi map of a moduli component of vector bundles on the cubic threefold}, J.\ Algebraic Geometry \textbf{10} (2001), 37-62.

\bibitem{Morr}
D.R.\ Morrison, 
\emph{On K3 surfaces with large Picard number}, Invent.\ Math.\ \textbf{75} (1984), 105–121. 

\bibitem{Ni}
V.V.\ Nikulin, 
\emph{Integral symmetric bilinear forms and some of their applications}, Math.\ USSR Izvestija \textbf{14} (1980),
103–167.

\bibitem{NZ}
I.\ Niven, H.\ Zuckerman,
\emph{An Introduction to the Theory of Numbers}, 4th edition, Wiley (1980).

\bibitem{OG2}
K. O'Grady,
\emph{Irreducible symplectic $4$-folds and Eisenbud-Popescu-Walter sextics}, Duke Math.\ J.\ \textbf{134} (2006), 99-137.

\bibitem{OG3}
K.\ O'Grady,
\emph{Periods of double EPW-sextics}, Math.\ Z.\ \textbf{280} (2015), no.\ 1-2, 485–524. 

\bibitem{P}
Y.\ Prokhorov,
\emph{Rationality constructions of some Fano fourfolds of index $2$}, Moscow University Mathematics Bulletin \textbf{48} (1993), 32-35. 

\bibitem{R}
L.\ Roth,
\emph{Algebraic varieties with canonical curve sections}, Ann.\ Mat.\ Pura Appl.\ (4) \textbf{29} (1949), 91-97.

\bibitem{Voi}
C.\ Voisin,
\emph{Some aspects of the Hodge conjecture}, Jpn.\ J.\ Math.\ \textbf{2} (2007), no.\ 2, 261–296.
Also www.cmls.polytechnique.fr/perso/voisin/Articlesweb/takagifinal.pdf.

\bibitem{Voilibro}
C.\ Voisin,
\emph{Hodge theory and complex algebraic geometry. II}, volume 77 of Cambridge Studies in Advanced Mathematics. Cambridge
University Press, Cambridge, english edition, 2007. Translated from the French by Leila Schneps.

\end{thebibliography}
\end{document}